\DeclareRobustCommand{\qed}{%
  \ifmmode 
  \else \leavevmode\unskip\penalty9999 \hbox{}\nobreak\hfill
  \fi
  \quad\hbox{\qedsymbol}}
\newcommand{\openbox}{\leavevmode
  \hbox to.77778em{%
  \hfil\vrule
  \vbox to.675em{\hrule width.6em\vfil\hrule}%
  \vrule\hfil}}
\newcommand{\qedsymbol}{\openbox}
\newenvironment{proof}[1][\proofname]{\par
  \normalfont
  \topsep6\p@\@plus6\p@ \trivlist
  \item[\hskip\labelsep\itshape
    #1.]\ignorespaces
}{%
  \qed\endtrivlist
}
\newcommand{\proofname}{Proof}
\numberwithin{equation}{section}
\def \trans{^{\scriptscriptstyle{\intercal}}}
\def \Inf{\displaystyle\inf}
\def \Max{\displaystyle\max}
\def \b1{\bf{1}}
\def \I{\mathbb{I}}
\def \R{\mathbb{R}}
\def \E{\mathbb{E}}
\def \F{\mathbb{F}}
\def \P{\mathbb{P}}
\def \bd{\mathbb{d}}
\def \bd{\boldsymbol{d}}
\def \mra{\mathrm{a}}
\def \d{\mathrm{d}}
\def\esssup_#1{\underset{#1}{\mathrm{ess\,sup\, }}}
\def\argmin_#1{\underset{#1}{\mathrm{argmin\, }}}
\def\argmax_#1{\underset{#1}{\mathrm{argmax\, }}}
\def\dmu{\frac{\delta}{\delta m}}
\def\dm#1{\frac{\delta}{\delta m}}
\def \Ac{{\cal A}}
\def \Bc{{\cal B}}
\def \Fc{{\cal F}}
\def \Hc{{\cal H}}
\def \Ic{{\cal I}}
\def \Lc{{\cal L}}
\def \Pc{{\cal P}}
\def \Nc{{\cal N}}
\def \Sc{{\cal S}}
\def \Uc{{\cal U}}
\def \Wc{{\cal W}}
\def \eps{\varepsilon}
\def \ep{\hbox{ }\hfill$\Box$}
\def\reff#1{{\rm(\ref{#1})}}
\def\bd{{\bf d}}
\def \mra{\mathrm{a}}
\def \d{\mathrm{d}}
\def\beqs{\begin{eqnarray*}}
\def\enqs{\end{eqnarray*}}
\def\beq{\begin{eqnarray}}
\def\enq{\end{eqnarray}}
\newcommand{\red}[1]{\textcolor{red}{#1}}
\def\red#1{{\color{red}#1}}
\newtheorem{Theorem}{Theorem}[section]
\newtheorem{Definition}{Definition}[section]
\newtheorem{Proposition}{Proposition}[section]
\newtheorem{Assumption}{Assumption}[section]
\newtheorem{Lemma}{Lemma}[section]
\newtheorem{Corollary}{Corollary}[section]
\newtheorem{Remark}{Remark}[section]
\newtheorem{Example}{Example}[section]
\numberwithin{equation}{section}
\title{
Mean-field control of non exchangeable systems}
\author{Anna De Crescenzo\footnote{LPSM, Universit\'e Paris Cit\'e and Sorbonne University, \sf decrescenzo at lpsm.paris} 
\and Marco Fuhrman \footnote{Dipartimento di Matematica, Università degli Studi di Milano, {\sf marco.fuhrman at unimi.it} This author was a member of  INdAM-GNAMPA when this work was conceived.} 
\and  Idris Kharroubi \footnote{LPSM, Sorbonne University and  Universit\'e Paris Cit\'e, \sf kharroubii at lpsm.paris. 
The work of this author is  partially supported by Agence Nationale de la Recherche (ReLISCoP grant ANR-21-CE40-0001)  } 
\and 
Huy\^en Pham\footnote{LPSM, Universit\'e Paris Cit\'e and Sorbonne University, \sf pham at lpsm.paris. 
The work of this author is  partially supported by the BNP-PAR Chair ``Futures of Quantitative Finance", and the 
Chair Finance \& Sustainable Development / the FiME Lab (Institut Europlace de Finance)}}
\date{}
\begin{document}

\maketitle

\begin{abstract}
We study the optimal control of mean-field systems with heterogeneous and asymmetric interactions. This leads to considering a family of controlled Brownian diffusion processes with dynamics depending on the whole collection of marginal probability laws. We prove the well-posedness of such systems and define the control problem together with its related value function. We next prove a law invariance property for the value function which allows us  to work on the set of collections of probability laws. We show that the value function satisfies a dynamic programming principle (DPP) on the flow of collections of probability measures. We also derive a chain rule for a class of regular functions along the flows  of collections of marginal laws of diffusion processes. Combining the DPP and the chain rule, we prove that the value function is a viscosity solution of a Bellman dynamic programming equation in a 
$L^2$-set of Wasserstein space-valued functions.  
\end{abstract}



\vspace{5mm}

\noindent {\bf MSC Classification}: 60H30; 05C80; 60K35; 93E20.

\vspace{5mm}

\noindent {\bf Key words}: Heterogeneous interaction; graphons;  continuum of players; mean-field control; Wasserstein space; Bellman equation; viscosity solutions. 

\newpage

\section{Introduction}

The study of large population and complex systems is a major question in mathematical modelling  with various applications in our society like e.g.  social networks, power grid networks,  financial markets, lightning networks. 
Classical models consider mean-field systems with symmetric particles and homogenous interaction: denoting by $X^{i,N}$ the state of the $i$-th particle (agent/player) in the $N$-population,  it interacts with the other 
particles via the empirical measure: $\mu^N$ $=$ $\frac{1}{N} \sum_{j=1}^N \delta_{X^{j,N}}$, and therefore the system of $N$-particle $X^N$ $=$  $(X^{1,N},\ldots,X^{N,N})$ is exchangeable.  The macroscopic behavior of the  limiting mean-field system when the number 
$N$ of agents goes to infinity leads to an equation with a representative state depending on its probability distribution, called McKean-Vlasov (MKV) equation, and has been extensively studied in the context of mean-field game  (MFG where agents are in strategic interaction), 
of mean-field control (MFC with cooperative interaction among agents following a center of decision). We refer 
to the lectures of P.L. Lions \cite{liocolfra} at Coll\`ege de France, and to the monographs by  Bensoussan, Frehse and Yam \cite{benetal13} and Carmona and Delarue \cite{cardel18a}, \cite{cardel18b} (and the references therein) for 
a comprehensive treatment of the mathematical tools (It\^o's formula along flow of probability measures, maximum principle, forward backward SDE of MKV type, dynamic programming, Master Bellman equation in the Wasserstein space) used in the optimization 
problems  from  MFG and MFC.

In this paper, motivated by more realistic applications in complex networks,  we deal  with large systems of agents whose interactions are not necessarily symmetric and possibly heterogeneous,   
hence leading to non exchangeable systems as in \cite{jabinetal21}.  For example, the theory of graphons (see \cite{lovasz_large_2012})  provides a framework for such modelling, and has been used  in  \cite{caihua20}, \cite{auretal22}, \cite{lacker_label-state_2023},  \cite{berrak24}  for the MFG problem with heterogeneous agents. 
In the context of cooperative interaction,  the controlled dynamics of the particle $i$ $\in$ $\llbracket 1,N\rrbracket$  in  a graphon system  with heterogeneous drift and volatility coefficients  is driven by
\begin{align}
\d X_t^{i,N} &=  b\big(u_i,X_t^{i,N}, \alpha_t^{i,N},\frac{1}{N_i}\sum_{j=1}^N  G(u_i,u_j) \delta_{X_t^{j,N}} \big) \d t \\
& \quad \quad + \;   \sigma\big(u_i,X_t^{i,N}, \alpha_t^{i,N},\frac{1}{N_i}\sum_{j=1}^N  G(u_i,u_j) \delta_{X_t^{j,N}} \big) \d W_t^{u_i}, 
\end{align}
where $u_i$ $=$ $i/N$ $\in$ $U$ $:=$ $[0,1]$  is the label of particle $i$ in the $N$-system, $G$ is a graphon, i.e., a measurable function from $U\times U$ into $U$, measuring the weight of interaction between particles, $N_i$ $=$ $\sum_{j=1}^N G(u_i,u_j)$ is the degree of interaction of particle $i$, 
$W$ $=$ $(W^u)_{u\in U}$,  is a family of i.i.d. Brownian motion on some probability space $(\Omega,\Fc,\P)$,  and $\alpha^{i,N}$ is  a control process valued in some action space $A$, 
followed by agent $i$.   The aim of a center of decision in such a framework  is to 
minimize over  $(\alpha^{1,N},\ldots,\alpha^{N,N})$ a cost functional in the form 
\begin{align}
\frac{1}{R_N} \sum_{i=1}^N  r(u_i)  \E \Big[ \int_0^T f\big(u_i,X_t^{i,N},\alpha_t^{i,N},\frac{1}{N_i}\sum_{j=1}^N  G(u_i,u_j) \delta_{X_t^{j,N}} \big) \d t \\
\quad + \;  g\big(u_i,X_T^{i,N},\frac{1}{N_i}\sum_{j=1}^N  G(u_i,u_j) \delta_{X_T^{j,N}} \big) \Big],  
\end{align} 
where $r(u_i)$ is the weight of particle $i$ in the social cost criterion, $R_N$ $:=$ $\sum_{i=1}^N r(u_i)$, and 
$f,g$ are running/terminal costs, possibly heterogeneous (i.e. depending on the label of particles). 

When the number of agents $N$ goes to infinity, and in line with the convergence results in \cite{lac17}, \cite{dje22},  \cite{caretal23}, \cite{dauetal23} for MFC with homogeneous interaction corresponding to  $G$ $\equiv$ $1$, and propagation of chaos for graphon mean-field systems in \cite{bayetal23},  \cite{copdecpha24}, we formally expect to obtain the formulation of a graphon MFC as the problem of minimizing over 
a collection of control process $\alpha$ $=$ $(\alpha^u)_{u \in U}$ valued in $A^U$ a cost functional in the form 
 \begin{align} \label{gain-Graphon}
\int_U  \E \Big[& \int_0^T f\big(u,X_t^u,\alpha_t^u,    \int_0^1 \frac{G(u,v)}{\|G(u,.)\|_{_1}}  \P_{ X_t^{v}} \d v \big) \d t \\
& \qquad  + \;  g\big(u,X_T^u,  \int_0^1 \frac{G(u,v)}{\|G(u,.)\|_{_1}}  \P_{ X_T^{v}} \d v \big) \Big] r(u) \d u, 
 \end{align} 
where $\|G(u,.)\|_{_1}$ $:=$ $\int_0^1G(u,v) \d v$, $\P_Y$ denotes the probability law of a random variable $Y$ on $(\Omega,\Fc,\P)$, and $X$ $=$ $(X^u)_{u\in U}$  is a collection of controlled state process  in $\R^d$ governed by 
\begin{align} \label{X-graphon}
\d X_t^u &= \;  b\big(u,X_t^u,\alpha_t^u,  \int_0^1 \frac{G(u,v)}{\|G(u,.)\|_{_1}}  \P_{ X_t^{v}} \d v \big) \d t  \\
& \quad \quad + \;  \sigma\big(u,X_t^u,\alpha_t^u,  \int_0^1 \frac{G(u,v)}{\|G(u,.)\|_{_1}}  \P_{ X_t^{v}} \d v \big) \d W_t^u.  \end{align}

\paragraph{Our work and contributions.} Inspired by the above discussion, we extend the graphon MFC formulation in \eqref{gain-Graphon}-\eqref{X-graphon}, and introduce a class of mean-field control for non exchangeable systems 
by considering a collection of controlled state process $X$ $=$ $(X^u)_{u\in U}$ governed  by 
\begin{align} \label{dynXu}
\d X_t^u  &= \; b\big(u,X_t^u,\alpha_t^u, (\P_{X_t^v})_{_v},(\P_{\alpha_t^v})_{_v} \big) \d t +  \sigma\big(u,X_t^u,\alpha_t^u, (\P_{X_t^v})_{_v},(\P_{\alpha_t^v})_{_v} \big) \d W_t^u.
\end{align}
Then,  the MFC problem is to minimize over the collection of control process  $(\alpha^u)_{u\in U}$ the cost functional over a finite horizon 
\begin{align} \label{defJ}
J(\alpha) &= \; \int_U \E \Big[ \int_0^T f\big(u,X_t^u,\alpha_t^u, (\P_{X_t^v})_{_v},(\P_{\alpha_t^v})_{_v} \big) \d t  + g\big(u,X_T^u, (\P_{X_T^v})_{_v}\big)  \Big]     \lambda(\d u),
\end{align}
where $\lambda$ is a positive finite measure on $U$ that specifies the weight of the agents/particles  in the social cost criterion. In this general modeling, we see from \eqref{dynXu} that the state processes of the agents $u$ $\in$ $U$ are independent, but not identically distributed, and they interact through the whole collection of their probability laws on the state $(\P_{X_t^v})_{_{v\in U}}$ and control $(\P_{\alpha_t^v})_{_{v\in U}}$. Notice also the measurability issues $(u,\omega)$ $\in$ $U\times\Omega$ $\mapsto$ $X^u(\omega)$ due to the independence of the continuum of state process, see \cite{sun06}. In particular, this probabilistic independence property prevents from proving a measurability property with respect to the product of $\sigma$-algebrae on $U$ and $\Omega$. As a consequence, we are not able to do computations and derive estimates involving the variables $u\in U$ and $\omega\in \Omega$ on the state process anymore. This issue leads us to focus on the collection of laws $(\P_{X_t^v})_{_{v\in U}}$ as a state variable.

Our first task is a rigorous formulation of the MFC for coupled SDEs $(X^u)_{u\in U}$ with the admissible set of controls $\alpha$ $=$ $(\alpha^u)_{u\in U}$, and we precise in the next section the assumptions on the coefficients $b,\sigma$, $f,g$  for ensuring the existence and uniqueness of a solution to \eqref{dynXu} given initial conditions, and the well-posedness of the cost functional in \eqref{defJ}.  
Our main goal is to provide an analytic characterization of the solution to this novel class of control problems by adopting a dynamic programming approach. This will be achieved through the following steps. 

We define the value function associated to our MFC problem, and by a law invariance property, it is a 
function defined on $[0,T]\times L_\lambda^2(\Pc_2(\R^d))$, where $L_\lambda^2(\Pc_2(\R^d))$ is the collection 
$(\mu^u)_{u\in U}$ of square integrable probability measures on $\R^d$ s.t. 
$\int_U \int_{\R^d} |x|^2 \mu^u(\d x) \lambda(\d u)$ $<$ $\infty$. From the flow property of the solution to \eqref{dynXu}, we then state directly the dynamic programming principle (DPP) for the value function. 

Next, we aim to derive an It\^o's formula for the collection of flow of probability measures 
$(\P_{X_t^u})_{t\in [0,T],u\in U}$ that extends the It\^o formula for flow of measures  in 
\cite{buckdahn_mean-field_2017}, see also Chapter 5 in \cite{cardel18a}. This is obtained with the notion of 
linear (or flat)  derivative of a function defined on $L_\lambda^2(\Pc_2(\R^d))$, and by standard method of time discretization.  

Once we have the DPP and the It\^o's formula, we can derive as usual the associated Bellman equation that we express  in a suitable and unified form that takes into consideration both dependence of the dynamics and reward on the collection of probability laws on state/control. A verification theorem is shown for classical solutions to the Bellman equation, and in general, we prove the (discontinuous) viscosity property of the value function to the Bellman equation. Uniqueness of viscosity solutions is beyond the scope of this paper and postponed for future research.

\paragraph{Outline.} The plan of this paper is organized as follows.  We formulate the mean-field control problem for non exchangeable systems in Section \ref{secMFC}. In Section \ref{secDPP}, we show the law invariance property of the value function and the dynamic programming principle. Section \ref{secIto} is devoted to It\^o formula along a collection of flow of probability measures with the notion of linear derivative on  $L_\lambda^2(\Pc_2(\R^d))$. We then derive in Section \ref{secHJB} the dynamic programming Bellman equation for the value function, and its viscosity property. Finally, we postpone some proofs to  Appendix \ref{secappenthmexuuniq}, and  collect in Appendix \ref{secappen} 
some auxiliary results dealing with measurability questions that are needed in the proofs of some results. 
 
\paragraph{Notations.}
\begin{itemize}
\item We denote by $x\cdot y$ the scalar product between vectors $x,y$, and by $A:B$ $=$ ${\rm tr}(A B\trans)$ the inner product of two matrices $A,B$ with compatible dimensions, where $B\trans$ is the transpose matrix of $B$.
\item  Throughout the paper, $T>0$ denotes a fixed time horizon. For and integer $d\geq1$ and $[a,b]\subset [0,T]$ the space $C([a,b];\R^d)$ of continuous functions $[a,b]\to \R^d$ will be denoted simply $C_{[a,b]}^d$. It will be given the supremum norm and the corresponding distance and Borel sets. For $w\in C^d_{[a,b]}$ and $[a',b']\subset [a,b]$,  $w_{[a',b']}\in C^d_{[a',b']}$ stands for the restriction of $w$ to $[a',b']$.    
We also denote by
$\mathbb{W}_T$ the Wiener measure on $C^d_{[0,T]}$.
\item  For $a,b,c\in[0,T]$ such that $a\leq b\leq c$, $\hat{w}\in C^d_{[a,b]}$ and $\check{w}\in C^d_{[b,c]}$  we define the concatenation $\hat{w}\oplus \check{w} \in C^\ell_{[a,c]}$ by the formula
\begin{equation}\label{DefConc}
(\hat{w}\oplus \check{w})(s)=\left\{ \begin{array}{ll}
\hat{w}(s)& \text{ if } s\in [a,b],
\\
\hat{w}(b)+\check{w}(s)-\check{w}(b)& \text{ if } s\in [b,c].
\end{array}\right.
\end{equation}

\item  For any generic Polish space $E$ with a complete metric $d$,
we denote by $\Pc_2(E)$ the Wasserstein space of Borel probability measures $\rho$ on $E$ satisfying 
$\int_E d(x,0)^2\,\rho(\d x)<\infty$, 
where $0$ denotes an arbitrary fixed element in $E$ (the origin when $E$ is a vector space).  $\Pc_2(E)$ is
endowed with the $2$-Wasserstein distance $\Wc_2$ corresponding to the quadratic transport cost $(x,y)\mapsto d(x,y)^2$. As a measurable space, $\Pc_2(E)$ is
endowed with the corresponding Borel $\sigma$-algebra.  
\item Given a random variable $Y$ on a probability space $(\Omega,\Fc,\P)$, we denote by $\P_{Y}$ the law of $Y$ under $\P$. We shall also use the notation  $\Lc(Y)$ for the law of $Y$ (under $\P$) when there is no ambiguity.

\end{itemize}


\section{Controlled mean field non exchangeable system and the optimi\-zation problem
} \label{secMFC}

\subsection{Preliminaries}

Let $U$ be a Polish space   endowed with its   Borel $\sigma$-algebra $\Uc$, representing the continuum of heterogenous agents/particles, and $\lambda$ be  a positive  finite  measure on $U$. 
In particular, when $\lambda$ is a discrete measure, this models the case of a finite number of class of heterogenous interacting agents. 

We will often consider maps $u\mapsto \mu^u$ from $U$ to $\Pc_2(E)$. When we need 
to check measurability of such maps we will use the fact that the Borel $\sigma$-algebra   in $\Pc_2(E)$ coincides with the trace of the Borel $\sigma$-algebra corresponding to the weak topology, and that measurability holds if and only if the maps of the form 
\begin{align} \label{meascriterion}
    u \in U & \mapsto \;  \int_E\Phi(x)\,\mu^u(\d x) \in \R, 
\end{align}
are measurable for every choice of bounded continuous function $\Phi:E\to \R$.

The space $L^2(U,\Uc,\lambda;\Pc_2(E))$, denoted $L^2_\lambda(\Pc_2(E))$ for short, consists of  elements $\mu=(\mu^u)_{u\in U}$ that are  measurable functions $U\to \Pc_2(E)$, $u\mapsto \mu^u$, satisfying 
\begin{align} 
\int_U\int_{\R^d}d(x,0)^2\,\mu^u(\d x) \,\lambda(\d u) &= \; \int_U\Wc_2(\mu^u,\delta_0)^2\,\lambda(\d u)
<\infty,
\end{align} 
where $\delta_0$ denotes the Dirac mass at the fixed element $0$. 
$L^2_\lambda(\Pc_2(E))$ is  endowed with the (complete) metric
\begin{align} 
\bd (\mu,\nu) & = \; \left( \int_U\Wc_2(\mu^u,\nu^u)^2\,\lambda(\d u)\right)^{1/2},
\qquad \mu,\nu\in L^2_\lambda(\Pc_2(E)),
\end{align} 
and the corresponding Borel $\sigma$-algebra (when we deal with measurability issues).


We will also often deal with the case when $E$ is a space of continuous functions. For instance when $E=C_{[a,b]}^d$, for any  $\mu=(\mu^u)_u\in L^2_\lambda(\Pc_2(C_{[a,b]}^d))$ and $s\in[a,b]$, we may consider $\mu_s:=(\mu_s^u)_u$, where each $\mu^u_s$ is the image  of the measure $\mu^u$ under the coordinate mapping $C_{[a,b]}\to \R^d$, $w\mapsto w(s)$. It is then easy to see that $\mu_s^u\in\Pc_2(\R^d)$, $\mu_s\in L^2_\lambda(\Pc_2(\R^d))$,   $s\mapsto \mu_s$  is continuous $[a,b]\to L^2_\lambda(\Pc_2(\R^d))$, $u\mapsto \mu_s^u$ is measurable $U\to \Pc_2(\R^d)$,
$(u,s)\mapsto \mu_s^u$ is also measurable $U\times [a,b]\to \Pc_2(\R^d)$ and finally, for any other  
$\nu=(\nu^u)_u\in L^2_\lambda(\Pc_2(C_{[a,b]}^d))$,
$$
\Wc_2 (\mu^u_s,\nu^u_s)\le 
\Wc_2 (\mu^u,\nu^u),
\qquad
\bd (\mu_s,\nu_s)\le 
\bd (\mu ,\nu),
$$
for every $u\in U$, $s\in [a,b]$.

For a collection $\xi=(\xi^u)_{u\in U}$ of random variables defined on a probability space $(\Omega,\Fc,\P)$, we denote by $\P_{\xi^{\cdot}}$ the collection of the probability laws $(\P_{\xi^u})_{u\in U}$.

We denote by $A$ the set of control actions and we assume that it is a Polish space. As written above, we also denote by $d$ and $\bd$ the metrics on $A$ and $L^2_\lambda(\Pc_2(A))$    

\subsection{Coupled controlled mean field SDEs}

Let $(\Omega,\Fc, \P)$ be a complete probability space. For every $u\in U$ we are given an $\R^\ell$-valued standard Brownian motion $W^u=(W^u_t)_{t\in [0,T]}$ and an independent real random variable $Z^u$ having uniform distribution in $(0,1)$. We assume that $\{(W^u,Z^u)\,:\, u\in U\}$ is an independent family. For every $u\in U$
we denote by $(\Fc^{W^u}_t)_{t\in [0,T]}$ the natural filtration generated by $W^u$, by $\sigma(Z^u)$ the $\sigma$-algebra generated by $Z^u$ and by  
$\F^u=(\Fc^{u}_t)_{t\in [0,T]}$ the filtration given by 
$$
\Fc^{u}_t= \Fc^{W^u}_t\vee \sigma(Z^u) \vee \Nc,
\qquad t\in[0,T]\;,
$$
where $\Nc$ is the family of $\P$-null sets.

The coefficients of the control problem are functions $b,\sigma,f,g$ satisfying suitable assumptions detailed later; $b,\sigma,f$ are functions of $u\in U$, $x\in\R^d$, $\mu\in L^2_\lambda(\Pc_2(\R^d))$, $\nu\in L^2_\lambda(\Pc_2(A))$ 
 $a\in A$, with values respectively in $\R^d$, $\R^{d\times \ell}$, $\R$; $g$ is a real function of $u\in U$, $x\in\R^d$, $\mu\in L^2_\lambda(\Pc_2(\R^d))$.

The dynamics of the controlled system is described as follows. For every   starting time $t\in [0,T]$,  we solve a system of controlled stochastic It\^o differential equations, indexed by $u\in U$,  of the form 
 \begin{equation}\label{stateeq}
     \left\{
\begin{array}{rcl}
    \d X^u_s & = & b\big(u,X^u_s, \alpha^u_s,  \P_{X^{\cdot}_s}, \P_{\alpha^{\cdot}_s}\big)\d s \\
     & & \quad + \;  \sigma\big(u,X^u_s,\alpha^u_s,  \P_{X^{\cdot}_s},\P_{\alpha^{\cdot}_s}\big) \d W^u_s, \quad  t \leq s \leq T,
    \\
    X_t^u & = & \xi^u,    \;\; u\in U.
\end{array}\right.
 \end{equation}
Here, the initial condition is given by a collection ${\xi}=(\xi^u)_u$ of $\R^d$-valued random variable such that $\xi^u$ is $\Fc_t^u$-measurable for each $u\in U$. From the definition of $\Fc_t^u$, $\xi^u$ is also independent from $(W^u_s-W^u_t)_{s\geq t}$. We will also require that the map 
\begin{equation}
    \label{measinIt}
u \; \longmapsto \;  \Lc\big(W^u_{[0,t]},Z^u,\xi^u\big),
\end{equation}
is Borel measurable as a mapping from $U$ to $\Pc_2(C^\ell_{[0,t]}\times (0,1)\times\R^d)$ and 
$$
\int_U\E[|\xi^u|^2]\lambda(\d u) <\infty.
$$
This way we have
$\P_{\xi^\cdot}\in L^2_\lambda(\Pc_2(\R^d))$ and even
\beqs
\left(\Lc\big(W^u_{[0,t]},Z^u,\xi^u\big)\right)_{u\in U} & \in & 
L^2_\lambda\Big(
\Pc_2(
C^\ell_{[0,t]}\times (0,1)\times\R^d
)\Big).
\enqs
When these conditions are met we say that $\xi$ is an admissible initial condition at time $t$ and we write  $\xi\in \Ic_t$.

We recall that, for every $u\in U$, assuming that the random variable  $\xi^u$ is $\Fc_t^u$-measurable implies that  it is $\P$-almost surely equal to a variable of the form $\underline{\xi}^u(W^u_{[0,t]},Z^u)$ for a measurable function $\underline{\xi}^u:
C^\ell_{[0,t]}\times (0,1)
\to\R^d$.  If 
$\underline{\xi}^u(w,z)$ is jointly measurable in $(u,w,z)$ then the measurability condition 
\eqref{measinIt} is satisfied. 
However in general 
$\underline{\xi}^u(w,z)$ is only measurable as a function of $(w,z)$ and so \eqref{measinIt} should be explicitly required in the definition of the set $\Ic_t$.

For every $u\in U$,  the control processes $(\alpha^u_t)_{t\in [0,T]}$ are defined as follows. 
For an arbitrary Borel measurable function
$$
\alpha: U\times [0,T]\times C_{[0,T]}^\ell\times (0,1)\to A
$$
we define 
$$
\alpha_t^u=\alpha(u,t, W^u_{\cdot\wedge t},Z^u),
\quad t\in [0,T],\,u\in U,
$$
where $W^u_{\cdot\wedge t}$ is the path $s\mapsto 
W^u_{s\wedge t}$, $s\in [0,T]$.
We note that each process $(\alpha_t^u)_t$ is $\F^u$-predictable.
We say that $\alpha$ is an 
 admissible control policy (or simply a policy) if
 \begin{equation}
\label{alphaadmissible}
 \int_U \int_0^T\E[d(\alpha_s^u,0)^2]\,\d s\,\lambda(\d u) \; < \; \infty.
 \end{equation}
 We denote by
 $\Ac$ the class of  all admissible policies $\alpha$.
We remark that, denoting $\mathbb{W}_T$ the Wiener measure on $C^\ell_{[0,T]}$, condition \eqref{alphaadmissible} is equivalent to
 $$
 \int_U \int_0^T\int_{C^\ell_{[0,T]}}\int_0^1\Big[d\big(
 \alpha(u,s, w(\cdot\wedge s),z),0\big)^2\Big]\,\d z \,\mathbb{W}_T(\d w)\,\d s\,\lambda(\d u) \; < \; \infty, 
 $$
 which shows that the class $\Ac$ does not depend on the choice of 
$\Omega$, $\Fc$, $\P$,  $W^u $ and   $Z^u$.

A few explanations are in order. We note that \eqref{stateeq} is a  system of stochastic differential equations, indexed by $u\in U$, which is coupled due to occurrence of the terms $(\P_{X^{u}_s})_{u\in U}$. We will give conditions on $b$ and $\sigma$ implying that each equation in \eqref{stateeq} has a unique $\F^u$-adapted continuous solution (up to indistinguishability), for $\lambda$-almost all $u\in U$.

In particular, $(X^u)_u$ will be an independent family of stochastic processes, because this holds for the family of Brownian motions $(W^u)_u$. As mentioned in the introduction, this raises an issue concerning the mea\-su\-ra\-bi\-li\-ty with respect to the parameter $u\in U$.  To overcome this issue, we shall work with the probability laws, and  we will show that the obtained solution to \eqref{stateeq} has the additional property that the map
$u\mapsto \Lc(X^{u }, W^u, Z^u)$ is Borel measurable on $C_{[t,T]}^d\times C_{[0,T]}^\ell\times (0,1)$. 
Under some conditions to be precised later on the running and terminal rewards, this will ensure that the gain  functional 
in \eqref{defJ} is well defined.


A further comment concerns the introduction of the random variables $Z^u$ as an additional source of noise besides the Brownian motions $W^u$. Recall that each $Z^u$ has uniform distribution on $(0,1)$; we will use the well known property that any probability in $\R^d$ is the image of the uniform distribution under an appropriate Borel map $(0,1)\to\R^d$. For our purposes it is important that the initial conditions $\xi^u$ (for the state equation \eqref{stateeq} starting at time $t$) may have  an arbitrary element of $\Pc_2(\R^d)$ as its law. 
Our  requirement on  $\xi^u$ is that it should be square summable and $\Fc_t^u$-measurable, so we may define $\xi^u$ as an appropriate function of $Z^u$ and obtain the required distribution.  We also let $\xi^u$ depend on the trajectory of $W^u$ up to time $t$ to include initial conditions derived from the flow property of the considered processes. This will be helpful for establishing the dynamic programming principle.

Using the previous notation, it is convenient to recall that the index set $U$ is a Polish space with a  Borel   finite positive measure $\lambda$, 
the space of control actions $A$ is a Polish space, and $(\Omega,\Fc, \P)$ is a complete probability space, s.t. 
for every $u\in U$, $W^u$ is an $\R^\ell$-valued standard Brownian motion, $Z^u$ is a 
real random variable  with uniform distribution in $(0,1)$, independent of $W^u$, and   
$\{(W^u,Z^u): u\in U\}$ is an independent family. We next formulate the requirements that we need on the coefficients $b$, $\sigma$. 

\begin{Assumption}\label{assumptionbasic}$\,$
The functions
  \beqs
  b,\sigma : ~ U\times \R^d\times A\times L^2_\lambda(\Pc_2(\R^d))\times L^2_\lambda(\Pc_2(A)) & \longrightarrow &  \R^d,\;\R^{d\times \ell}
  \enqs
 are Borel measurable. There exist constants $L\ge0$, $M\ge 0$ such that
\begin{itemize} 
\item[(i)] $$ |b(u,x,a,\mu,\nu)-b(u,x',a,\mu',\nu)| \le L\left(|x-x'|+ \bd(\mu,\mu') \right)$$
 \item[(ii)] 
 $$
|\sigma(u,x,a,\mu,\nu)-\sigma(u,x',a,\mu',\nu)|
 \le L\left(|x-x'|+ \bd(\mu,\mu') \right),
 $$
\item[(iii)] 
$$
|b(u,x,a,\mu,\nu)|+
|\sigma(u,x,a,\mu,\nu)|
 \le M\big( 1+|x|+d(a,0)+ \bd(\mu,\delta_0)+\bd(\nu,\delta_0)\big)$$
\end{itemize} 
 for every  $u\in U$, $x,x'\in\R^d$, $\mu,\mu'\in L^2_\lambda(\Pc_2(\R^d))$, $\nu\in L^2_\lambda(\Pc_2(A))$, $a\in A$.
\end{Assumption}
We recall that $0$ also denotes a fixed element of $A$ and we note that in the previous expressions we write $\delta_0$ to denote the collection $(\delta_0)_{u\in U}$.

\begin{Remark} 
  We do not explicitly consider time-depending coefficients $b,\sigma$ (and later $f$). How\-ever all our results have immediate extensions to this case, with almost identical proofs, provided $b,\sigma,f$ are required to be measurable in time (jointly with the other arguments) and the assumptions hold for constants that do not depend on time.
\end{Remark}


\begin{Remark} 
Examples covered by Assumption \ref{assumptionbasic} 
 include the graphon interaction 
\eqref{X-graphon}
dis\-cus\-sed in the 
 introduction.
 Many other examples can be considered, for instance a drift (or a volatility) of the form
$b(\mu)=\int_U h(  \bar\mu^u)\,\lambda(du)$ for some Lipschitz function  $h:\R^d\to\R^{d}$, where $\bar\mu^u=\int_{\R^d}x\,\mu^u(dx)$. 
\end{Remark}

\bigskip

We are now able to state and prove the basic existence and uniqueness result for the controlled state equation.
We first define a set of stochastic processes where a unique solution will be found.

\begin{Definition} \label{solutionstate}
Given $t\in [0,T]$, 
we say that a family $X=(X^u)_u$ of stochastic processes with values in $\R^d$ belongs to the space $\Sc_t$ if 
\begin{enumerate}
    \item 
 the map
$u\mapsto \Lc(X^{u}, W^u, Z^u)$
is Borel measurable from $U$ to $\Pc_2(C_{[t,T]}^d\times C_{[0,T]}^\ell\times (0,1))$;

\item 
each process $X^u$ is  continuous and $\F^u$-adapted;
\item 
the following norm is finite:
$$
\|X\|:=\left(\int_U \E \bigg[\sup_{s\in [t,T]} |X_s^u|^2\bigg] \,\lambda(\d u)\right)^{1/2}.
$$
\end{enumerate} 
We say that $(X^u)_u\in \Sc_t$ is a solution to 
\eqref{stateeq} 
if the equations in 
  \eqref{stateeq} are satisfied for $\lambda$-almost every $u$.
We say that the solution is unique if, whenever 
$(X^u)_u, (\tilde X^u)_u\in \Sc_t$   solve
  \eqref{stateeq}   then  the processes $X^u$ and $\tilde X^u$ coincide, up to a $\P$-null set,  for $\lambda$-almost all $u\in U$.
\end{Definition}

\begin{Remark}  As defined above, the space $\Sc_t$ endowed with $\|\cdot\|$ is not a Banach space, and even a vector space. As a matter of fact, $\|X-Y\|$ is not well defined for any $X,Y\in\Sc_t$ as the joint law of $(X^u,Y^u)$ may not be a measurable function of $u\in U$. In particular, one cannot use a Picard iteration on $\Sc_t$ to construct solutions to \reff{stateeq}. To overcome this issue,  we shall work on the laws of processes which have the expected measurability property in $u\in U$.
\end{Remark}

\begin{Theorem} \label{exuniq} Suppose that Assumption \ref{assumptionbasic} holds.
Let $t\in [0,T]$ and $\xi\in \Ic_t$ be an admissible initial condition.
Let $\alpha\in\Ac$ be an admissible policy and define
$$
\alpha_t^u=\alpha(u,t, W^u_{\cdot\wedge t},Z^u),
\quad t\in [0,T],\,u\in U.
$$
Then there exists a unique solution $X=(X^u)_u\in \Sc_t$ to
the
 equation \eqref{stateeq}, in the sense of 
  Definition \ref{solutionstate}.
\end{Theorem}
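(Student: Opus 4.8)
The plan is to reduce the coupled system \eqref{stateeq} to a fixed-point problem at the level of laws, rather than on the processes themselves, precisely because the family $(X^u)_u$ is only measurable in $u$ through its law and not jointly in $(u,\omega)$. Concretely, I would work on the space $\mathcal{M}_t := L^2_\lambda(\Pc_2(C^d_{[t,T]}))$ of measurable collections $m = (m^u)_u$ of laws on path space, equipped with the metric $\bd$ built from $\Wc_2$ (using the sup-norm on $C^d_{[t,T]}$ and, to get a contraction, the truncated metrics $\bd(m_{\cdot\wedge s}, n_{\cdot\wedge s})$ indexed by $s\in[t,T]$). Given a candidate $m\in\mathcal{M}_t$, its time-$s$ marginals $m_s := (m^u_s)_u \in L^2_\lambda(\Pc_2(\R^d))$ are well defined and measurable in $u$ by the coordinate-projection remarks already established in the excerpt, and one also freezes the control marginals $\rho_s := \P_{\alpha^\cdot_s} = (\Lc(\alpha(u,s,W^u_{\cdot\wedge s},Z^u)))_u$, which lies in $L^2_\lambda(\Pc_2(A))$ by admissibility \eqref{alphaadmissible} together with the measurability hypothesis \eqref{measinIt} on $u\mapsto\Lc(W^u_{[0,t]},Z^u,\xi^u)$ (this is where Appendix \ref{secappen} on measurability is invoked to see that $u\mapsto\rho_s^u$ and $u\mapsto m_s^u$ feed measurably into $b,\sigma$).

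With $m$ frozen, equation \eqref{stateeq} decouples into a family of \emph{classical} McKean-free SDEs, one per $u\in U$:
\begin{equation}\label{plan:decoupled}
\d X^{u}_s = b\bigl(u,X^u_s,\alpha^u_s, m_s,\rho_s\bigr)\,\d s + \sigma\bigl(u,X^u_s,\alpha^u_s,m_s,\rho_s\bigr)\,\d W^u_s,\qquad X^u_t=\xi^u,
\end{equation}
which, for $\lambda$-a.e.\ $u$, has a unique strong $\F^u$-adapted solution by the standard Lipschitz/linear-growth theory (Assumption \ref{assumptionbasic}(i)--(iii) give exactly Lipschitz continuity in $x$ uniformly in the frozen arguments, and linear growth). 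The $L^2$ a priori bound $\E[\sup_{s}|X^u_s|^2] \le C(1 + \E|\xi^u|^2 + \int_t^T \E d(\alpha^u_s,0)^2\,\d s + \int_t^T(\bd(m_s,\delta_0)^2+\bd(\rho_s,\delta_0)^2)\,\d s)$, integrated against $\lambda(\d u)$, gives $X\in\Sc_t$ provided $m\in\mathcal{M}_t$; one must also check the required measurability of $u\mapsto\Lc(X^u,W^u,Z^u)$, which follows because $X^u$ is obtained as a measurable image (via the Picard/Euler limit, or a measurable selection of the solution map) of $\Lc(W^u_{[0,t]},Z^u,\xi^u)$ and the data $u\mapsto(b(u,\cdot),\sigma(u,\cdot),\alpha(u,\cdot))$, all of which are measurable in $u$ — this is the content of the auxiliary lemmas in Appendix \ref{secappen}. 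Define the map $\Phi:\mathcal{M}_t\to\mathcal{M}_t$ by $\Phi(m) = (\Lc(X^{u}))_u$ where $X$ solves \eqref{plan:decoupled}.

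Finally I would show $\Phi$ is a contraction for the metric $\bd$ on a short time interval, and then iterate over successive intervals to cover $[t,T]$; equivalently, one uses the standard trick of an equivalent metric $\int_t^T e^{-\beta(s-t)}\bd(m_{\cdot\wedge s},n_{\cdot\wedge s})^2\,\d s$ with $\beta$ large. The estimate is the usual one: for two frozen inputs $m,n$ with common control, subtract the two copies of \eqref{plan:decoupled}, apply It\^o and Burkholder–Davis–Gundy, use Assumption \ref{assumptionbasic}(i)--(ii) to bound the drift/diffusion differences by $L(|X^u_s-Y^u_s| + \bd(m_s,n_s))$, take expectations and integrate in $u$, and close with Gronwall; since $\Wc_2(\Lc(X^u_{\cdot\wedge s}),\Lc(Y^u_{\cdot\wedge s}))^2 \le \E\sup_{r\le s}|X^u_r-Y^u_r|^2$, one gets $\bd(\Phi(m)_{\cdot\wedge s},\Phi(n)_{\cdot\wedge s})^2 \le C\int_t^s \bd(m_{\cdot\wedge r},n_{\cdot\wedge r})^2\,\d r$, hence a contraction after finitely many iterations. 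Uniqueness in $\Sc_t$ follows from the same estimate applied to two solutions (their law-collections are then forced to agree, and since each $X^u$ solves a classical SDE with those laws plugged in, pathwise uniqueness for \eqref{plan:decoupled} gives $X^u=\tilde X^u$ a.s.\ for $\lambda$-a.e.\ $u$).

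The main obstacle is not the contraction estimate, which is routine, but the bookkeeping of measurability in $u$: one must verify at every stage — for the frozen marginals $m_s,\rho_s$, for the solution laws $\Lc(X^u)$, and for the joint laws $\Lc(X^u,W^u,Z^u)$ — that the relevant $u$-dependent objects are Borel measurable into the appropriate Wasserstein space, using only measurability in $u$ of the coefficients and the initial data (never joint measurability in $(u,\omega)$, which fails). This is exactly why the solution space $\Sc_t$ is defined with condition (1), why $\mathcal{M}_t$ is the natural state space for the fixed point, and why the auxiliary measurability results of Appendix \ref{secappen} are needed; getting these technical points right is the real work of the proof.
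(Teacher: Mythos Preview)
Your strategy is correct and matches the paper's approach: a fixed point for the map $\nu\mapsto(\Lc(X^{\nu,u}))_u$ on $L^2_\lambda(\Pc_2(C^d_{[t,T]}))$, with the decoupled SDEs \eqref{plan:decoupled} solved by standard Lipschitz theory and contraction obtained via iterated Gronwall exactly as you describe. The only place where the paper is more specific than your sketch is the measurability of $u\mapsto\Lc(X^{\nu,u},W^u,Z^u)$: rather than arguing through Picard limits directly, the paper freezes $(x,w,z)=(\xi^u,W^u_{[0,t]},Z^u)$, passes to an auxiliary space carrying a \emph{single} Brownian motion $\hat W$ independent of $u$ (so the frozen-parameter solution $\hat X^{\nu,u,x,w,z}$ is jointly measurable in $(u,x,w,z)$, in the spirit of Stricker--Yor), and then uses strong uniqueness to identify the laws on the two spaces --- this yields the explicit representation \eqref{formulaforjointlaw} and cleanly bypasses the $(u,\omega)$-measurability obstruction you correctly flag as the real work.
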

\begin{proof}
We borrow some ideas from  Proposition 2.1 in \cite{bayetal23}, but we need very different arguments because of the lack of time continuity of the coefficients in the stochastic equations due to the occurrence of the control process. The proof is postponed to Appendix \ref{secappenthmexuuniq}.   
\end{proof}

We recall that   ${\xi}\in\Ic_t$ requires the random variable  $\xi^u$ to be $\Fc_t^u$-measurable  for every $u\in U$. Therefore  it is $\P$-almost surely equal to a variable of the form $\underline{\xi}^u(W^u_{[0,t]},Z^u)$ for a measurable function $\underline{\xi}^u:
C^\ell_{[0,t]}\times (0,1)
\to\R^d$. The state equation \eqref{stateeq} corresponding to a given admissible control $\alpha\in\Ac$ can be written 
\begin{equation}\label{stateeqwithxiunder}
     \left\{
\begin{array}{l}
    d X^{u}_s =   b\big(u,X^{u}_s,\alpha^u_s, \P_{X^{\cdot}_s},\P_{\alpha^{\cdot}_s}\big)\d s 
    + \sigma\big (u,X^{u}_s,\alpha^u_s, \P_{X^{\cdot}_s},\P_{\alpha^{\cdot}_s} \big)\d W^u_s,
    \;\; s\in [t,T],
    \\
X_t^{u}=\underline{\xi}^u(W^u_{[0,t]},Z^u),
    \\
    \alpha_s^u=\alpha(u,s, W^u_{\cdot\wedge s},Z^u)\;\; u\in U.
\end{array}\right.
 \end{equation}

\vspace{2mm}


We next present a result providing estimates on solution to those systems of SDEs. 

\begin{Proposition} \label{stimeeqstate} Suppose that Assumption \ref{assumptionbasic} holds.
Let $t\in [0,T]$ and $\xi\in \Ic_t$ be an admissible initial condition.
Let $\alpha\in\Ac$ be an admissible policy and define
$$
\alpha_t^u=\alpha(u,t, W^u_{\cdot\wedge t},Z^u),
\quad t\in [0,T],\,u\in U.
$$
Then the unique solution $X=(X^u)_u\in \Sc_t$ to
the
 equation \eqref{stateeq} satisfies the following: there exists a constant $C\ge0$, depending on $T$, $\lambda(U)$ and on the constants $L$, $M$ in 
Assumption \ref{assumptionbasic}, such that
\begin{align} \label{estimX}
\int_U \E \bigg[\sup_{s\in [t,T]} |X_s^u|^2\bigg] \,\lambda(\d u)
& \le \;  
C\left( 1 + \int_U \E[|\xi^u|^2]\,\lambda(\d u)
+\int_U \int_t^T\E[|\alpha_s^u|^2]\,ds\,\lambda(\d u)\right).
\end{align}
Finally, if $(X^u)_u$, 
$(\bar X^u)_u$ are solutions corresponding to $\xi,\bar\xi\in \Ic_t$  and we assume
 that
the map  
\begin{equation}\label{jointonxixibar}
u\mapsto \Lc\big(W^u_{[0,t]},Z^u,\xi^u,\bar\xi^u\big),
\end{equation}
is Borel measurable as a mapping from $U$ to $\Pc_2(C^\ell_{[0,t]}\times (0,1)\times\R^d\times\R^d)$ then
we have
\begin{equation}\label{lipwrtxi}
    \int_U \E \bigg[\sup_{s\in [t,T]} |X_s^u-\bar X_s^u|^2\bigg] \,\lambda(\d u)
\le 
C\,\int_U \E[|\xi^u-\bar \xi^u|^2]\,\lambda(\d u)
\end{equation}
for a constant $C$ that only depends on the Lipschitz constant $L$, on $T$ and on $\lambda(U)$.
\end{Proposition}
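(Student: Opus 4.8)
The plan is to run the classical Burkholder--Davis--Gundy (BDG) plus Grönwall scheme for moment estimates of It\^o diffusions, the only new feature being the control of the mean-field coupling terms $\P_{X^\cdot_s}$ in the $L^2_\lambda$-metric $\bd$. Since $X$ (and $\bar X$) already belong to $\Sc_t$, all the $\sup$-moments appearing below are a priori finite and $\lambda$-integrable in $u$, so no truncation or localisation is needed and one may work directly with the $\sup$-norms; the $\Uc$-measurability in $u$ of the relevant expectations — needed to legitimise the $\lambda$-integrations and the use of Fubini — comes from the built-in measurability of $u\mapsto\Lc(X^u,W^u,Z^u)$ in the definition of $\Sc_t$, and, for the difference process, from the extra hypothesis \eqref{jointonxixibar} together with the construction in Appendix~\ref{secappenthmexuuniq} and the measurability lemmas of Appendix~\ref{secappen}. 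This measurability bookkeeping is, in fact, the only genuinely delicate point; the analytic estimates themselves are routine. Throughout, $C$ denotes a constant depending only on $T,\lambda(U),L,M$ that may change from line to line.

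For \eqref{estimX}, I would fix $u$, apply It\^o's formula to $|X^u_s|^2$, take $\sup$ over $s\in[t,T]$, bound the martingale term by BDG and absorb it on the left, and use the linear-growth bound (iii) of Assumption~\ref{assumptionbasic}, obtaining
\begin{align*}
\E\Big[\sup_{r\in[t,s]}|X^u_r|^2\Big]
\le C\Big(1+\E[|\xi^u|^2]+\int_t^s\E\big[|X^u_r|^2+|\alpha^u_r|^2
+\bd(\P_{X^\cdot_r},\delta_0)^2+\bd(\P_{\alpha^\cdot_r},\delta_0)^2\big]\,\d r\Big).
\end{align*}
The key observation is that $\bd(\P_{X^\cdot_r},\delta_0)^2=\int_U\Wc_2(\P_{X^v_r},\delta_0)^2\,\lambda(\d v)\le\int_U\E[\sup_{r'\in[t,r]}|X^v_{r'}|^2]\,\lambda(\d v)$ and, likewise, $\bd(\P_{\alpha^\cdot_r},\delta_0)^2\le\int_U\E[|\alpha^v_r|^2]\,\lambda(\d v)$; hence, writing $\phi(s):=\int_U\E[\sup_{r\in[t,s]}|X^u_r|^2]\,\lambda(\d u)<\infty$ and integrating the display against $\lambda(\d u)$,
\begin{align*}
\phi(s)\le C\Big(1+\int_U\E[|\xi^u|^2]\,\lambda(\d u)+\int_U\!\int_t^T\!\E[|\alpha^u_r|^2]\,\d r\,\lambda(\d u)\Big)+C\int_t^s\phi(r)\,\d r,\qquad s\in[t,T],
\end{align*}
and Grönwall's lemma delivers \eqref{estimX}.

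For \eqref{lipwrtxi}, I would set $Y^u_s:=X^u_s-\bar X^u_s$ and note that, since $X^u$ and $\bar X^u$ solve \eqref{stateeq} with the \emph{same} policy $\alpha$ (so with identical control paths $\alpha^u$ and identical control-law collections $\P_{\alpha^\cdot_s}$) driven by the same $W^u$, the It\^o differential of $Y^u$ only involves the increments $b(u,X^u_s,\alpha^u_s,\P_{X^\cdot_s},\P_{\alpha^\cdot_s})-b(u,\bar X^u_s,\alpha^u_s,\P_{\bar X^\cdot_s},\P_{\alpha^\cdot_s})$ and its $\sigma$-analogue, which by the Lipschitz bounds (i)--(ii) are dominated by $L\big(|Y^u_s|+\bd(\P_{X^\cdot_s},\P_{\bar X^\cdot_s})\big)$; no Lipschitz continuity in the $a$ or control-law arguments is used, precisely because the two systems share the same control. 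Then It\^o's formula for $|Y^u_s|^2$, $\sup$ over $s$, and BDG give
\begin{align*}
\E\Big[\sup_{r\in[t,s]}|Y^u_r|^2\Big]\le C\Big(\E[|\xi^u-\bar\xi^u|^2]+\int_t^s\E\big[|Y^u_r|^2+\bd(\P_{X^\cdot_r},\P_{\bar X^\cdot_r})^2\big]\,\d r\Big),
\end{align*}
and using the synchronous coupling $(X^v_r,\bar X^v_r)$ of $(\P_{X^v_r},\P_{\bar X^v_r})$ one gets $\bd(\P_{X^\cdot_r},\P_{\bar X^\cdot_r})^2\le\int_U\E[\sup_{r'\in[t,r]}|Y^v_{r'}|^2]\,\lambda(\d v)$. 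With $\psi(s):=\int_U\E[\sup_{r\in[t,s]}|Y^u_r|^2]\,\lambda(\d u)$ (finite since $X,\bar X\in\Sc_t$, measurable in $u$ by \eqref{jointonxixibar}), integrating in $u$ yields $\psi(s)\le C\big(\int_U\E[|\xi^u-\bar\xi^u|^2]\,\lambda(\d u)+\int_t^s\psi(r)\,\d r\big)$, and Grönwall's lemma concludes \eqref{lipwrtxi} with a constant depending only on $L$, $T$, $\lambda(U)$. As indicated above, the one step requiring care — rather than the estimates — is verifying that $u\mapsto\E[\sup_s|X^u_s|^2]$ and $u\mapsto\E[\sup_s|Y^u_s|^2]$ are $\Uc$-measurable, which is what the measurability clauses in $\Sc_t$, the hypothesis \eqref{jointonxixibar}, and the appendix constructions supply.
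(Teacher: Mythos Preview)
Your proposal is correct and follows essentially the same approach as the paper: a BDG/Doob-type $\sup$ estimate on the individual equations, the coupling bound $\bd(\P_{X^\cdot_r},\P_{\bar X^\cdot_r})^2\le\int_U\E[|X^v_r-\bar X^v_r|^2]\,\lambda(\d v)$, integration in $u$, and Gr\"onwall. The paper only spells out \eqref{lipwrtxi} (for $b=0$) and handles the measurability of $u\mapsto\E[\sup_s|Y^u_s|^2]$ by noting that \eqref{jointonxixibar} lets one apply Theorem~\ref{exuniq} to the \emph{pair} $(X^u,\bar X^u)$ as an $\R^{2d}$-valued system, which yields Borel measurability of $u\mapsto\Lc(X^u,\bar X^u)$ --- this is exactly the mechanism you gesture at via the appendices.
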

\begin{proof}
We write the proof for the case $b=0$. We only prove \eqref{lipwrtxi}, the other assertion being proved by similar arguments. 

We first note that the joint measurability condition \eqref{jointonxixibar} allows to apply Theorem \ref{exuniq} to the equation satified by the pair $(X^u,\bar X^u)$ and to conclude in particular that 
the map $u\mapsto \Lc\big(X^u,\bar X^u\big)$ is  
 Borel measurable as a mapping from $U$ to $\Pc_2(C^d_{[t,T]}\times C^d_{[t,T]})$.
Subtracting the equations for $X$ and $\bar X$, for some constant $C$ (possibly different from line to line) we have 
\begin{align}
    &\E\,\Big[ \sup_{r\in [t,s]}|X^u_r-\bar X_r^u|^2\Big]
\le C\,
    \E\,\Big[  |\xi^u-\bar \xi^u|^2\Big] 
    \\\quad &
    +
    C\, \E\, \int_t^s \Big|
\sigma\left(u,X^{u}_r,\alpha^u_r, \P_{X^{\cdot}_r}
    , \P_{\alpha^{\cdot}_r} \right)-
\sigma\left(u,\bar X^{u}_r,\alpha^u_r, 
     \P_{\bar X^{\cdot}_r}
    , \P_{\alpha^{\cdot}_r} \right)\Big|^2\,\d r
    \\\quad &
    \le C\, \E\,\Big[  |\xi^u-\bar \xi^u|^2\Big] +
    C\,  \int_t^s \Big\{\E\,[|
X^{u}_r- \bar X^{u}_r|^2 ]+\bd \Big( 
     \P_{X^{\cdot}_r},
     \P_{\bar X^{\cdot}_r}
    \Big)^2
    \Big\}\,\d r.
\end{align}
Since 
$$
\bd \Big(      \P_{X^{\cdot}_r},
     \P_{\bar X^{\cdot}_r}
\Big)^2\le\int_U
    \E\,   \Big[|
X^{u}_r- \bar X^{u}_r|^2 \Big]\,\lambda(\d u)
$$
integrating with respect to $\lambda(du)$ we obtain
\begin{align}&
\int_U
\E\,\Big[ \sup_{r\in [t,s]}|X^u_r-\bar X_r^u|^2\Big]\,\lambda(\d u)
\\&\quad 
\le C\,\int_U
    \E\,\Big[  |\xi^u-\bar \xi^u|^2\Big]\,\lambda(\d u) +
     C\,  \int_t^s \int_U \E\,[\sup_{q\in [t,r]}|
X^{u}_q- \bar X^{u}_q|^2 ]\,\lambda(\d u) \,\d r
\end{align}
and \eqref{lipwrtxi} follows from the Gronwall lemma.
\end{proof}

\begin{Remark} \label{rembsig} 
If the condition (iii) in Assumption \ref{assumptionbasic} is strengthened to 
\begin{align} \label{bsigfort}
|b(u,x,a,\mu,\nu)|+
|\sigma(u,x,a,\mu,\nu)| \; \le \;  M\big( 1+|x| + \bd(\mu,\delta_0) \big)
\end{align} 
for every  $u\in U$, $x\in\R^d$, $\mu\in L^2_\lambda(\Pc_2(\R^d))$, $\nu\in L^2_\lambda(\Pc_2(A))$, $a\in A$, 
then we get a stronger estimate  than \eqref{estimX}, namely
\begin{align} \label{estimX2}
\int_U \E \Big[\sup_{s\in [t,T]} |X_s^u|^2\Big] \,\lambda(\d u)
& \le \;  
C\Big( 1 + \int_U \E[|\xi^u|^2]\,\lambda(\d u) \Big).
\end{align}
\end{Remark}

\subsection{The control problem}

We make the following assumptions on the running and terminal cost functions.

\begin{Assumption}\label{assumptiononfgbis}$\,$
The functions
$$ 
f:U\times \R^d\times A\times L^2_\lambda(\Pc_2(\R^d))\times L^2_\lambda(\Pc_2(A))\to \R,
  \qquad
  g:U\times \R^d\times L^2_\lambda(\Pc_2(\R^d))\to \R
  $$
 are Borel measurable. 
 In addition, we assume that 
 Condition \eqref{bsigfort} holds and there exists a constant   $M\ge 0$ such that 
 \begin{align}
| f(u,x,a,\mu,\nu)|+ |g(u,x,\mu)| &\le \; { M\left(1+|x|^2+  \bd(\mu,\delta_0)^2 \right)}, 
\end{align} 
 for every  $u\in U$, $a\in A$, $x\in\R^d$, $\mu\in L^2_\lambda(\Pc_2(\R^d))$, $\nu\in L^2_\lambda(\Pc_2(A))$. 
 \end{Assumption}


\vspace{3mm}

From the measurability of the map $u$ $\mapsto$ $\Lc(X^u,W^u,Z^u)$ for the solution $X$ $=$ $X^{t,\xi,\alpha}$ 
$=$ $(X^{t,\xi,\alpha,u})_u$ $\in$ $\Sc_t$ to \eqref{stateeq}, given $t$ $\in$ $[0,T]$, $\xi$ $\in$ $\Ic_t$, $\alpha$ $\in$ $\Ac$,  and under Assumption \ref{assumptiononfgbis} on $f,g$, together with the square integrability conditions 
of $X$ in $\Sc_t$, we see that the map
\begin{align}
u &\mapsto \;   \E\Big[ \int_t^T f(u, X^u_s,\alpha^u_s, \P_{X^{\cdot}_s}, \P_{\alpha^{\cdot}_s})\,\d s 
+ g(u, X^u_T, \P_{X^{\cdot}_T}) \Big]
\end{align}
is Borel measurable, and we can then define the cost functional to be minimized 
\begin{align*}
    J(t,\xi,\alpha) = \int_U\E\Big[ \int_t^T f\big(u, X^u_s,\alpha^u_s, \P_{X^{\cdot}_s}, 
    \P_{\alpha^{\cdot}_s} \big)\,\d s + g\big(u, X^u_T, \P_{X^{\cdot}_T} \big) \Big]\,\lambda(\d u)
\end{align*}
and the associated  value function: 
\begin{align} \label{Vgrowth} 
V(t,\xi) &= \; \inf_{\alpha\in \Ac} J(t,\xi,\alpha),
\qquad t\in[0,T],\,\xi\in \Ic_t.
\end{align}
Moreover using Proposition \ref{stimeeqstate}, and under condition \eqref{bsigfort} (see Remark \ref{rembsig}), 
we have  
\begin{align} \label{growthV}
|V(t,\xi)| & \leq \;   C\Big( 1 + \int_U \E[|\xi^u|^2]\,\lambda(\d u) \Big), \quad  t \in [0,T], \; \xi \in \Ic_t.   
\end{align}

\section{Law invariance of the value function and DPP} \label{secDPP}

\subsection{Law invariance}

We show in this section the law invariance property of the value function, namely that 
$V(t,\xi)$ depends on $\xi$ only through its law. We impose additional assumption on the functions $f$, $g$.

\begin{Assumption}\label{assumptiononfg}
There exist  constants $K\ge 0$, $\gamma_i\in (0,1]$ ($i=1,2,3,4$) such that
\begin{align}&
|f(u,x,a,\mu,\nu)- f(u,x',a,\mu,\nu)|
\\&\quad
\le K \Big(|x-x'|^{\gamma_1}\,(1+ |x|+|x'|)^{2-\gamma_1}+ \bd(\mu,\mu) ^{\gamma_2}\,(1+ \bd(\mu,\delta_0)+\bd(\mu,\delta_0))^{2-\gamma_2}\Big),
\end{align}
\begin{align}&
 |g(u,x,\mu)-g(u,x',\mu)|
 \\&\quad
 \le K \Big(|x-x'|^{\gamma_3}\,(1+ |x|+|x'|)^{2-\gamma_3}+  \bd(\mu,\mu)^{\gamma_4}\,(1+ \bd(\mu,\delta_0)+\bd(\mu,\delta_0))^{2-\gamma_4}\Big),
 \end{align}
 for every  $u\in U$, $x,x'\in\R^d$, $\mu,\mu\in L^2_\lambda(\Pc_2(\R^d))$, $\nu\in L^2_\lambda(\Pc_2(A))$, $a\in A$.
\end{Assumption}

\begin{Remark}
The above assumption on $f,g$ are local H\"older dependence on $x,\mu$, uniformly  with respect to $a,\nu,u$. 
\end{Remark}
 
 \begin{Lemma}\label{LemIdLaw} Let Assumptions \ref{assumptionbasic}, \ref{assumptiononfgbis} hold.
Suppose that $\xi= (\xi^u)_u$, $\bar\xi = (\bar\xi^u)_u$ $\in$ $\Ic_t$  for some $t\in [0,T]$ are such that 
 \begin{equation}
     \label{cond-eq-law}
 \Lc(\xi^u,W^u_{[0,t]},Z^u )  =  \Lc(\bar\xi^u,W^u_{[0,t]},Z^u )
  \end{equation}
for $\lambda$-almost every $u\in U$. Then we have for any $\alpha\in \Ac$,
\beqs
J(t,\xi,\alpha) & = & J(t,\bar\xi,\alpha).
\enqs
\end{Lemma}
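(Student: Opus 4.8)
The plan is to show that the two controlled systems driven by $\xi$ and $\bar\xi$, respectively, produce identical joint laws of the relevant quantities, so that the cost functionals — which depend on the processes only through expectations and laws — must coincide. The key observation is that the construction of the solution in Theorem \ref{exuniq} (as described in the excerpt via the fixed point map ${\bf \Psi}$ on $L^2_\lambda(\Pc_2(C^d_{[t,T]}))$) shows that, for $\lambda$-a.e.\ $u$, the law $\Lc(X^u, W^u_{[0,T]}, Z^u)$ depends on the data only through $\alpha$, $t$, $b$, $\sigma$, and the joint law $\Lc(\xi^u, W^u_{[0,t]}, Z^u)$. Under hypothesis \eqref{cond-eq-law}, this input datum is the same for $\xi$ and $\bar\xi$ for $\lambda$-a.e.\ $u$, so I expect the corresponding solutions to satisfy $\Lc(X^u, W^u_{[0,T]}, Z^u) = \Lc(\bar X^u, W^u_{[0,T]}, Z^u)$ for $\lambda$-a.e.\ $u$.

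Concretely, first I would note that the control process $\alpha^u_s = \alpha(u,s,W^u_{\cdot\wedge s},Z^u)$ is a fixed measurable functional of $(W^u,Z^u)$, identical in both systems. Next, since \eqref{cond-eq-law} gives $\P_{\xi^\cdot} = \P_{\bar\xi^\cdot}$ in $L^2_\lambda(\Pc_2(\R^d))$ and more generally the collections of joint laws agree $\lambda$-a.e., the collections of marginal laws $\P_{X^\cdot_s}$ and $\P_{\bar X^\cdot_s}$, and $\P_{\alpha^\cdot_s}$ (which is literally the same in both cases), appearing as coupling terms in \eqref{stateeq} are the same. This is where I would invoke the uniqueness part of Theorem \ref{exuniq} together with the fixed-point characterization: the map ${\bf \Psi}$ depends only on $\alpha, t, \underline\xi^u(w,z), b, \sigma$ through the input law $\Lc(W^u_{[0,t]},Z^u,\xi^u)$, so its fixed point $\bar\nu$ — the flow of laws of the solution — is the same for $\xi$ and $\bar\xi$. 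Then, feeding this common $\bar\nu$ back into the (now decoupled) state equations and using the explicit formula for $\Lc(X^u,W^u_{[0,T]},Z^u)$ in terms of an auxiliary Brownian motion, one concludes the joint laws coincide $\lambda$-a.e.

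Finally, I would translate the equality of joint laws into equality of the cost functionals. Writing
$$
J(t,\xi,\alpha)=\int_U \E\Big[\int_t^T f\big(u,X^u_s,\alpha^u_s,\P_{X^\cdot_s},\P_{\alpha^\cdot_s}\big)\,\d s + g\big(u,X^u_T,\P_{X^\cdot_T}\big)\Big]\lambda(\d u),
$$
each inner expectation is, for fixed $u$, a functional of the law $\Lc(X^u,(\alpha^u_s)_s)$ together with the deterministic collections $(\P_{X^\cdot_s})_s$, $(\P_{\alpha^\cdot_s})_s$; since $\alpha^u$ is a fixed functional of $(W^u,Z^u)$ and $\Lc(X^u,W^u_{[0,T]},Z^u)=\Lc(\bar X^u,W^u_{[0,T]},Z^u)$ for $\lambda$-a.e.\ $u$, and since the coupling law-collections agree, the integrand at $u$ is the same for both systems for $\lambda$-a.e.\ $u$. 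Integrating over $U$ gives $J(t,\xi,\alpha)=J(t,\bar\xi,\alpha)$, using Assumption \ref{assumptiononfgbis} (quadratic growth) together with the $\Sc_t$-integrability to justify that all integrals are finite and the manipulations are legitimate. The main obstacle, I expect, is the bookkeeping needed to make the ``depends only on the data through $\Lc(W^u_{[0,t]},Z^u,\xi^u)$'' claim fully rigorous — essentially re-deriving, or carefully citing, the fixed-point construction from the proof of Theorem \ref{exuniq} and checking that the coupling terms $\P_{\alpha^\cdot_s}$ (depending on $\alpha$ but not on $\xi$) and $\P_{X^\cdot_s}$ (determined by the common fixed point) genuinely match; the measure-theoretic subtleties around $\lambda$-a.e.\ statements and the non-product measurability in $(u,\omega)$ must be handled with care, as emphasized in the introduction.
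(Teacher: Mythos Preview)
Your proposal is correct and follows essentially the same approach as the paper: the paper invokes Proposition \ref{EQUALAWSOLSDE} (uniqueness in law) to obtain $\Lc(X^u,W^u,Z^u)=\Lc(\bar X^u,W^u,Z^u)$ for $\lambda$-a.e.\ $u$, then deduces $\Lc(X^u,\alpha^u)=\Lc(\bar X^u,\alpha^u)$ and hence $J(t,\xi,\alpha)=J(t,\bar\xi,\alpha)$. Your sketch of the ``depends only on $\Lc(\xi^u,W^u_{[0,t]},Z^u)$ via the fixed point $\bar\nu$'' argument is exactly the content of that proposition's proof, so you are reproducing the same route rather than taking a different one.
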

\begin{proof} We first notice by Proposition \ref{EQUALAWSOLSDE} under condition \eqref{cond-eq-law} that 
 \beq\label{cond-eq-lawX}
 \Lc(X^u,W^u_{},Z^u ) & = & \Lc(\bar X^u,W^u_{},Z^u )
 \enq
 for $\lambda$-almost every $u\in U$, 
 where $(X^u)_u$ and $(\bar X^u)_u$ are the respective solutions to \eqref{stateeq}, with control $\alpha$, initial time $t$ and initial condition $\xi$ and  $\bar \xi$ respectively. In particular we get
 \beqs
 \Lc(X^u,\alpha^u_{} ) & = & \Lc(\bar X^u,\alpha^u_{} )
 \enqs
 for $\lambda$-almost every $u\in U$.
 As $J(t,\xi,\alpha)$ and $J(t,\bar \xi,\alpha)$ are expectations of  measurable functions of the $(X,\alpha)$ and $(\bar X,\alpha)$ respectively, we get the result.   
\end{proof}

\begin{Theorem}\label{lawinv}
   (law invariance). Let Assumptions \ref{assumptionbasic}, \ref{assumptiononfgbis}, \ref{assumptiononfg} hold 
   and fix $\xi= (\xi^u)_u$, $\bar\xi = (\bar\xi^u)_u$ $\in$ $\Ic_t$  for some $t\in [0,T]$.
    If
    $\P_{\xi^u}=\P_{\bar\xi^u}$ for $\lambda$-almost every $u\in U$ then  $V(t,\xi)= V(t,\bar\xi)$.
\end{Theorem}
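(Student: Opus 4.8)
The goal is to upgrade Lemma \ref{LemIdLaw} from an equality of joint laws $\Lc(\xi^u, W^u_{[0,t]}, Z^u)$ to an equality of marginal laws $\P_{\xi^u} = \P_{\bar\xi^u}$. The key structural fact is that in the definition of $\Ic_t$ the variable $\xi^u$ is $\Fc_t^u$-measurable, hence of the form $\underline\xi^u(W^u_{[0,t]}, Z^u)$, and the ``noise'' $(W^u_{[0,t]}, Z^u)$ has a fixed law (Wiener measure times Lebesgue on $(0,1)$) which does \emph{not} depend on $u$ in distribution. Since $Z^u$ is uniform on $(0,1)$ and independent of $W^u_{[0,t]}$, one can always re-randomize: given any target law $\rho \in \Pc_2(\R^d)$, there is a Borel map $\Theta_\rho : (0,1) \to \R^d$ pushing Lebesgue measure forward to $\rho$. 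The plan is to exploit this to replace an arbitrary admissible $\xi$ by a ``canonical'' one $\hat\xi^u := \Theta_{\P_{\xi^u}}(Z^u)$ that depends on $Z^u$ only, has the same marginal law $\P_{\hat\xi^u} = \P_{\xi^u}$, and \emph{additionally} satisfies $\Lc(\hat\xi^u, W^u_{[0,t]}, Z^u) = \P_{\xi^u}\text{-pushforward} \otimes (\mathbb{W}_t \otimes m)$ — wait, that is not literally true since $\hat\xi^u$ and $Z^u$ are dependent; rather $\Lc(\hat\xi^u, W^u_{[0,t]}, Z^u)$ is determined purely by $\P_{\xi^u}$ and the fixed noise law.

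Concretely, first I would check measurability: the map $u \mapsto \P_{\xi^u} \in \Pc_2(\R^d)$ is Borel (this follows from the admissibility condition \eqref{measinIt} together with the criterion \eqref{meascriterion}), and by a measurable selection argument one can choose the quantile-type maps $\Theta_{\P_{\xi^u}}$ to depend measurably on $u$, so that $(u, z) \mapsto \Theta_{\P_{\xi^u}}(z)$ is jointly Borel; this guarantees $\hat\xi := (\hat\xi^u)_u = (\Theta_{\P_{\xi^u}}(Z^u))_u \in \Ic_t$ (the joint measurability of $u \mapsto \Lc(W^u_{[0,t]}, Z^u, \hat\xi^u)$ being automatic from joint measurability of $\underline{\hat\xi}^u(w,z) = \Theta_{\P_{\xi^u}}(z)$, as noted in the paragraph after \eqref{measinIt}). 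The square-integrability $\int_U \E|\hat\xi^u|^2 \lambda(\d u) = \int_U \int_{\R^d} |x|^2 \P_{\xi^u}(\d x)\lambda(\d u) < \infty$ carries over since $\P_{\hat\xi^u} = \P_{\xi^u}$. Then I would observe that $\Lc(\hat\xi^u, W^u_{[0,t]}, Z^u)$ is a function of $\P_{\xi^u}$ alone: for bounded continuous $\Phi$, $\E[\Phi(\hat\xi^u, W^u_{[0,t]}, Z^u)] = \int_{(0,1)} \int_{C^\ell_{[0,t]}} \Phi(\Theta_{\P_{\xi^u}}(z), w, z)\, \mathbb{W}_t(\d w)\, m(\d z)$. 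Applying the same construction to $\bar\xi$ and using $\P_{\xi^u} = \P_{\bar\xi^u}$ $\lambda$-a.e., we get $\Lc(\hat\xi^u, W^u_{[0,t]}, Z^u) = \Lc(\hat{\bar\xi}^u, W^u_{[0,t]}, Z^u)$ for $\lambda$-a.e.\ $u$, where $\hat{\bar\xi}$ is built from $\bar\xi$. But by construction $\hat\xi$ and $\hat{\bar\xi}$ are literally the same object (both equal $(\Theta_{\P_{\xi^u}}(Z^u))_u$), so trivially $V(t,\hat\xi) = V(t, \hat{\bar\xi})$.

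It then remains to connect $V(t,\xi)$ with $V(t,\hat\xi)$. Since $\Lc(\xi^u, W^u_{[0,t]}, Z^u)$ and $\Lc(\hat\xi^u, W^u_{[0,t]}, Z^u)$ need \emph{not} coincide (they have the same first and last marginals, and the same $(W, Z)$-marginal, but possibly different dependence structure), Lemma \ref{LemIdLaw} does not apply directly. This is the main obstacle. To overcome it I would argue at the level of the value function rather than the cost functional: given $\varepsilon > 0$ and a policy $\alpha \in \Ac$ with $J(t, \hat\xi, \alpha) \le V(t,\hat\xi) + \varepsilon$, I want to produce a policy $\beta \in \Ac$ with $J(t, \xi, \beta) = J(t, \hat\xi, \alpha)$, and symmetrically. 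The idea is that a policy is an arbitrary Borel function of $(u, s, W^u_{\cdot\wedge s}, Z^u)$, so one has enough freedom to "route" the noise: informally, since both $\xi^u = \underline\xi^u(W^u_{[0,t]}, Z^u)$ and $\hat\xi^u = \Theta_{\P_{\xi^u}}(Z^u)$ are $\Fc_t^u$-measurable with the same law, one can find (again by measurable selection, using the uniform $Z^u$ as a randomization device independent of $W^u$ beyond time $t$ — though here one must be careful since $\xi^u$ may depend on all of $W^u_{[0,t]}$) a measurable transformation of the underlying noise on $[0,t] $ that maps the law $\Lc(\xi^u, W^u_{[0,t]}, Z^u)$ onto $\Lc(\hat\xi^u, W^u_{[0,t]}, Z^u)$ while preserving the future increments $(W^u_s - W^u_t)_{s \ge t}$; composing $\alpha$ with this transformation yields the desired $\beta$, and the law of the whole controlled system $(X^u, \alpha^u)$ is unchanged, so $J$ is unchanged. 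Equivalently and perhaps more cleanly, I would invoke the representation behind Proposition \ref{EQUALAWSOLSDE} / the fixed-point formula for $\Psi$: the law $\Lc(X^u, \alpha^u)$ — and hence each term of $J$ — depends on the data only through $\Lc(\xi^u, W^u_{[0,t]}, Z^u)$ and the deterministic functional $\alpha(u, \cdot, \cdot, \cdot)$, and by the re-randomization one can absorb the change from $\xi$ to $\hat\xi$ into a change from $\alpha$ to an equally admissible $\hat\alpha$ without altering the attained cost. Running the $\varepsilon$-argument in both directions gives $V(t,\xi) = V(t,\hat\xi) = V(t,\hat{\bar\xi}) = V(t,\bar\xi)$. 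I expect the bulk of the technical work to be the measurable-selection lemma producing the noise transformation jointly measurably in $u$, and the verification that it preserves admissibility \eqref{alphaadmissible} and the post-$t$ Brownian increments; Assumption \ref{assumptiononfg} (local Hölder continuity of $f,g$) presumably enters only to ensure the relevant integrals are finite and to control approximation errors if a limiting/density argument is used in place of the exact selection.
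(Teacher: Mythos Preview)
Your strategic outline is close to the paper's, and the reduction to a canonical $\hat\xi^u=\Theta_{\P_{\xi^u}}(Z^u)$ is reasonable (the paper does an analogous reduction via Proposition \ref{respresentation THM} so that $\underline\xi^u(w,z)$ and $\underline{\bar\xi}^u(w,z)$ can be taken jointly Borel in $(u,w,z)$). The genuine gap is the sentence ``one can find \ldots\ a measurable transformation of the underlying noise on $[0,t]$ that maps the law $\Lc(\xi^u,W^u_{[0,t]},Z^u)$ onto $\Lc(\hat\xi^u,W^u_{[0,t]},Z^u)$''. This is the whole difficulty, and you have not justified it. Concretely, what is needed is a measure-preserving $\tau^u$ of $(C^\ell_{[0,t]}\times(0,1),\mathbb W_t\otimes m)$ such that $\underline\xi^u\circ\tau^u$ agrees a.e.\ with the \emph{other} representation of the initial datum; even pointwise in $u$ this is a nontrivial Borel-isomorphism/transfer statement, and you also need the selection $u\mapsto\tau^u$ to be jointly Borel so that the induced policy $\beta$ lies in $\Ac$. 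Neither point is addressed.

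The paper does not attempt the exact version. Instead it invokes the approximate isomorphism lemma (\cite[Lemma 5.23]{cardel18a}): for every $\epsilon>0$ there is a measure-preserving $\tau^{\epsilon,u}$ with $|\underline\xi^u(\tau^{\epsilon,u}(w,z))-\underline{\bar\xi}^u(w,z)|\le\epsilon$ a.e. One then builds $(W^{\epsilon,u},Z^{\epsilon,u})$ from $\tau^{\epsilon,u}$, shows that the SDE with this transformed noise has the \emph{same law} as the original $X^u$ (Proposition \ref{EQUALAWSOLSDE}), and compares it with the SDE started at $\bar\xi^u$ driven by the original $W^u$ but with the $\tau^{\epsilon,u}$-twisted control $\alpha^\epsilon$. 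The two systems differ only in their initial conditions, by at most $\epsilon$; the Lipschitz estimate \eqref{lipwrtxi} gives $\int_U\E\sup_s|X^{\epsilon,u}_s-\bar X^{\epsilon,u}_s|^2\lambda(\d u)\le C\epsilon^2$, and Assumption \ref{assumptiononfg} (local H\"older continuity of $f,g$) is then \emph{essential}, not incidental, to conclude $|J(t,\xi,\alpha)-J(t,\bar\xi,\alpha^\epsilon)|\to 0$. So your last sentence has it backwards: the approximation route is not a fallback but the actual mechanism, and the H\"older assumption is precisely what makes it work. If you could prove the exact transformation with measurable dependence on $u$, you would get a cleaner proof not needing Assumption \ref{assumptiononfg}; but that is a substantial measurable-selection result you have not supplied.
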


\begin{proof}
We write the proof in the case $b\equiv 0$, the general case being completely similar. We first notice that from Lemma \ref{lawinv} and 
Theorem \ref{respresentation THM}, we can assume w.l.o.g. that the map  
\begin{equation}
\label{jointonxixibarbis}
u\mapsto \Lc\big(W^u_{[0,t]},Z^u,\xi^u,\bar\xi^u\big) ~\mbox{ is Borel measurable }
\end{equation}
 as a mapping from $U$ to $\Pc_2(C^\ell_{[0,t]}\times (0,1)\times\R^d\times\R^d)$. 
As a matter of fact, we first use Proposition \ref{respresentation THM} which gives Borel maps $\tilde{\xi}, \tilde{\bar{\xi}}: U\times 
C^\ell_{[0,t]}\times (0,1)
\to\R^d$ 
such that 
\beqs
\Lc\big(\tilde{\xi}^u( W^u_{[0,t]},Z^u), W^u_{[0,t]},Z^u\big) & = & \Lc\big({\xi}^u, W^u_{[0,t]},Z^u\big)\;,\\
\Lc\big(\tilde{\bar{\xi}}^u( W^u_{[0,t]},Z^u), W^u_{[0,t]},Z^u\big) & = & \Lc\big(\bar{\xi}^u, W^u_{[0,t]},Z^u\big)
\enqs
for every $u\in U$. Then using Lemma \ref{LemIdLaw}, we get
\beqs
J(t,\xi,\alpha) & = & J(t,(\tilde{\xi}^u(W_{[0,t]}^u,Z^u))_u,\alpha)\\
J(t,\bar\xi,\alpha) & = & J(t,(\tilde{\bar\xi}^u(W_{[0,t]}^u,Z^u))_u,\alpha)
\enqs
for any $\alpha\in \Ac$ and 
\beqs
V(t,\xi,) & = & V(t,(\tilde{\xi}^u(W_{[0,t]}^u,Z^u))_u)\\
V(t,\bar\xi) & = & V(t,(\tilde{\bar\xi}^u(W_{[0,t]}^u,Z^u))_u).
\enqs
We can therefore replace $(\xi,\bar \xi)$ by $(\tilde{\xi}^u(W_{[0,t]}^u,Z^u))_u,\tilde{\bar{\xi}}^u(W_{[0,t]}^u,Z^u)_u)$ which satisfies \reff{jointonxixibarbis} as $\tilde \xi$ and $\tilde{\bar{\xi}}$ are Borel measurable. 

We consider, for fixed $t\in [0,T]$, $\xi\in\Ic_t$ and $\alpha\in\Ac$, the system: 
\begin{equation}
     \left\{
\begin{array}{l}
    \d X^u_s =  \sigma\left(u,X^u_s,\alpha^u_s, \P_{X^{\cdot}_s},\P_{\alpha^{\cdot}_s}\right)\d W^u_s,
    \;\; s\in [t,T],
    \\
    X_t^u=\xi^u, \quad u \in U, 
    \\
    \alpha_s^u=\alpha(u,s, W^u_{\cdot\wedge s},Z^u), 
\end{array}\right.
 \end{equation}
 and the corresponding cost 
\begin{align*}
    J(t,\xi,\alpha) = \int_U\E\Big[ \int_t^T f(u, X^u_s,\alpha^u_s, \P_{X^{\cdot}_s}, \P_{\alpha^{\cdot}_s})\,\d s + g(u, X^u_T, \P_{X^{\cdot}_T}) \Big]\,\lambda(\d u).
\end{align*}
For every $\epsilon>0$ we will find another control policy $\alpha^\epsilon\in\Ac$ such that  $J(t,\alpha^\epsilon,\bar\xi)\to J(t,\alpha,\xi)$ as $\epsilon\to 0$. This way the required equality $V(t,\xi)= V(t,\bar\xi)$ will be proved.

We first look for a convenient expression for $J(t,\alpha,\xi)$. 
We recall that for every $u$ the random variable  $\xi^u$ is $\Fc_t^u$-measurable, so it is $\P$-almost surely equal to a variable of the form $\underline{\xi}^u(W^u_{[0,t]},Z^u)$ for a function $\underline{\xi}^u(w,z)\in\R^d$ defined for $w\in C^\ell_{[0,t]}$, $z\in (0,1)$ and measurable in $(w,z)$ (not necessarily in $u$). 
Recalling the notation in $\eqref{operatorbeta}$-$\eqref{operatorbeta2}$ we rewrite the controlled equation in the equivalent way: 
\begin{equation*}
     \left\{
\begin{array}{rrl}
    \d X^u_s &=&\displaystyle  \sigma\big(u,X^u_s,\alpha^u_s, \P_{X^{\cdot}_s},\P_{\alpha^{\cdot}_s}\big) \d W^u_s,
    \;\; s\in [t,T],
    \\
X_t^u&=&\displaystyle
\underline{\xi}^u(W^u_{[0,t]},Z^u),
    \\
\alpha_s^u&=&\displaystyle
\tilde{\alpha}(u,s, W^u_{\cdot\wedge s},W^u_{[0,t]},Z^u).
\end{array}\right.
 \end{equation*}
For every $u$ the random variable  $\bar \xi^u$, being also  $\Fc_t^u$-measurable,  is $\P$-almost surely equal to $\underline{\bar\xi}^u(W^u_{[0,t]},Z^u)$ for a measurable function $\underline{\bar\xi}^u:C^\ell_{[0,t]}\times (0,1)
\to\R^d$. By our assumptions we have, for $\lambda$-almost every $u$, 
$$
\Lc\left(\underline{\xi}^u(W^u_{[0,t]},Z^u)\right)=
\P_{\xi^u}=\P_{\bar\xi^u}
=\Lc\left(\underline{\bar\xi}^u(W^u_{[0,t]},Z^u)\right)
$$ 
Since
$\Lc(
W^u_{[0,t]},Z^u)=\mathbb{W}_T\otimes m$, 
it follows that 
$\underline{\xi}^u(w,z)=\underline{\bar\xi}^u(w,z)$ for almost all $(w,z)$ with respect to 
$\mathbb{W}_T\otimes m$, which is a non-atomic measure on the Polish space $C^\ell_{[0,t]}\times (0,1)$. By a classical  result (see e.g. \cite[Lemma 5.23]{cardel18a})
for every $\epsilon>0$ there exists a Borel measurable map
$$
\tau^{\epsilon,u}: C^\ell_{[0,t]}\times (0,1)\to C^\ell_{[0,t]}\times (0,1)
$$
that preserves the measure $\mathbb{W}_T\otimes m$ and satisfies, for $\lambda$-almost all $u$,
\begin{equation}
    \label{epsilonnear}
|\underline{\xi}^u(\tau^{\epsilon,u}(w,z))-\underline{\bar\xi}^u(w,z)|\le\epsilon,
\qquad (w,z)\in C^\ell_{[0,t]}\times (0,1), \,\mathbb{W}_T\otimes m-a.s.
\end{equation}
Denote
$(w,z)\mapsto \tau_1^{\epsilon,u}(w,z)\in C^\ell_{[0,t]}$ and  
$(w,z)\mapsto \tau_2^{\epsilon,u}(w,z)\in (0,1)$ the coordinate maps of 
$\tau^{\epsilon,u}=(\tau_1^{\epsilon,u},\tau_2^{\epsilon,u})$.
 Using the independence of $(W^u_{[0,t]},Z^u)$ and $W^u_{[t,T]}-W^u_t$, and the measure-preserving property of $\tau^{\epsilon,u}$
it is easy to check that the pair
 $$
W^{\epsilon,u}:= \tau_1^{\epsilon,u}(W^u_{[0,t]},Z^u)\oplus W^u_{[t,T]}, \quad Z^{\epsilon,u}:=\tau_2^{\epsilon,u}(W^u_{[0,t]},Z^u)
 $$
consists of a Wiener process on $[0,T]$ and an independent random variable with uniform distribution in $(0,1)$. 
Then we consider the equation
\begin{equation}\label{lawinvX1}
     \left\{
\begin{array}{rrl}
    \d X^{\epsilon,u}_s &=&\displaystyle  \sigma\big(u,X^{\epsilon,u}_s,\alpha^{\epsilon,u}_s, \P_{X^{\eps,\cdot}_s},\P_{\alpha^{\eps,\cdot}_s}\big)\d W^{u}_s,
    \;\; s\in [t,T],
    \\
X_t^{\epsilon,u}&=&\displaystyle
\underline{\xi}^u(W^{\epsilon,u}_{[0,t]},Z^{\epsilon,u}),
    \\
\alpha_s^{\epsilon,u}&=&\displaystyle
\tilde{\alpha}(u,s, W^{u}_{\cdot\wedge s},W^{\epsilon,u}_{[0,t]},Z^{\epsilon,u}).
\end{array}\right.
 \end{equation}
Since the increments of $W^u$ and $W^{\epsilon,u}$ coincide on the interval $[t,T]$, in the above equations  $\d W^{u}_s$ might be replaced by $\d W^{\epsilon,u}_s$ and
$W^{u}_{\cdot\wedge s}$ by $W^{\epsilon,u}_{\cdot\wedge s}$.
Then we see that the process $X^{\epsilon,u}$ is 
 the trajectory corresponding to the  control policy $\alpha\in\Ac$, the initial condition
$\underline{\xi}^u(w,z)$
 and the driving noise $(W^{\epsilon,u},Z^{\epsilon,u})$. 
 From
 Proposition \ref{EQUALAWSOLSDE},
 it follows that 
$
\Lc\left(
X^{\epsilon,u}, W^{\epsilon,u}_{[0,T]},  Z^{\epsilon,u}\right)
=
\Lc\left(
X^{u}, W^u_{[0,T]} ,Z^u\right)$
which implies
\beqs
\Lc\left(X^{\epsilon,u}_s,\alpha^{\epsilon,u}_s
\right)
& = &
\Lc\left(X^{u }_s,\alpha^{u }_s
\right)
\enqs
for every $s\in [t,T]$ and $u\in U$
and finally
\begin{align*}
    J(t,\xi,\alpha) = \int_U\E\Big[ \int_t^T f\big(u, X^{\epsilon,u}_s,\alpha^{\epsilon,u}_s,\P_{X^{\epsilon,\cdot}_s}, \P_{\alpha^{\epsilon,\cdot}_s}\big)\,\d s + g\big(u, X^{\epsilon,u}_T, \P_{X^{\epsilon,\cdot}_T}\big) \Big]\,\lambda(\d u),
\end{align*}
which is the expression we were looking for.

Next we consider the equation
\begin{equation}\label{lawinvX2}
     \left\{
\begin{array}{rrl}
    \d \bar X^{\epsilon,u}_s &=&\displaystyle  \sigma\big(u,\bar X^{\epsilon,u}_s,\alpha^{\epsilon,u}_s,  \P_{\bar X^{\epsilon,\cdot}_s} ,\P_{\alpha^{\eps,\cdot}_s}\big)\d W^u_s,
    \;\; s\in [t,T],
    \\
\bar X_t^{\epsilon,u}&=&\displaystyle
\bar\xi^u=\underline{\bar \xi}^u(W^u_{[0,t]},Z^u),
    \\
\alpha_s^{\epsilon,u}&=&\displaystyle
\tilde{\alpha}(u,s, W^{\epsilon,u}_{\cdot\wedge s},W^{\epsilon,u}_{[0,t]},Z^{\epsilon,u})
\\&=&
\displaystyle
\tilde{\alpha}(u,s, W^u_{\cdot\wedge s},\tau_1^{\epsilon,u}(W^u_{[0,t]},Z^u),\tau_2^{\epsilon,u}(W^u_{[0,t]},Z^u)).
\end{array}\right.
 \end{equation}
  The process $X^{\epsilon,u}$ starts at $\bar \xi^u$ and the equation
 contains the same control processes $\alpha^{\epsilon,u}$ as in \eqref{lawinvX1}, but it is now driven by 
 the original  noise $(W^u,Z^u)$. We see that
 the process $X^{\epsilon,u}$ is 
 the trajectory  corresponding to the control policy
 $$
\alpha^\epsilon(u,s,w,z):= \alpha\Big(u,s,\tau_1^{\epsilon,u}(w_{[0,t]},z)\oplus 
w_{[t,T]},\tau_2^{\epsilon,u}(w_{[0,t]},z)\Big),
\qquad s\in [t,T],
 $$
 while we may take both $\alpha$ and $\alpha^\epsilon$ to be constant for $s\in [0,t)$, without loss of generality. To check that $\alpha^\epsilon$ is indeed admissible,
 using the fact that $ (W^{\epsilon,u} ,Z^{\epsilon,u})$ and
 $ (W^{u} ,Z^{u})$ have the same law, we verify that
 \begin{align}\label{alphaepsconst}
 \int_U \int_t^T\E[|\alpha^{\epsilon,u}_s|^2]\,\d s\,\lambda(\d u)
&=
 \int_U \int_t^T\E[| 
\tilde{\alpha}(u,s, W^{\epsilon,u}_{\cdot\wedge s},W^{\epsilon,u}_{[0,t]},Z^{\epsilon,u})|^2]
\,\d s\,\lambda(\d u)
\\ &
=
\int_U \int_t^T\E[| 
\tilde{\alpha}(u,s, W^{u }_{\cdot\wedge s},W^{ u}_{[0,t]},Z^{ u})|^2]
\,\d s\,\lambda(\d u)
\\  &
=
 \int_U \int_t^T\E[|\alpha^{u}_s|^2]\,ds\,\lambda(\d u),
 \end{align}
 which is finite and even independent of $\epsilon$.
 The corresponding cost is then 
\begin{align*}
J(t,\bar\xi,\alpha^\epsilon) = \int_U\E\Big[ \int_t^T f\big(u, \bar X^{\epsilon,u}_s,\alpha^{\epsilon,u}_s,\P_{\bar X^{\epsilon,\cdot}_s}, \P_{\alpha^{\epsilon,\cdot}_s}\big)\,\d s + g\big(u, \bar X^{\epsilon,u}_T, \P_{\bar X^{\epsilon,\cdot}_T}\big) \Big]\,\lambda(\d u).
\end{align*}
To conclude the proof it remains to prove that 
$J(t,\bar\xi,\alpha^\epsilon)\to J(t,\xi,\alpha)$ as $\epsilon\to 0$. Comparing
  \eqref{lawinvX1} and
  \eqref{lawinvX2} and applying estimate \eqref{lipwrtxi} in Proposition \ref{stimeeqstate}  we see that there exists a constant $C\ge 0$, depending only on $T$ and the Lipschitz constants of $b,\sigma$ such that
$$
\int_U\E\Big[\sup_{s\in [t,T]}|X^{\epsilon,u}_s-\bar X^{\epsilon,u}_s|^2\Big]\,\lambda(\d u) \le C 
\int_U\E\Big[|\underline{\xi}^u(W^{\epsilon,u}_{[0,t]},Z^{\epsilon,u})-\bar \xi^{u}|^2\Big]\,
\lambda(\d u) \;.
$$  
  But we have
  $$
\Big|\underline{\xi}^u(W^{\epsilon,u}_{[0,t]},Z^{\epsilon,u})-\bar \xi^{u}\Big|
=\Big|\underline{\xi}^u(
\tau^{\epsilon,u}(W^u_{[0,t]},Z^u))-
\underline{\bar \xi}^u(W^u_{[0,t]},Z^u)\Big|\le \epsilon
$$
which follows from 
\eqref{epsilonnear}
and the fact that $(W^{u}_{[0,t]},Z^{u})$ has law $\,\mathbb{W}_T\otimes m$. So we obtain
\begin{equation}
    \label{unifxebar}
\int_U\E\Big[\sup_{s\in [t,T]}|X^{\epsilon,u}_s-\bar X^{\epsilon,u}_s|^2\Big]\,\lambda(\d u) \le C  \epsilon^2\,\lambda(U).
\end{equation}  
Still using Proposition \ref{stimeeqstate}, we also note that
\begin{align}    \label{unifinepsxebarx}
&\int_U\E\Big[\sup_{s\in [t,T]}|X^{\epsilon,u}_s |^2\Big]\,\lambda(\d u) 
+ \int_U\E\Big[\sup_{s\in [t,T]}|\bar X^{\epsilon,u}_s |^2\Big]\,\lambda(\d u) 
\\&\quad
\le C  
\left( 
 \int_U  \E[| 
\xi^{u }|^2]
\,ds\,\lambda(\d u)+
 \int_U  \E[| 
\xi^{u }|^2]
\,ds\,\lambda(\d u)
+\int_U \int_t^T\E[|\alpha^{\epsilon,u}_s|^2]\,ds\,\lambda(\d u)
\right)
\\&\quad \le C ,
\end{align}
for some constant $C$ which is not dependent on $\epsilon$, by \eqref{alphaepsconst}. Using our assumptions on $f$, $g$ we have
\begin{align}
& \; |J(t,\xi,\alpha)-    J(t,\bar\xi,\alpha^\epsilon)| \\
\le & \quad  C
\int_U \int_t^T \E\bigg[
  |X^{\epsilon,u}_s-\bar X^{\epsilon,u}_s|^{\gamma_1}\,(1+ |X^{\epsilon,u}_s|+|\bar X^{\epsilon,u}_s|)^{2-\gamma_1}\bigg] \,\d s\,\lambda(\d u) \\
 &\qquad    + \;  C
\int_U \int_t^T\Big( \bd(\P_{ X^{\epsilon,\cdot}_s},\P_{\bar X^{\epsilon,\cdot}_s}) ^{\gamma_2}\,(1+ \bd(\P_{ X^{\epsilon,\cdot}_s},\delta_0)+\bd(\P_{\bar X^{\epsilon,\cdot}_s},\delta_0))^{2-\gamma_2}\Big)
    \,\d s\,\lambda(\d u) \\ 
 & \qquad  + \; C\int_U\E\bigg[
   | X^{\epsilon,u}_T- \bar X^{\epsilon,u}_T|^{\gamma_3}\,(1+ |X^{\epsilon,u}_T|+|\bar X^{\epsilon,u}_T|)^{2-\gamma_3}\bigg]\,\lambda(\d u)\\
&\qquad
   + \; C\int_U \Big(
     \bd(\P_{ X^{\epsilon,\cdot}_T},\P_{\bar X^{\epsilon,\cdot}_T})^{\gamma_4}\,(1+ \bd(\P_{ X^{\epsilon,\cdot}_T},\delta_0)+\bd(\P_{\bar X^{\epsilon,\cdot}_T},\delta_0))^{2-\gamma_4}\Big)
\,\lambda(\d u).
\end{align}
The H\"older inequality, with conjugate exponents $2/\gamma_1$ and $2/(2-\gamma_1)$, gives
\begin{align}&
\int_U \int_t^T \E\bigg[
  |X^{\epsilon,u}_s-\bar X^{\epsilon,u}_s|^{\gamma_1}\,(1+ |X^{\epsilon,u}_s|+|\bar X^{\epsilon,u}_s|)^{2-\gamma_1}\bigg] \,\d s\,\lambda(\d u)
\\&\quad
\le 
\int_U \int_t^T \left\{\Big(\E[
  |X^{\epsilon,u}_s-\bar X^{\epsilon,u}_s|^2]
  \Big)^{\frac{\gamma_1}{2}}\,\Big(
 \E[ (1+ |X^{\epsilon,u}_s|+|\bar X^{\epsilon,u}_s|)^2]
 \Big)^{\frac{2-\gamma_1}{2}}
  \right\}\,\d s\,\lambda(\d u)
\\&\quad
\le
T\int_U   \left\{\Big(\E[\sup_{s\in [t,T]}
  |X^{\epsilon,u}_s-\bar X^{\epsilon,u}_s|^2]
  \Big)^{\frac{\gamma_1}{2}}\,\Big(
 \E[ \sup_{s\in [t,T]}(1+ |X^{\epsilon,u}_s|+|\bar X^{\epsilon,u}_s|)^2]
 \Big)^{\frac{2-\gamma_1}{2}}
  \right\} \,\lambda(\d u)
\\&\quad
\le
T\left\{\int_U    \E[\sup_{s\in [t,T]}
  |X^{\epsilon,u}_s-\bar X^{\epsilon,u}_s|^2]
   \,\lambda(\d u)\right\}^{\frac{\gamma_1}{2}}
  \left\{\int_U  
 \E[ \sup_{s\in [t,T]}(1+ |X^{\epsilon,u}_s|+|\bar X^{\epsilon,u}_s|)^2]
 \,\lambda(\d u) 
  \right\} ^{\frac{2-\gamma_1}{2}}
  \\&\quad
\le
C\left\{ \epsilon^2\,\lambda(U)\right\}^{\gamma_1/2}
  \end{align}
 by \eqref{unifxebar}
and \eqref{unifinepsxebarx}. 
Using the inequality 
$$
\bd(\P_{ X^{\epsilon,\cdot}_s},\P_{\bar X^{\epsilon,\cdot}_s})^2
=
 \int_U \Wc_2(
\P_{ X^{\epsilon,u}_s},\P_{ \bar X^{\epsilon,u}_s})^2\,\lambda(\d u)
\le \int_U \E[
  |X^{\epsilon,u}_s-\bar X^{\epsilon,u}_s|^2]\,\lambda(\d u)
$$
similar passages and  the H\"older inequality allow to treat the other terms and conclude that
$J(t,\xi,\alpha)-    J(t,\bar\xi,\alpha^\epsilon)\to 0$.
\end{proof}

\vspace{3mm}

In view of Theorem \ref{lawinv}  it is possible to define a function $\upsilon:[0,T]\times L^2_\lambda(\Pc_2(\R^d))\to \R$ as follows. For any  $t\in [0,T]$ and $\mu\in L^2_\lambda(\Pc_2(\R^d))$, let us choose $\xi=(\xi^u)_u\in\Ic_t$ such that $\P_{\xi^u}=\mu^u$ for $\lambda$-almost every $u\in U$, and let us set
\begin{align} \label{defupsilon} 
\upsilon(t,\mu) &= \;  V(t,\xi).
\end{align} 

Finally we show how a required variable $\xi$ can be constructed, given arbitrary $t$ and $\mu$. We note that the map $(u,A)\mapsto \mu^u(A)$ defined for $u\in U$ and any Borel set of $\R^d$ is a transition kernel. By a known extension of the Skorohod construction, there exists a (jointly) measurable function $j:U\times (0,1)\to\R^d$ such that, for every $u$, the image of the Lebesgue measure on $(0,1)$ under the map $z\mapsto j(u,z)$  equals $\mu^u$; therefore we may define $\xi^u=j(u,Z^u)$. This way we obtain the required function $\underline{\xi}^u(w,z)=j(u,z)$ (that does not depend on $w$).







\subsection{Dynamic programming principle}


We note that for the solution $X_s^{t,\xi,\alpha,u}$ to equation \eqref{stateeq} we have the following flow property: for $0\le t\le \theta\le T$, $\alpha\in\Ac$, and $\xi\in\Ic_t$ we have, for every $\lambda-$a.e. $u\in U$, $\P$-a.s.
\begin{align} \label{FlowPPTY}
X_s^{t,\xi,\alpha,u} & = \; 
X_s^{\theta,
X_\theta^{t,\xi,\alpha}
,\alpha,u},
\qquad s\in [\theta,T].
\end{align} 
This follows immediately from the uniqueness statement in Theorem \ref{exuniq}, since both terms are solution to the state equation \eqref{stateeq} in the interval $[\theta,T]$. As a consequence, we can state the dynamic programming principle for the value function $V$.

\begin{Theorem} \label{DPP}
    For $t\in [0,T]$  and $\xi\in\Ic_t$,  we have
    \begin{eqnarray*}
V(t,\xi) & = & \inf_{\alpha\in \Ac} \left\{ \int_U
\E\Big[
\int_t^\theta
f\big(u,X_s^{t,\xi,\alpha,u},
{ \P_{X_s^{t,\xi,\alpha,\cdot}}  }, \alpha_s^u,  \P_{\alpha_s^{\cdot}} 
\big)\,\d s\Big]\,\lambda(\d u)\right.\\
 & & \left.\qquad\qquad + \;  V\left(\theta,
(X_\theta^{t,\xi,\alpha,u})_{u}\right) \right\}, 
\end{eqnarray*}
for any $\theta\in[t,T]$.
\end{Theorem}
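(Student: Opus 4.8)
The plan is to follow the classical two-inequality argument for the dynamic programming principle, adapted to the present setting where the state variable is the collection of laws rather than the processes themselves. Fix $t \in [0,T]$, $\xi \in \Ic_t$ and $\theta \in [t,T]$, and denote by
$$
W(t,\xi) = \inf_{\alpha \in \Ac} \left\{ \int_U \E\Big[ \int_t^\theta f\big(u,X_s^{t,\xi,\alpha,u},\alpha_s^u, \P_{X_s^{t,\xi,\alpha,\cdot}}, \P_{\alpha_s^{\cdot}}\big)\,\d s \Big]\,\lambda(\d u) + V\big(\theta, (X_\theta^{t,\xi,\alpha,u})_u\big) \right\}
$$
the right-hand side. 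First I would record two structural facts used throughout: the flow property \eqref{FlowPPTY}, which gives $X_s^{t,\xi,\alpha,u} = X_s^{\theta, X_\theta^{t,\xi,\alpha},\alpha,u}$ for $s \in [\theta,T]$, $\lambda$-a.e. $u$, $\P$-a.s.; and the fact that $X_\theta^{t,\xi,\alpha} := (X_\theta^{t,\xi,\alpha,u})_u$ is an admissible initial condition at time $\theta$, i.e. lies in $\Ic_\theta$, which follows from the measurability of $u \mapsto \Lc(X^u,W^u,Z^u)$ built into the definition of $\Sc_t$ together with the square-integrability estimate \eqref{estimX}. I would also decompose the cost additively: by definition of $J$ and Fubini,
$$
J(t,\xi,\alpha) = \int_U \E\Big[\int_t^\theta f(\cdots)\,\d s\Big]\lambda(\d u) + \int_U \E\Big[\int_\theta^T f(\cdots)\,\d s + g(\cdots)\Big]\lambda(\d u),
$$
and the second term, by the flow property, equals $J(\theta, X_\theta^{t,\xi,\alpha}, \alpha)$ — being careful that the laws $\P_{X_s^{t,\xi,\alpha,\cdot}}$ appearing as arguments of $f,g$ for $s \geq \theta$ coincide with $\P_{X_s^{\theta, X_\theta^{t,\xi,\alpha},\alpha,\cdot}}$, which is exactly what \eqref{FlowPPTY} provides (up to $\lambda$-null sets of $u$ and $\P$-null events, which do not affect the integrals).

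For the inequality $V(t,\xi) \geq W(t,\xi)$: given any $\alpha \in \Ac$, the decomposition above gives $J(t,\xi,\alpha) = \int_U \E[\int_t^\theta f\,\d s]\lambda(\d u) + J(\theta, X_\theta^{t,\xi,\alpha},\alpha) \geq \int_U \E[\int_t^\theta f\,\d s]\lambda(\d u) + V(\theta, X_\theta^{t,\xi,\alpha})$, since $V(\theta,\cdot)$ is the infimum over all admissible policies and $\alpha \in \Ac$ is one of them. Taking the infimum over $\alpha \in \Ac$ on the left yields $V(t,\xi) \geq W(t,\xi)$.

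For the reverse inequality $V(t,\xi) \leq W(t,\xi)$: fix $\alpha \in \Ac$ and $\eps > 0$. The point is to glue $\alpha$ on $[t,\theta]$ to a near-optimal control for the problem started at $\theta$ with initial condition $X_\theta^{t,\xi,\alpha}$. Here I would invoke the law invariance (Theorem \ref{lawinv}), so that $V(\theta, X_\theta^{t,\xi,\alpha})$ depends only on the collection of marginal laws $\P_{X_\theta^{t,\xi,\alpha,\cdot}} \in L^2_\lambda(\Pc_2(\R^d))$; then pick, via the Skorohod-type construction described after \eqref{defupsilon}, a measurable initial datum of the form $\eta^u = j(u,Z^u)$ with $\P_{\eta^u} = \P_{X_\theta^{t,\xi,\alpha,u}}$, and a policy $\beta^\eps \in \Ac$ with $J(\theta,\eta,\beta^\eps) \leq V(\theta,\eta) + \eps = V(\theta, X_\theta^{t,\xi,\alpha}) + \eps$. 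One then defines a concatenated policy $\gamma^\eps$ that agrees with $\alpha$ on $[t,\theta]$ and, on $[\theta,T]$, applies $\beta^\eps$ but reading its arguments through an appropriate measure-preserving map matching $\eta^u$ to $X_\theta^{t,\xi,\alpha,u}$ (this is the same device as in the proof of Theorem \ref{lawinv}, using that $\eta^u$ and $X_\theta^{t,\xi,\alpha,u}$ have the same law and exploiting the flow property together with uniqueness, Theorem \ref{exuniq}, to identify the laws of the resulting state processes on $[\theta,T]$). Checking $\gamma^\eps \in \Ac$ reduces to the square-integrability bound \eqref{alphaadmissible}, which is inherited from $\alpha$ and $\beta^\eps$. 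With this construction $J(t,\xi,\gamma^\eps) = \int_U \E[\int_t^\theta f(\cdots)\,\d s]\lambda(\d u) + J(\theta,\eta,\beta^\eps) \leq \int_U \E[\int_t^\theta f(\cdots)\,\d s]\lambda(\d u) + V(\theta,X_\theta^{t,\xi,\alpha}) + \eps$, hence $V(t,\xi) \leq \int_U \E[\int_t^\theta f(\cdots)\,\d s]\lambda(\d u) + V(\theta,X_\theta^{t,\xi,\alpha}) + \eps$; taking the infimum over $\alpha$ and letting $\eps \to 0$ gives $V(t,\xi) \leq W(t,\xi)$. Combining the two inequalities proves the claim.

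The main obstacle is not the abstract scheme, which is standard, but the measurability and concatenation bookkeeping peculiar to this non-exchangeable framework: one must ensure at each step that the relevant maps $u \mapsto \Lc(\cdots)$ remain Borel measurable (so that all the $\lambda$-integrals make sense and $X_\theta^{t,\xi,\alpha} \in \Ic_\theta$), and that pasting a control measurable in $(u,s,w,z)$ after time $\theta$ onto the trajectory $X^{t,\xi,\alpha}$ produces again an admissible policy whose post-$\theta$ state laws are those of the restarted system. The law invariance theorem is what makes the restart legitimate despite the fact that $X_\theta^{t,\xi,\alpha,u}$ need not be expressible as a jointly measurable function of $(u, W^u_{[0,\theta]}, Z^u)$ — it lets us replace it by the constructed $\eta^u = j(u,Z^u)$ without changing the value. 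A secondary technical point is verifying that the running-cost term $\int_U \E[\int_t^\theta f\,\d s]\lambda(\d u)$ is finite and well defined, which follows from Assumption \ref{assumptiononfgbis} and the estimate \eqref{estimX}.
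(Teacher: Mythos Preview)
Your argument for the inequality $V(t,\xi) \geq W(t,\xi)$ matches the paper's exactly.

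For the reverse inequality you take an unnecessary detour. The paper's proof is more direct: since $X_\theta^{t,\xi,\alpha} \in \Ic_\theta$ (each $X_\theta^{t,\xi,\alpha,u}$ is $\Fc^u_\theta$-measurable by adaptedness, the map $u \mapsto \Lc(W^u_{[0,\theta]},Z^u,X^u_\theta)$ is Borel by the defining property of $\Sc_t$, and square-integrability comes from Proposition~\ref{stimeeqstate}), one may simply pick $\alpha^\eps \in \Ac$ with $J\big(\theta, (X_\theta^{t,\xi,\alpha,u})_u, \alpha^\eps\big) \leq V\big(\theta, (X_\theta^{t,\xi,\alpha,u})_u\big) + \eps$ and set $\bar\alpha^{\eps,u}_s = \alpha^u_s \mathds{1}_{[t,\theta)}(s) + \alpha^{\eps,u}_s \mathds{1}_{[\theta,T]}(s)$. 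Since $\bar\alpha^\eps$ agrees with $\alpha$ on $[t,\theta]$, the flow property gives $J(t,\xi,\bar\alpha^\eps) = \int_U \E[\int_t^\theta f\,\d s]\,\lambda(\d u) + J\big(\theta, (X_\theta^{t,\xi,\alpha,u})_u, \alpha^\eps\big)$, and the conclusion follows. No appeal to law invariance is made.

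Your route through law invariance and a Skorohod representative $\eta^u = j(u,Z^u)$ is both unnecessary and, as written, has a gap. Your worry that ``$X_\theta^{t,\xi,\alpha,u}$ need not be expressible as a jointly measurable function of $(u, W^u_{[0,\theta]}, Z^u)$'' misreads the definition of $\Ic_\theta$: joint measurability in $(u,w,z)$ is \emph{not} required there --- only Borel measurability of $u \mapsto \Lc(W^u_{[0,\theta]}, Z^u, \xi^u)$ is, and this holds for $X_\theta^{t,\xi,\alpha}$ by Definition~\ref{solutionstate}. Moreover, the identity you assert, $J(t,\xi,\gamma^\eps) = \int_U \E[\int_t^\theta f\,\d s]\,\lambda(\d u) + J(\theta,\eta,\beta^\eps)$, is not what the measure-preserving-map device in the proof of Theorem~\ref{lawinv} delivers: that argument produces an \emph{approximation} $J(\theta, X_\theta^{t,\xi,\alpha}, \cdot) \to J(\theta, \eta, \beta^\eps)$ as an auxiliary parameter tends to $0$, not an exact equality, because $\eta^u$ and $X_\theta^{t,\xi,\alpha,u}$ share only their marginals and not their joint law with $(W^u_{[0,\theta]}, Z^u)$ (so Lemma~\ref{LemIdLaw} does not apply). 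Your scheme can be repaired by introducing a second small parameter, but this is precisely the complication the paper avoids by restarting directly from $X_\theta^{t,\xi,\alpha}\in\Ic_\theta$.
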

\begin{proof} 
Fix $0\le t\le \theta\le T$ and $\xi\in\Ic_t$. For $\alpha\in \Ac$ we have from \eqref{FlowPPTY} and the definitions of the cost functional $J$, and the value function $V$
\beqs
J(t,\xi,\alpha) & = &  \int_U\E\Big[ \int_t^\theta f(u, X^{t,\xi,\alpha,u}_s,\alpha^u_s, \P_{X^{t,\xi,\alpha,\cdot}_s}, \P_{\alpha^{\cdot}_s})\,\d s  \Big]\,\lambda(\d u)\\
 & & +\int_U\E\Big[ \int_\theta^T f\big(u, X^{u,\theta,
X_\theta^{t,\xi,\alpha}
,\alpha}_s,\alpha^u_s, \P_{X^{\theta,
X_\theta^{t,\xi,\alpha},\alpha,\cdot}_s}, \P_{\alpha^{\cdot}_s}\big)\,\d s  \\
 & & + \; g\big(u, X^{\theta, X_\theta^{t,\xi,\alpha} ,\alpha,u}_T,\P_{X^{\theta,
X_\theta^{t,\xi,\alpha},\alpha, \cdot}_T}\big) \Big]\,\lambda(\d u)\\
 & \geq & \int_U\E\left[ \int_t^\theta f\big(u, X^{t,\xi,\alpha,u}_s,\alpha^u_s, \P_{X^{t,\xi,\alpha,\cdot}_s}, \P_{\alpha^{\cdot}_s}\big)\,\d s  \right]\,\lambda(\d u)+V\left(\theta, ({X^{t,\xi,\alpha,u}_s})_{u}\right)\;.
\enqs
Since the previous inequality holds for any $\alpha\in\Ac$, we get
\beqs
V(t,\xi) & \geq & \inf_{\alpha\in \Ac} \left\{ \int_U
\E\left[
\int_t^\theta
f\big(u,X_s^{t,\xi,\alpha,u},\alpha_s^u,
 \P_{X_s^{t,\xi,\alpha,\cdot}} ,
\P_{\alpha^{\cdot}_s}\big)\,\d s\right]\,\lambda(\d u) +  V\left(\theta,
(X_\theta^{t,\xi,\alpha,u})_{u}\right)
 \right\}.
\enqs
We turn to the reverse inequality. Fix $\alpha\in\Ac$ and $\eps>0$. From the definition of the value function $V$, there exists some $\alpha^\eps=(\alpha^{\eps,u})_u\in\Ac$ such that
\begin{align} \label{IntContEps}
J\big(\theta,
(X_\theta^{t,\xi,\alpha,u})_{u},\alpha^\eps\big) & \leq \;  V\big(\theta,
(X_\theta^{t,\xi,\alpha,u})_u\big) + \eps
\end{align} 
We next define the control $\bar \alpha ^\eps=(\bar \alpha^{\eps,u})_u$ by 
\beqs
\bar \alpha^{\eps,u}_s & = &  \alpha^{u}_s\mathds{1}_{[t,\theta)}(s)+\alpha^{\eps,u}_s \mathds{1}_{[\theta,T]}(s)\;,\quad s\in[0,T]\;,
\enqs
for $u\in U$ and $s\in[t,T]$. We obviously   have $\bar \alpha ^\eps\in\Ac$. Moreover, using the flow property \reff{FlowPPTY}, we have
\beqs
J(t,\xi,\bar \alpha ^\eps) & = &  \int_U\E\Big[ \int_t^\theta f(u, X^{t,\xi,\alpha,u}_s,\alpha^u_s, \P_{X^{t,\xi,\alpha,\cdot}_s}, \P_{\alpha^{\cdot}_s})\,\d s  \Big]\,\lambda(\d u)
 +  J\big(\theta,(X_\theta^{t,\xi,\alpha,u})_u,\alpha^\eps\big).
\enqs
From \eqref{IntContEps}, we get
\beqs
J(t,\xi,\bar \alpha ^\eps) & \leq &  \int_U\E\Big[ \int_t^\theta f(u, X^{t,\xi,\alpha,u}_s,\alpha^u_s, \P_{X^{t,\xi,\alpha,\cdot}_s}, \P_{\alpha^{\cdot}_s})\,\d s  \Big]\,\lambda(\d u)\\
 & & + \;  V\left(\theta,
(X_\theta^{t,\xi,\alpha,u})_u\right) +\eps, 
\enqs
and so 
\beqs
V(t,\xi) & \leq &  \int_U\E\Big[ \int_t^\theta f\big(u, X^{t,\xi,\alpha,u}_s,\alpha^u_s, \P_{X^{t,\xi,\alpha,\cdot}_s}, \P_{\alpha^{\cdot}_s}\big)\,\d s  \Big]\,\lambda(\d u) 
+ V\big(\theta,(X_\theta^{t,\xi,\alpha,u})_u\big) + \eps. 
\enqs
Taking the infimum over $\alpha\in\Ac$ and sending $\eps$ to 0, we get the result.
\end{proof}

\vspace{3mm}

The following corollary is an immediate consequence of this result and Theorem \ref{lawinv}.

\begin{Corollary}\label{CorDPPv}
    For $t\in[0,T]$ and $\mu\in L^2_\lambda(\Pc_2(\R^d))$ we have
    \begin{eqnarray*}
\upsilon(t,\mu) & = & \inf_{\alpha\in \Ac} \Big\{ \int_U
\E\Big[
\int_t^\theta
f\big(u,X_s^{t,\xi,\alpha,u},\alpha_s^u,
 \P_{X_s^{t,\xi,\alpha,\cdot}} ,  \P_{\alpha_s^{\cdot}} 
\big)\,\d s\Big]\,\lambda(\d u) \\
 & & \hspace{3cm}   + \;  \upsilon\big(\theta,\P_{X_\theta^{t,\xi,\alpha,\cdot}}\big) \Big\}
    \end{eqnarray*}
    for any $\theta\in[t,T]$,  
  $\xi\in\Ic_t$ satisfying $\P_{\xi^u}=\mu^u$ for $\lambda$-almost every $u\in U$.
\end{Corollary}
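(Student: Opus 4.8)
The plan is to read the statement off the flow-form dynamic programming principle of Theorem~\ref{DPP} by invoking the law-invariance property of Theorem~\ref{lawinv}. Fix $t\in[0,T]$, $\mu\in L^2_\lambda(\Pc_2(\R^d))$ and $\theta\in[t,T]$, and choose any $\xi=(\xi^u)_u\in\Ic_t$ with $\P_{\xi^u}=\mu^u$ for $\lambda$-almost every $u$; such a $\xi$ exists by the Skorokhod-type construction recalled after \eqref{defupsilon}. By the very definition \eqref{defupsilon} of $\upsilon$ we have $\upsilon(t,\mu)=V(t,\xi)$, so it only remains to rewrite the two terms on the right-hand side of the identity in Theorem~\ref{DPP}. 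The running-cost term already has the desired form, so the real content is the identification of the terminal term $V\big(\theta,(X_\theta^{t,\xi,\alpha,u})_u\big)$ with $\upsilon\big(\theta,\P_{X_\theta^{t,\xi,\alpha,\cdot}}\big)$.

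To carry this out, fix $\alpha\in\Ac$ and set $\zeta:=(X_\theta^{t,\xi,\alpha,u})_u$. First I would check that $\zeta\in\Ic_\theta$: each $\zeta^u=X_\theta^{t,\xi,\alpha,u}$ is $\Fc_\theta^u$-measurable because $X^u$ is $\F^u$-adapted (part~2 of Definition~\ref{solutionstate}); one has $\int_U\E[|\zeta^u|^2]\,\lambda(\d u)<\infty$ by the estimate \eqref{estimX} of Proposition~\ref{stimeeqstate}; and the map $u\mapsto\Lc\big(W^u_{[0,\theta]},Z^u,\zeta^u\big)$ is Borel measurable because, for $X=(X^u)_u\in\Sc_t$, the map $u\mapsto\Lc(X^u,W^u,Z^u)$ is Borel measurable (part~1 of Definition~\ref{solutionstate}), and composing with the continuous evaluation/restriction map $C^d_{[t,T]}\times C^\ell_{[0,T]}\times(0,1)\to C^\ell_{[0,\theta]}\times(0,1)\times\R^d$, $(w,w',z)\mapsto(w'_{[0,\theta]},z,w(\theta))$, and using the measurability criterion \eqref{meascriterion}, preserves measurability. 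Since $\P_{\zeta^u}=\P_{X_\theta^{t,\xi,\alpha,u}}$ for every $u$, Theorem~\ref{lawinv} together with \eqref{defupsilon} then yields $V(\theta,\zeta)=\upsilon\big(\theta,\P_{X_\theta^{t,\xi,\alpha,\cdot}}\big)$.

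Substituting this identity into the formula of Theorem~\ref{DPP} and taking the infimum over $\alpha\in\Ac$ gives exactly the claimed expression; as a byproduct the right-hand side does not depend on the particular $\xi$ chosen with marginals $\mu$. The only step requiring any care is the verification that the intermediate data $\zeta=(X_\theta^{t,\xi,\alpha,u})_u$ belongs to $\Ic_\theta$ — above all the measurability in $u$ of $\Lc(W^u_{[0,\theta]},Z^u,\zeta^u)$ — but this is built into the measurability property defining the solution space $\Sc_t$ (and was already implicitly used in the statement of Theorem~\ref{DPP}), so the corollary is indeed an immediate consequence of Theorems~\ref{DPP} and~\ref{lawinv}.
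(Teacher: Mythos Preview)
Your proof is correct and follows exactly the route the paper intends: the paper's own proof is the single line ``This follows from the definition of the function $\upsilon$ in \eqref{defupsilon}'', and your argument is a careful unpacking of that line via Theorem~\ref{DPP} and the law-invariance Theorem~\ref{lawinv}. The extra work you do—checking that $(X_\theta^{t,\xi,\alpha,u})_u\in\Ic_\theta$ so that the definition of $\upsilon$ applies at time $\theta$—is the only substantive point, and you handle it correctly using the measurability property built into $\Sc_t$.
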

\begin{proof} 
This follows from the definition of the function $\upsilon$ in \eqref{defupsilon}.  
\end{proof}



 
 


\section{It\^o formula} \label{secIto} 

\subsection{Derivatives on square integrable measure maps}

In our context, the It\^o formula describes the time derivative of the composition of a real function $v(\mu)$ of $\mu\in L^2_\lambda(\Pc_2(\R^d))$ and a map $s\mapsto \mu_s$ corresponding to the law of a family of stochastic processes $(X^u)_u$, namely $\mu_s^u=\P_{X^u_s}$.
It holds under regularity assumptions on $v$ - that may also depend explicitly on time - and requires the definition of derivatives for functions $v:L^2_\lambda(\Pc_2(\R^d))\to\R$ that we are now going to introduce. Given $\mu$ $\in$ $L_\lambda^2(\Pc_2(\R^d))$, and a measurable function $(u,x)$ $\in$ $U\times\R^d$ $\mapsto$  $\varphi(u,x)$, with quadratic growth in $x$, uniformly in $u$, we define the duality product:
\begin{align}
<\varphi,\mu> & := \; \int_U \int_{\R^d} \varphi(u,x) \mu^u(\d x) \lambda(\d u).     
\end{align}

\begin{Definition}
\label{Defderivative} Given a  function
$v:L^2_\lambda(\Pc_2(\R^d))\to \R$, we say that a measurable function 
\begin{align}
\dmu v:  L_\lambda^2(\P^2(\R^d)) \times U \times \R^d \; \ni \; (\mu,u,x)   \; \longmapsto \; \dmu v(\mu)(u,x)     
\end{align}
is the linear functional derivative of $v$ if
\begin{enumerate}
    \item for every compact set $K\subset L^2_\lambda(\Pc_2(\R^d))$ there exists a constant $C_K>0$ such that
    $$\left|
    \dmu v(\mu)(u,x) \right|\le C_K\, (1+|x|^2),  $$
    for every $u\in U$, $x\in \R^d$, $\mu\in K$;
    \item for every $\mu,\nu\in L^2_\lambda(\Pc_2(\R^d))$ we have
\begin{align}
v(\nu)-v(\mu) & = \int_0^1 < \dmu v(\mu + \theta(\nu-\mu))(.),\nu - \mu> \d \theta \\
 & = \;  \int_0^1 \int_U  \int_{\R^d} \dmu v(\mu + \theta(\nu-\mu)(u,x) \,(\nu^u-\mu^u)(\d x) 
 \lambda(\d u) \d \theta.
\end{align}
\end{enumerate}
\end{Definition}

\begin{Remark}
Under the additional condition that $\mu$ $\mapsto$ $\dmu v(\mu)(u,x)$ is continuous, the above definition  is equivalent to the existence of the Gateaux-derivative 
\begin{align}
\lim_{\eps \rightarrow 0} \frac{v(\mu + \eps(\nu-\mu) - v(\mu)}{\eps}  & = \;  < \dmu v(\mu)(.),\nu - \mu> \\
& = \; \int_U \int_{\R^d} \dmu v(\mu)(u,x) (\nu^u-\mu^u)(\d x) \lambda(\d u).   
\end{align}
\end{Remark}

The following definition encompasses the minimal assumptions on $v$ that are required to obtain the It\^o formula.

\begin{Definition}
We say that $v:[0,T]\times L^2_\lambda(\Pc_2(\R^d))\to \R$ is of class $\tilde C^{1,2}([0,T]\times L^2_\lambda(\Pc_2(\R^d))$ if \begin{enumerate}
    \item for every $\mu\in L^2_\lambda(\Pc_2(\R^d))$ the function 
$t\mapsto \upsilon(t,\mu)$ is continuously differentiable on $[0,T]$; we denote $\partial_t \upsilon(t,\mu)$ its time derivative;
\item 
for every $t\in[0,T]$
the derivative $\dmu \upsilon(t,\mu)(u,x)$ exists and it is measurable in all its arguments;
\item
$\dmu \upsilon(t,\mu)(u,x)$ is twice continuously differentiable on $\R^d$ as a function of $x$
and the gradient and the Hessian matrix
$$
\partial_x\dmu v  : [0,T]\times L^2_\lambda(\Pc_2(\R^d)) \times U\times \R^d \to\R^d,
\qquad
\partial_x^2\dmu v  
: [0,T]\times L^2_\lambda(\Pc_2(\R^d)) \times U\times \R^d \to\R^{d\times d}
$$
satisfy the following growth conditions: there exists a constant $C\ge 0$
such that
\begin{align}\label{growthpartialv}
\displaystyle
\left|\partial_x\dmu \upsilon(t,\mu)(u,x) \right| &\le\displaystyle C\left(1+|x| +  \bd(\mu,\delta_0)\right),
\\\label{growthpartial2v}
\displaystyle\left|\partial_x^2\dmu \upsilon(t,\mu)(u,x)\right| &\le C,
\end{align}
for every $t$ $\in$ $[0,T]$, $u\in U$, $x\in\R^d$, $\mu\in L^2_\lambda(\Pc_2(\R^d))$;
\item 
the function
$\partial_t\upsilon(t,\mu)$ is continuous on $[0,T]\times L^2_\lambda(\Pc_2(\R^d))$;
\item for every $u\in U$ and every compact set $H$ of   $\R^d$,
the functions  $\partial_x\dmu \upsilon(t,\mu)(u,x)$ and  $\partial^2_x\dmu \upsilon(t,\mu)(u,x)$ 
are continuous 
 functions of $(t,\mu)\in [0,T]\times L^2_\lambda(\Pc_2(\R^d))$, uniformly in $x\in H$; more precisely, whenever $t_n\to t$, ${\bf d}(\mu_n,\mu)\to 0$, $u\in U$ and $H\subset \R^d$ is compact
we have
$$\sup_{x\in H}\left|
\partial_x\dmu \upsilon(t_n,\mu_n)(u,x) 
-
\partial_x\dmu \upsilon(t,\mu)(u,x)\right|\to 0,
$$
$$\sup_{x\in H}\left|
\partial_x^2\dmu \upsilon(t_n,\mu_n)(u,x) 
-
\partial_x^2\dmu \upsilon(t,\mu)(u,x)\right|\to 0.
$$ 
\end{enumerate}
\end{Definition}

\begin{Remark}\label{unifcontpartialv}{\em
By standard arguments, if $K$ is a compact subset of $L^2_\lambda(\Pc_2(\R^d))$ then 
the functions  
$\partial_x\dmu \upsilon(t,\mu)(u,x)$ and 
$\partial^2_x\dmu \upsilon(t,\mu)(u,x)$  
are uniformly continuous functions of $(t,\mu)\in [0,T]\times K$, uniformly in $x\in H$;
namely: for every
$u\in U$ and compact sets $H\subset \R^d$, $K\subset L^2_\lambda(\Pc_2(\R^d))$, and every $\epsilon>0$, there exists $\delta>0$ such that
$$\left|
\partial_x\dmu \upsilon(t',\mu')(u,x) 
-
\partial_x\dmu \upsilon(t,\mu)(u,x) \right|+\left|
\partial_x^2\dmu \upsilon(t',\mu')(u,x)
-
\partial_x^2\dmu \upsilon(t,\mu)(u,x)\right|< \epsilon
$$ 
whenever $u\in U$, $x\in H$, $t,t'\in [0,T]$, $  \mu,\mu'\in K$, ${\bf d}(\mu,\mu)<\delta$, $|t-t'|<\delta$.
}\end{Remark}

\vspace{2mm}

We next give some examples of functions for which we compute the linear functional derivatives.


\begin{Example}
Since we are mainly interested in showing the form of the indicated functions $v$ we do not spell precise conditions on  
$\varphi$, $F$, $\phi_i$ used below ensuring that   $v$ is smooth.  Detailed assumptions are easily  determined.
\begin{enumerate}
\item[(i)] {\it Linear functions}
$$v(\mu)=
\int_U\int_{\R^d} \varphi(u,x)
\mu^u(\d x)\,\lambda(\d u),
$$
where $\varphi$ is a measurable function with quadratic growth in $x$. 
Then, 
\begin{align}
\dmu v(\mu)(u,x) &= \varphi(u,x).     
\end{align}


\item[(ii)] Collection of cylindrical functions: 
\begin{align}
v(\mu) &= \; \int_U     
F\Big(\int_{\R^d}\phi_1(u,x) \,\mu^u(\d x),\ldots, 
\int_{\R^d}\phi_k(u,x) \,\mu^u(\d x) \Big) \lambda(\d u). 
\end{align}
Denoting the partial derivatives of $F$ by $\partial_iF$, we have, 
\begin{align*}
\dmu v(\mu)(u,x)  &=  \sum_{i=1}^k \partial_iF
\bigg(
\int_{\R^d}\phi_1(u,x) \,\mu^u(\d x),\ldots,
\int_{\R^d}\phi_k(u,x) \,\mu^u(\d x) \bigg) \,\phi_i(u,x).
\end{align*}
\item[(iii)] Cylindrical functions of measure collection: 
\begin{align} 
v(\mu) & = \; F\Big(
\int_U\int_{\R^d}\phi_1(u,x) \,\mu^u(dx)\lambda(\d u),\ldots, 
\int_U\int_{\R^d}\phi_k(u,x) \,\mu^u(dx)\lambda(\d u)
\Big),
\end{align} 
where $F:\R^k\to \R$ and $\phi_i$ are real functions on $\R^d$. 
Then, 
\begin{align*}
\dmu v(\mu)(u,x) &= \sum_{i=1}^k \partial_iF
\Big(
\int_U\int_{\R^d}\phi_1(u,x) \,\mu^u(\d x)\lambda(\d u),\ldots,  \\ 
& \quad \quad \quad  \int_U\int_{\R^d}\phi_k(u,x) \,\mu^u(\d x)\lambda(\d u) \Big) \,\phi_i(u,x).
\end{align*}
\item[(iv)] $k$-interaction functions:
\begin{align}
v(\mu) &= \; \int_{U^k} 
\int_{(\R^d)^k} \varphi(u_1,\ldots,u_k,x_1,\ldots,x_k) 
\mu^{u_1}(\d x_1) \ldots \mu^{u_k}(\d x_k) \lambda(\d u_1)\ldots\lambda(\d u_k).      
\end{align}
Then, 
\begin{align}
\dmu v(\mu)(u,x) = \sum_{i=1}^k \int_{U^{k-1}} \int_{(\R^d)^{k-1}} 
\varphi(u_1,\ldots,u_{i-1},u,u_{i+1},\ldots,u_k,x_1,\ldots,x_{i-1},x,x_{i+1},\ldots,x_k) \\
  \mu^{u_1}(\d x_1) \ldots  \mu^{u_{1-1}}(\d x_{i-1}) 
 \mu^{u_{1+1}}(\d x_{i+1}) \ldots \mu^{u_k}(\d x_k) \lambda(\d u_1) \ldots 
 \lambda(\d u_{i-1}) \lambda(\d u_{i+1}) \ldots \lambda(\d u_k).  
\end{align}
\end{enumerate}
\end{Example}

\subsection{The chain rule}

We are now ready to present the It\^o formula (chain rule) that will be needed in the sequel. We suppose that the conditions ${\it (1)}$ and ${\it (2)}$ in Assumption \ref{assumptionbasic} hold true.

\begin{Theorem}\label{ito}
Suppose that, for $u\in U$, $b^u$ and $\sigma^u$  are stochastic processes
defined
 on $[0,T]$, with values in $\R^d$ and $\R^{d\times \ell}$ respectively,
progressively measurable with respect to $\F^u$;  let $X_0^u$ be an $\Fc_0^u$-measurable random variable in $\R^d$. Also assume that 
 $(u,t)\mapsto \P_{(X_0^u,b^u_t,\sigma^u_t)}$ is Borel measurable and
 \begin{equation}
\int_U\Big\{\E[|X_0^u|^2]+\int_0^T\E\left[|b^u_t|^2+|\sigma^u_t|^2 \right] \,dt\Big\}\,\lambda(\d u) \; < \; \infty.
 \end{equation}
Define 
\begin{align} 
X_t^u& =\; X_0^u+\int_0^t b^u_s\,ds+\int_0^t\sigma^u_s\,dW_s^u,
\qquad t\in [0,T], \quad u \in U. 
\end{align} 
 Suppose  $v\in \tilde C^{1,2}([0,T]\times L^2_\lambda(\Pc_2(\R^d)))$. Then, denoting $\mu^u_t=\P_{X_t^u}$ and $\mu_t=(\mu^u_t)_u$, we have
\begin{equation}
\label{Itoformulageneral}
    \begin{array}{l}\displaystyle
\upsilon(t,\mu_t)-v(0,\mu_0) \; = \; \int_0^t \Big\{
\partial_tv(s,\mu_s)\, + 
\\ \displaystyle
 \; \int_U   \E\Big[\partial_x\dmu v (s,\mu_s)(u, X_s^u) \cdot b_s^u + 
\frac{1}{2}\partial_x^2\dmu v (s,\mu_s)(u, X_s^u): \sigma_s^u(\sigma_s^u)\trans \Big]\,\lambda(\d u) \Big\}
   \,\d s  
\end{array}
\end{equation}
for every $t\in [0,T]$.  
\end{Theorem}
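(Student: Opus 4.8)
The plan is to prove \eqref{Itoformulageneral} by time discretization, adapting the classical argument for It\^o's formula along a flow of measures (as in \cite{buckdahn_mean-field_2017} and Chapter~5 of \cite{cardel18a}) to the present setting, where the state variable is the whole collection $\mu_s=(\P_{X^u_s})_{u\in U}$ and all computations take place in $L^2_\lambda(\Pc_2(\R^d))$. First I would record the facts that make the statement meaningful: from the integrability hypothesis and the Burkholder--Davis--Gundy inequality one gets $\E[\sup_{s\in[0,T]}|X^u_s|^2]<\infty$ for $\lambda$-a.e. $u$ and $\int_U\E[\sup_{s\in[0,T]}|X^u_s|^2]\,\lambda(\d u)<\infty$; together with the assumed measurability of $(u,t)\mapsto\P_{(X^u_0,b^u_t,\sigma^u_t)}$ (and the auxiliary measurability results of the appendix) this yields that $u\mapsto\mu^u_s$ is measurable, that $\mu_s\in L^2_\lambda(\Pc_2(\R^d))$, and --- since $\Wc_2(\mu^u_s,\mu^u_{s'})^2\le\E[|X^u_s-X^u_{s'}|^2]$ and $\int_U\E[\sup_s|X^u_s|^2]\,\lambda(\d u)<\infty$ --- that $s\mapsto\mu_s$ is continuous from $[0,T]$ to $L^2_\lambda(\Pc_2(\R^d))$, by dominated convergence in the $\lambda$-integral. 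Using the convexity of $\nu\mapsto\Wc_2(\nu,\rho)^2$, it follows that $\Kc_0:=\{\mu_s:s\in[0,t]\}$ and the slightly larger set $\Kc:=\{(1-\theta)\mu_r+\theta\mu_{r'}:r,r'\in[0,t],\,\theta\in[0,1]\}$, where $(1-\theta)\mu_r+\theta\mu_{r'}$ denotes the collection $((1-\theta)\mu^u_r+\theta\mu^u_{r'})_u$, are compact subsets of $L^2_\lambda(\Pc_2(\R^d))$.

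Given a partition $\pi:0=t_0<\dots<t_n=t$ of mesh $|\pi|$, the next step is the telescoping identity
\begin{align*}
v(t,\mu_t)-v(0,\mu_0)=\sum_{k=0}^{n-1}\big[v(t_{k+1},\mu_{t_{k+1}})-v(t_k,\mu_{t_{k+1}})\big]+\sum_{k=0}^{n-1}\big[v(t_k,\mu_{t_{k+1}})-v(t_k,\mu_{t_k})\big].
\end{align*}
Since $t\mapsto v(t,\nu)$ is $C^1$, the $k$-th term of the first sum equals $\int_{t_k}^{t_{k+1}}\partial_t v(s,\mu_{t_{k+1}})\,\d s$; using the continuity, hence uniform continuity on the compact $[0,t]\times\Kc$, of $\partial_t v$, together with $\mu_{t_{k+1}}\to\mu_s$ as $|\pi|\to0$ uniformly for $s\in[t_k,t_{k+1})$, this sum converges to $\int_0^t\partial_t v(s,\mu_s)\,\d s$. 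For the second sum, Definition~\ref{Defderivative} applied along the segment joining $\mu_{t_k}$ and $\mu_{t_{k+1}}$ gives, with $m_{k,\theta}:=\mu_{t_k}+\theta(\mu_{t_{k+1}}-\mu_{t_k})\in\Kc$,
\begin{align*}
v(t_k,\mu_{t_{k+1}})-v(t_k,\mu_{t_k})=\int_0^1\int_U\int_{\R^d}\dmu v(t_k,m_{k,\theta})(u,x)\,(\mu^u_{t_{k+1}}-\mu^u_{t_k})(\d x)\,\lambda(\d u)\,\d\theta.
\end{align*}

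Now for fixed $k,\theta,u$ the map $x\mapsto\dmu v(t_k,m_{k,\theta})(u,x)$ is $C^2$ with the growth bounds \eqref{growthpartialv}--\eqref{growthpartial2v} (and $|\dmu v(t_k,m_{k,\theta})(u,x)|\le C_{\Kc}(1+|x|^2)$ by item (1) of Definition~\ref{Defderivative}), so I apply the classical finite-dimensional It\^o formula to $s\mapsto\dmu v(t_k,m_{k,\theta})(u,X^u_s)$ on $[t_k,t_{k+1}]$, localizing by $\tau^u_N:=\inf\{s\ge t_k:|X^u_s|\ge N\}$ so that the stochastic integral is a genuine martingale, take expectations, and let $N\to\infty$. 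This last passage is legitimate because the drift and trace terms are dominated, by \eqref{growthpartialv}--\eqref{growthpartial2v}, by $C(1+|X^u_s|+\bd(m_{k,\theta},\delta_0))\,|b^u_s|+C|\sigma^u_s|^2$, which is $\P$-integrable over $[t_k,t_{k+1}]\times\Omega$ by Cauchy--Schwarz (using $\E[\sup_s|X^u_s|^2]<\infty$ and $\E\int_0^T(|b^u_s|^2+|\sigma^u_s|^2)\,\d s<\infty$), and because $|\dmu v(t_k,m_{k,\theta})(u,X^u_s)|\le C_\Kc(1+\sup_s|X^u_s|^2)$ is integrable. We obtain
\begin{align*}
&\int_{\R^d}\dmu v(t_k,m_{k,\theta})(u,x)\,(\mu^u_{t_{k+1}}-\mu^u_{t_k})(\d x)\\
&\qquad=\E\Big[\int_{t_k}^{t_{k+1}}\!\Big(\partial_x\dmu v(t_k,m_{k,\theta})(u,X^u_s)\cdot b^u_s+\tfrac12\,\partial_x^2\dmu v(t_k,m_{k,\theta})(u,X^u_s):\sigma^u_s(\sigma^u_s)\trans\Big)\,\d s\Big].
\end{align*}
Summing over $k$ and using Fubini (justified uniformly in $\pi$ by the same integrability), the second telescoping sum equals $\int_0^1\int_U\E\big[\int_0^t\Psi^\pi_\theta(s,u)\,\d s\big]\,\lambda(\d u)\,\d\theta$, where $\Psi^\pi_\theta(s,u):=\partial_x\dmu v(t_k,m_{k,\theta})(u,X^u_s)\cdot b^u_s+\tfrac12\,\partial_x^2\dmu v(t_k,m_{k,\theta})(u,X^u_s):\sigma^u_s(\sigma^u_s)\trans$ for $s\in[t_k,t_{k+1})$.

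It remains to pass to the limit $|\pi|\to0$ in this last expression. For $s\in[t_k,t_{k+1})$ one has $|t_k-s|\le|\pi|$ and, by convexity of $\Wc_2^2(\cdot,\rho)$, $\bd(m_{k,\theta},\mu_s)^2\le(1-\theta)\bd(\mu_{t_k},\mu_s)^2+\theta\bd(\mu_{t_{k+1}},\mu_s)^2\le\omega(|\pi|)^2$, with $\omega$ the modulus of continuity of $s\mapsto\mu_s$ on $[0,t]$; hence $(t_k,m_{k,\theta})\to(s,\mu_s)$ uniformly in $k,\theta$. By Remark~\ref{unifcontpartialv}, on the compact set $[0,t]\times\Kc$ the maps $\partial_x\dmu v$ and $\partial_x^2\dmu v$ are uniformly continuous in $(t,\mu)$, uniformly for $x$ in any fixed ball $B_R$. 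To absorb the unboundedness of $X^u_s$, I split each integrand over $\{|X^u_s|\le R\}$ and $\{|X^u_s|>R\}$: on the first event $\Psi^\pi_\theta(s,u)$ converges to $\partial_x\dmu v(s,\mu_s)(u,X^u_s)\cdot b^u_s+\tfrac12\,\partial_x^2\dmu v(s,\mu_s)(u,X^u_s):\sigma^u_s(\sigma^u_s)\trans$ uniformly, while on the second the growth bounds \eqref{growthpartialv}--\eqref{growthpartial2v} and Cauchy--Schwarz bound the total contribution (after $\int_0^1\int_U\E\int_0^t$) by a quantity that tends to $0$ as $R\to\infty$ uniformly in $\pi$, again thanks to $\int_U\E[\sup_s|X^u_s|^2]\,\lambda(\d u)<\infty$ and $\int_U\E\int_0^T(|b^u_s|^2+|\sigma^u_s|^2)\,\d s\,\lambda(\d u)<\infty$. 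Dominated convergence then shows that the limit of the second sum is $\int_0^1\,\d\theta$ times the quantity that appears, besides $\int_0^t\partial_t v(s,\mu_s)\,\d s$, on the right-hand side of \eqref{Itoformulageneral}; since that quantity does not involve $\theta$, the $\theta$-integral drops out, and a final use of Fubini's theorem to write $\int_U\E\int_0^t(\,\cdot\,)$ as $\int_0^t\int_U\E(\,\cdot\,)$ produces exactly \eqref{Itoformulageneral}. I expect the main obstacle to be this final limit: Remark~\ref{unifcontpartialv} only controls $\partial_x\dmu v,\partial_x^2\dmu v$ locally in $x$, so the unboundedness of $X^u_s$ must be handled by a truncation whose error is dominated, uniformly over the partitions, by the polynomial growth of these derivatives combined with the a priori $L^2$ estimates on $X,b,\sigma$; the accompanying technical difficulty is the bookkeeping of $u$-measurability of all the laws involved and the repeated use of Fubini's theorem.
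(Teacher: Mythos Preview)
Your proposal is correct and follows essentially the same strategy as the paper: a telescoping decomposition over a partition, the time increments handled via $\partial_t v$, the measure increments expanded with the linear derivative along the segment, then classical It\^o applied to $x\mapsto\dmu v(t_k,m_{k,\theta})(u,x)$, followed by a limit as the mesh vanishes using the compactness of $\Kc$ and dominated convergence.

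There are only two minor technical differences worth noting. First, to kill the stochastic integral the paper simply checks that the square root of its quadratic variation is integrable (via \eqref{growthpartialv} and Cauchy--Schwarz), whereas you localize with $\tau^u_N$ and then let $N\to\infty$; both are standard and equivalent here. Second, in the final limit you truncate in space with a fixed ball $B_R$ and control the tail on $\{|X^u_s|>R\}$ uniformly in the partition. The paper instead fixes $(u,\theta,\omega)$ and applies Remark~\ref{unifcontpartialv} directly with the \emph{pathwise} compact set $H^{u,\omega}:=\{X^u_s(\omega):s\in[0,T]\}$, obtaining pointwise convergence of the integrand in one stroke, and then applies dominated convergence (three times: in $\omega$, in $u$, and in $\theta$). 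This avoids your $R$-splitting and the associated double limit $|\pi|\to0$, $R\to\infty$, but the net effect is the same.
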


\begin{Remark}
We note that the measurability assumption on  $\P_{(X_0^u,b^u_t,\sigma^u_t)}$ implies that the maps
\begin{align} 
u \; \mapsto \;  \E[|X_0^u|^2], & \qquad
(u,t) \; \mapsto \; \E\big[|b^u_t|^2+|\sigma^u_t|^2 \big]
\end{align} 
\begin{align} 
(u,t) & \mapsto \; 
\E\Big[\partial_x\dmu v (s,\mu_s)(u,X_s^u) \cdot b_s^u +\frac{1}{2}\partial_x^2\dmu v (s,\mu_s)(u, X_s^u): \sigma_s^u(\sigma_s^u)\trans\Big]
\end{align}
are measurable, even if there is no measurability condition imposed on the random elements 
$X^u_0,b^u_t,\sigma^u_t$ as functions of $u\in U$: this is important for our future applications.
\end{Remark} 

\vspace{2mm}

In the following lemma we collect some preliminary facts which are used in the proof of Theorem \ref{ito}.

\begin{Lemma}
\begin{enumerate}
\item[a)] For $\nu_1,\nu_2,\mu_1,\mu_2\in L^2_\lambda(\Pc_2(\R^d))$, $\theta\in[0,1]$ we have
\begin{equation}
    \label{Wconv}
\bd ((1-\theta)\nu_1+\theta \nu_2,(1-\theta)\mu_1+\theta\mu_2)^2\le (1-\theta)\bd (\nu_1, \mu_1 )^2+\theta \bd(\nu_2, \mu_2)^2.
\end{equation}
In particular if $\nu_1=\nu_2=:\nu$ then
\begin{equation}
    \label{Wconv2}
\bd (\nu,(1-\theta)\mu_1+\theta\mu_2)^2\le (1-\theta)\bd (\nu, \mu_1 )^2+\theta \bd(\nu, \mu_2)^2.
\end{equation}
    \item [b)] We have
$\int_U \E\big[\sup_{t \in [0,T]}|X_t^u|^2\big]\,\lambda(\d u) \; < \; \infty$, 
    and the map $t\mapsto \mu_t$ is continuous from $[0,T]$ to $L^2_\lambda(\Pc_2(\R^d))$.
    \item[c)] The set
\begin{equation}
    \label{Kcompact}
K:=\{(1-\theta) \mu_t+\theta\mu_s\,:\, \theta\in [0,1];\,s,t\in [0,T]\}
\end{equation}
    is compact in $L^2_\lambda(\Pc_2(\R^d))$.    
\end{enumerate}    
\end{Lemma}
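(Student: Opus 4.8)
The plan is to handle the three items in order. Item (a) is the standard convexity of the quadratic transport cost under mixtures, lifted fibrewise in $u$; item (b) combines the classical $L^2$ estimate for It\^o processes with dominated convergence; and item (c) reduces to exhibiting $K$ as the continuous image of a compact set, the only mildly delicate point being continuity in the mixture parameter, which I would obtain through an explicit coupling. For (a), fix $\theta\in[0,1]$ and $u\in U$, choose optimal couplings $\gamma_1^u\in\Pi(\nu_1^u,\mu_1^u)$ and $\gamma_2^u\in\Pi(\nu_2^u,\mu_2^u)$ for the quadratic cost, and note that $(1-\theta)\gamma_1^u+\theta\gamma_2^u$ is a coupling of $(1-\theta)\nu_1^u+\theta\nu_2^u$ and $(1-\theta)\mu_1^u+\theta\mu_2^u$; integrating $|x-y|^2$ against it gives $\Wc_2\big((1-\theta)\nu_1^u+\theta\nu_2^u,(1-\theta)\mu_1^u+\theta\mu_2^u\big)^2\le(1-\theta)\Wc_2(\nu_1^u,\mu_1^u)^2+\theta\Wc_2(\nu_2^u,\mu_2^u)^2$. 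All the convex combinations lie in $L^2_\lambda(\Pc_2(\R^d))$ (measurability in $u$ via the criterion \eqref{meascriterion}, integrability from the bound just displayed), so integrating this inequality against $\lambda(\d u)$ yields \eqref{Wconv}, and \eqref{Wconv2} is the case $\nu_1=\nu_2$.

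For (b), I would write $X_t^u=X_0^u+\int_0^t b_s^u\,\d s+\int_0^t\sigma_s^u\,\d W_s^u$ and apply the Cauchy--Schwarz inequality to the drift and Doob's $L^2$-inequality to the stochastic integral, obtaining a constant $C=C(T)$ with $\E\big[\sup_{t\in[0,T]}|X_t^u|^2\big]\le C\big(\E[|X_0^u|^2]+\E\int_0^T(|b_s^u|^2+|\sigma_s^u|^2)\,\d s\big)$; integrating against $\lambda(\d u)$ and using the standing integrability hypothesis gives the first assertion (measurability of $u\mapsto\E[\sup_t|X_t^u|^2]$ and of $u\mapsto\mu_t^u$ following by the same argument as in the remark after the theorem statement, and in particular $\mu_t\in L^2_\lambda(\Pc_2(\R^d))$ for each $t$). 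For continuity, for $0\le s\le t\le T$ the pair $(X_s^u,X_t^u)$ is a coupling of $(\mu_s^u,\mu_t^u)$, so $\Wc_2(\mu_s^u,\mu_t^u)^2\le\E[|X_t^u-X_s^u|^2]\le 2T\,\E\int_s^t|b_r^u|^2\,\d r+2\,\E\int_s^t|\sigma_r^u|^2\,\d r$; integrating against $\lambda(\d u)$ gives $\bd(\mu_s,\mu_t)^2\le 2T\int_U\E\int_s^t|b_r^u|^2\,\d r\,\lambda(\d u)+2\int_U\E\int_s^t|\sigma_r^u|^2\,\d r\,\lambda(\d u)$, and both terms tend to $0$ as $|t-s|\to0$ by dominated convergence, the integrability hypothesis supplying the dominating function.

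For (c), by (b) the map $r\mapsto\mu_r$ is continuous on $[0,T]$ and $M:=\sup_{r\in[0,T]}\int_U\int_{\R^d}|x|^2\,\mu_r^u(\d x)\,\lambda(\d u)<\infty$. I would show that $\Psi:[0,1]\times[0,T]^2\to L^2_\lambda(\Pc_2(\R^d))$, $\Psi(\theta,s,t)=(1-\theta)\mu_t+\theta\mu_s$, is continuous; then $K=\Psi\big([0,1]\times[0,T]^2\big)$ is compact as the continuous image of a compact set. Given $(\theta_n,s_n,t_n)\to(\theta,s,t)$, the triangle inequality bounds $\bd\big(\Psi(\theta_n,s_n,t_n),\Psi(\theta,s,t)\big)$ by $\bd\big((1-\theta_n)\mu_{t_n}+\theta_n\mu_{s_n},(1-\theta)\mu_{t_n}+\theta\mu_{s_n}\big)+\bd\big((1-\theta)\mu_{t_n}+\theta\mu_{s_n},(1-\theta)\mu_t+\theta\mu_s\big)$, and the second summand tends to $0$ by (a) and (b). For the first, assuming $\theta\le\theta_n$ (the other case is symmetric), I would couple, for each $u$, the measures $(1-\theta_n)\mu_{t_n}^u+\theta_n\mu_{s_n}^u$ and $(1-\theta)\mu_{t_n}^u+\theta\mu_{s_n}^u$ through the pair $\big(\mathds{1}_{\{V\le\theta_n\}}Y_{s_n}+\mathds{1}_{\{V>\theta_n\}}Y_{t_n},\;\mathds{1}_{\{V\le\theta\}}Y_{s_n}+\mathds{1}_{\{V>\theta\}}Y_{t_n}\big)$, with $V$ uniform on $(0,1)$ independent of the independent pair $Y_{s_n}\sim\mu_{s_n}^u$, $Y_{t_n}\sim\mu_{t_n}^u$; the two variables differ only on $\{\theta<V\le\theta_n\}$, so $\Wc_2(\cdot,\cdot)^2\le(\theta_n-\theta)\,\E|Y_{t_n}-Y_{s_n}|^2\le 2(\theta_n-\theta)\big(\int_{\R^d}|x|^2\mu_{t_n}^u(\d x)+\int_{\R^d}|x|^2\mu_{s_n}^u(\d x)\big)$, and integrating against $\lambda(\d u)$ makes the first summand at most $\sqrt{4M\,|\theta_n-\theta|}\to0$. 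Hence $\Psi$ is continuous and $K$ is compact. The one step that needs a genuine idea rather than routine estimates is this last coupling: since $L^2_\lambda(\Pc_2(\R^d))$ is not a vector space, continuity of $\Psi$ in $\theta$ is not a consequence of linearity and must be argued by hand; everything else is standard.
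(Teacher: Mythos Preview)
Your proof is correct. Parts (a) and (b) follow the paper's argument essentially verbatim: mixture of optimal couplings for (a), and the standard H\"older/Doob estimate plus the coupling $(X_s^u,X_t^u)$ for (b).

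For part (c) you take a genuinely different route from the paper on the one nontrivial point, continuity of the mixture in the parameter $\theta$. The paper invokes Kantorovich duality: writing $\Wc_2(\eta,\rho)^2$ as a supremum over dual potentials $(f,g)$ with $f(x)+g(y)\le|x-y|^2$, it deduces the sharp inequality
\[
\Wc_2\big((1-\theta')\mu^u+\theta'\nu^u,\,(1-\theta)\mu^u+\theta\nu^u\big)^2\le|\theta-\theta'|\,\Wc_2(\mu^u,\nu^u)^2,
\]
and then integrates in $u$. Your argument is more elementary and entirely primal: you build an explicit coupling via a uniform auxiliary variable $V$ and independent samples $Y_{s_n},Y_{t_n}$, so that the two mixture realisations differ only on $\{\theta<V\le\theta_n\}$, giving $\Wc_2^2\le(\theta_n-\theta)\,\E|Y_{s_n}-Y_{t_n}|^2$. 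This yields a weaker constant (second moments rather than $\Wc_2(\mu^u,\nu^u)^2$) but is entirely sufficient for continuity, and it avoids any appeal to duality theory. Organisationally, the paper parametrises $K$ over $K_1\times K_1\times[0,1]$ with $K_1=\{\mu_t:t\in[0,T]\}$ and checks continuity of the abstract mixture map, whereas you parametrise directly over $[0,1]\times[0,T]^2$; both work equally well. Your closing remark that continuity in $\theta$ is the only step requiring an idea beyond routine estimates, because $L^2_\lambda(\Pc_2(\R^d))$ is not a vector space, is exactly the right diagnosis.
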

\begin{proof}
$(a)$ For every $u\in U$ and $i=1,2$ let $\gamma_i^u$ be an optimal coupling between $\nu^u_i$ and $\mu_i^u$ for the square distance cost used in the definition of the Wasserstein distance $\Wc_2$. 
Then $(1-\theta)\gamma^u_1+\theta\gamma_2^u$ is a coupling between $(1-\theta)\nu_1^u+\theta\nu_2^u$ and $(1-\theta)\mu_1^u+\theta\mu_2^u$. It follows that
\begin{align*}
\Wc_2\Big((1-\theta)\nu_1^u+\theta\nu_2^u,(1-\theta)\mu_1^u+\theta\mu_2^u\Big)^2
& \le \; 
\int_{\R^d\times \R^d}|x-y|^2\,
[(1-\theta)\gamma^u_1+\theta\gamma_2^u](\d x\,\d y) \\
& = \; 
(1-\theta)\Wc_2(\nu_1^u, \mu_1^u )^2+\theta \Wc_2(\nu^u_2, \mu_2^u )^2.
\end{align*}
Integrating with respect to $\lambda(\d u)$ and sending $\eps$ to 0 gives the required conclusion.

\medskip

\noindent $(b)$ We have
\begin{align} 
\sup_{t\in [0,T]}|X_t^u|^2 &\le \;  3\,|X_0^u|^2+ 3 \left(\int_0^T|b_s^u|\,\d s\right)^2+3\sup_{t\in [0,T]}  \left|\int_0^t\sigma_s^u\, \d W^u_s\right|^2
\end{align} 
and by the H\"older and Doob inequality and the It\^o isometry we have, for some absolute constant $c>0$,
\begin{align} 
\E\Big[
\sup_{t\in [0,T]}|X_t^u|^2\Big] &\le \;  c\,\E[|X_0^u|^2]+ c\,T \,\E\Big[\int_0^T|b_s^u|^2\,\d s\Big]
+ c\,\E \Big[\int_0^T|\sigma_s^u|^2\,\d s \Big].
\end{align} 
Integrating with respect to $\lambda(\d u)$, we obtain
$\int_U\E[\sup_{t \in [0,T]}|X_t^u|^2]\,\lambda(\d u)<\infty$. 
This shows that  $\mu_t\in L^2_\lambda(\Pc_2(\R^d))$ for every  $t\in [0,T]$. To prove the required continuity we note that, by similar passages as before, for $0\le s\le t\le T$,
 \begin{align*}
\bd(\mu_t,\mu_s)^2&=\int_U\Wc_2(\mu_t^u,\mu_s^u)^2\,\lambda(\d u) \;\le \; 
\int_U\E[|X_t^u-X_s^u|^2]\,\lambda(\d u) \\
&\le \;  c  \int_U\Big\{(t-s)\,\E\Big[\int_s^t|b_r^u|^2\,\d r\Big] + 
\E \Big[\int_s^t|\sigma_r^u|^2\,\d r\Big] \Big\}\,\lambda(\d u),
\end{align*}
 which tends to $0$ as $t-s\to 0$.

\medskip

\noindent $(c)$ As a consequence of the previous point,
the set $K_1:=\{\mu_t\,:\, t\in [0,T]\}$ is compact. 
We note that $K$ is the image of the compact set 
$K_1\times K_1\times [0,1]$ under the mapping $(\mu,\nu,\theta)\mapsto (1-\theta) \mu+\theta\nu$. 
To prove compactness of  $K$ it is therefore enough to show that this mapping is continuous from 
$K_1\times K_1\times [0,1]$ to $L^2_\lambda(\Pc_2(\R^d))$.
We will show that
\begin{enumerate}
    \item[(i)]
    the map $(\mu,\nu)\mapsto (1-\theta) \mu+\theta\nu$ is continuous, uniformly in  $\theta\in[0,1]$.
    \item[(ii)]
    the map $\theta\mapsto(1-\theta) \mu+\theta\nu$
     is continuous, for every fixed   $(\mu,\nu)\in K_1\times K_1$.
\end{enumerate}
These two statements easily imply the required continuity.

To prove (i) we note that if $\bd(\mu_n,\mu)\to 0$, $\bd(\nu_n,\nu)\to 0$, it follows from \eqref{Wconv} that
$$
\bd ((1-\theta)\mu+\theta \nu,(1-\theta)\mu_n+\theta\nu_n)^2\le (1-\theta)\bd (\mu, \mu_n )^2+\theta \bd(\nu, \mu_n)^2
\le \bd (\mu, \mu_n )^2+ \bd(\nu, \mu_n)^2
$$
which tends to $0$ uniformly in $\theta$.

To prove (ii) we recall the Kantorovich duality:  given $\eta,\rho\in\Pc_2(\R^d)$, the squared Wasser\-stein distance $\Wc_2(\eta,\rho)^2$ equals the supremum of
$$
\int_{\R^d}f(x)\,\eta(\d x)+ 
\int_{\R^d}g(y)\,\rho(\d y)$$
for $f,g$ varying in the set of bounded Lipschitz function on $\R^d$ satisfying the constraint
\begin{equation}
\label{dualconstraint}
f(x)+g(y)\le |x-y|^2,\qquad x,y\in\R^d.
\end{equation}
Given $\mu,\nu\in L^2_\lambda(\Pc_2(\R^d))$ and $\theta,\theta'\in[0,1]$  we apply the duality result to $\eta=(1-\theta')\mu^u+\theta'\nu^u$ and $\rho= (1-\theta)\mu^u+\theta\nu^u$ for fixed $u\in U$. 
We estimate the supremum of
$$\begin{array}
    {l}
\displaystyle
\int_{\R^d}f(x)\,[(1-\theta')\mu^u+\theta'\nu^u](\d x)+ 
\int_{\R^d}g(y)\,[(1-\theta)\mu^u+\theta\nu^u](\d y)
    \\
    \displaystyle
 =(1-\theta)
 \int_{\R^d}(f+g)\,\d\mu^u
 +\theta'
 \int_{\R^d}(f+g)\,\d\nu^u
 +( \theta-\theta')\Big(
\int_{\R^d}f\,\d\mu^u
+
\int_{\R^d}g\,\d\nu^u\Big) . 
\end{array}$$
From \eqref{dualconstraint} evaluated at $x=y$ it follows that
$f+g\le 0$, so that the first two integrals are also $\le 0$. If $\theta>\theta'$ we also have
\begin{align} 
( \theta-\theta')\left(
\int_{\R^d}f\,\d\mu^u + \int_{\R^d}g\,\d\nu^u\right) &  \le \;  ( \theta-\theta')\,\Wc_2(\mu^u,\nu^u)^2 ,
\end{align} 
and by duality it follows that
\begin{align} 
\Wc_2\Big((1-\theta')\mu^u+\theta'\nu^u,(1-\theta)\mu^u+\theta\nu^u\Big)^2 &\le \; 
| \theta-\theta'| \,\Wc_2(\mu^u,\nu^u)^2 .
\end{align} 
Interchanging $\theta$ and $\theta'$ the same inequality also holds for $\theta<\theta'$ and so for every $\theta,\theta'\in[0,1]$.

Integrating with respect to $\lambda(\d u)$ we conclude that
$$
\bd\Big((1-\theta')\mu+\theta'\nu,(1-\theta)\mu+\theta\nu\Big)^2\le
| \theta-\theta'|\, \bd(\mu,\nu)^2,
$$
which gives the required conclusion.
\end{proof}

\bigskip

\noindent {\bf Proof of Theorem \ref{ito}}. 
We first verify that the terms in the It\^o formula are well defined.
By \eqref{growthpartialv}, and recalling that $s\mapsto\mu_s$ is continuous in $L^2_\lambda(\Pc_2(\R^d))$, 
\begin{align*}  
\int_{0}^{T}\Big|\partial_x  \dmu v(s,\mu_s)(u,X_{s}^u) \cdot b_s^u \Big|\,\d s
& \le \;   C\, 
\int_{0}^{T}
 \Big(
 \left(1+|X_{s}^u|+
 \bd  (  \mu_{s},\delta_0)\right) \,|b_s^u|\Big)\,\d s \\
&\le \; C\, 
 \Big(1+\sup_{s\in [0,T]}|X_{s}^u|+\sup_{s\in [0,T]}
 \bd  (  \mu_s,\delta_0)\Big) \int_{0}^{T} |b_s^u|\,\d s.
\end{align*}
The right-hand side does not depend on $\theta$ and we have 
\begin{align} 
\int_U\E \Big[
 \Big(1+\sup_{s\in [0,T]}|X_{s}^u|+\sup_{s\in [0,T]}
 \bd  (  \mu_s,\delta_0)\Big) \int_{0}^{T}
 |b_s^u|\,\d s \Big] \,\lambda(\d u) & < \; \infty
\end{align}  
by the H\"older inequality, since 
$$
\int_U\E\Big[
 \sup_{s\in [0,T]}|X_{s}^u|^2\Big] \,\lambda(\d u) \; < \; \infty,
\qquad
\int_U\E\Big[
\int_{0}^{T}|b_s^u|^2\,\d s
 \Big] \,\lambda(\d u) \; < \; \infty.
$$
In a similar and simpler way, using the boundedness condition
\eqref{growthpartial2v} instead of \eqref{growthpartialv}, we can check that
\begin{align} 
\int_U\E \Big[
\int_{0}^{T}\Big|\frac{1}{2}\partial_x^2  \dmu v(s,\mu_s)(u,X_{s}^u): \sigma_s^u(\sigma_s^u)\trans 
\Big|\,\d s
\Big]\,\lambda(\d u)
<\infty.
\end{align}  

\vspace{2mm}

In order to prove the formula we
fix $t$ and, for every positive integer $n$, we choose a subdivision of the interval $[0,t]$ by points $t_k^n:=kt/n$, $k=0,1,\ldots, n-1$. We evaluate the difference
\begin{align}
\upsilon(t_{k+1}^n,\mu_{t^n_{k+1}})-\upsilon(t^n_{k},\mu_{t_{k}^n})
&= \; 
\upsilon(t^n_{k+1},\mu_{t_{k+1}^n})-\upsilon(t_{k}^n,\mu_{t_{k+1}^n})
+\upsilon(t_{k}^n,\mu_{t_{k+1}^n})-\upsilon(t^n_{k},\mu_{t_{k}^n}) \nonumber  \\
&=\; 
\int_{t_k^n}^{t_{k+1}^n}
\partial_tv(s, \mu_{t_{k+1}^n})\,\d s
+\upsilon(t_{k}^n,\mu_{t_{k+1}^n})-\upsilon(t^n_{k},\mu_{t_{k}^n}). \label{firstdifference}
\end{align}
We note that the last difference can be written, by the definition of the derivative $\dmu$, as
\begin{equation}\label{secondincrement}
\begin{array}{l}\displaystyle
\upsilon(t^n_{k},\mu_{t_{k+1}^n}) - \upsilon(t^n_{k},\mu_{t_{k}^n}) \; = \; 
\int_U  \int_0^1 \int_{\R^d} \dmu v\big(t_k^n,\mu_{\theta,k,n}\big)(u,x)\,
(\mu_{t_{k+1}^n}^u-\mu_{t_{k}^n}^u)(\d x) \,\d\theta \, \lambda(\d u) \\ 
\displaystyle = \; 
\int_U  \int_0^1 \E\Big[  \dmu v\big(t_k^n,\mu_{\theta,k,n}\big)(u,X^u_{t_{k+1}^n}) 
- \dmu v\big(t_k^n,\mu_{\theta,k,n}\big)(u,X^u_{t_{k}^n}) \Big]\,\d\theta \, \lambda(\d u),
\end{array}
\end{equation}
where we have set $\mu_{\theta,k,n}=\mu_{t_{k}^n} + \theta(\mu_{t_{k+1}^n}-\mu_{t_{k}^n})$. Since we assume that $\dmu v$ is twice continuously differentiable in $x$
we can apply the classical It\^o formula to the process $X^u$ on $[t_k^n,t_{k+1}^n]$:
\begin{align}
& \dmu v\big(t_k^n,\mu_{\theta,k,n}\big)(u,X^u_{t_{k+1}^n}) 
- \dmu v\big(t_k^n,\mu_{\theta,k,n}\big)(u,X^u_{t_{k}^n}) \;= \; 
\int_{t_{k}^n}^{t_{k+1}^n} 
\partial_x  \dmu v\big(t_k^n,\mu_{\theta,k,n}\big)(u,X_{s}^u)\trans \,\sigma_s^u\,\d W^u_s
\nonumber \\
& + \; \int_{t_{k}^n}^{t_{k+1}^n} \Big[  
\partial_x  \dmu v\big(t_k^n,\mu_{\theta,k,n}\big)(u,X_{s}^u) \cdot 
\,b_s^u + \frac{1}{2} \partial_x^2  \dmu v\big(t_k^n,\mu_{\theta,k,n}\big)(u,X_{s}^u)  :\,\sigma_s^u(\sigma_s^u)\trans 
\Big]\,\d s. \label{itointer} 
\end{align}
We next verify that the stochastic integral has zero expectation, by checking that the square root of its quadratic variation has finite expectation. Using \eqref{growthpartialv},
\begin{align*}
& \Big( \int_{t_{k}^n}^{t_{k+1}^n} 
\Big|  \partial_x\dmu v\big(t_k^n,\mu_{\theta,k,n}\big)(u,X_{s}^u)\trans 
\,\sigma_s^u\Big|^2\,ds\Big)^{1/2} \\
\le & \;  C\, \Big(
\int_{t_{k}^n}^{t_{k+1}^n} 
 \left(1+|X_{s}^u|^2+
 \bd  (  \mu_{\theta,k,n},\delta_0)^2\right)
\,|\sigma_s^u|^2\,\d s\Big)^{1/2} \\
\le& \;  C\,\Big(1+\sup_{s\in [t_k^n,t^n_{k+1}]}|X_{s}^u|^2+\bd 
(\mu_{\theta,k,n},\delta_0)^2 \Big)^{1/2}
 \Big(\int_{t_{k}^n}^{t_{k+1}^n}  |\sigma_s^u|^2\,ds\Big)^{1/2}.
\end{align*}
Both terms are square summable, by our assumptions, and the integrability  property of the quadratic variation follows from the H\"older inequality.
Taking expectation in the It\^o formula \eqref{itointer} and replacing in
\eqref{secondincrement}
we obtain
\begin{equation*}
\begin{array}{lll}\displaystyle
\upsilon(t^n_{k},\mu_{t_{k+1}^n})
-\upsilon(t^n_{k},\mu_{t_{k}^n})&=&\displaystyle 
\int_U  \int_0^1
\int_{t_k^n}^{t_{k+1}^n}\E\Big[\partial_x  \dmu v\big(t_k^n,\mu_{\theta,k,n}\big)(u,X_{s}^u) \cdot 
\,b_s^u \\ 
&&\displaystyle + \; 
\frac{1}{2} \partial_x^2  \dmu v\big(t_k^n,\mu_{\theta,k,n}\big)(u,X_{s}^u) 
:\,\sigma_s^u(\sigma_s^u)\trans \Big]\,\d s\,\d \theta \, \lambda(\d u).
\end{array}
\end{equation*}
Coming back to  \eqref{firstdifference} and summing over $k$ we arrive at
$$
\begin{array}{lll}
\upsilon(t,\mu_{t})-v(0,\mu_{0})
&=&\displaystyle
\sum_{k=0}^{n-1}
\int_{t_k^n}^{t_{k+1}^n}
\partial_tv(s, \mu_{t_{k+1}^n})\,\d s
\\&&\displaystyle
+ \; \int_U  \int_0^1
\sum_{k=0}^n
\int_{t_k^n}^{t_{k+1}^n}\E\Big[\partial_x  \dmu v\big(t_k^n,\mu_{\theta,k,n}\big)(u,X_{s}^u) \cdot 
\,b_s^u \\ & & \displaystyle
+ \;  \frac{1}{2} \partial_x^2  \dmu v\big(t_k^n,\mu_{\theta,k,n}\big)(u,X_{s}^u)  
:\,\sigma_s^u(\sigma_s^u)\trans \Big]\,\d s\,\d\theta \, \lambda(\d u).
\end{array}
$$
We may write the It\^o formula in similar way, decomposing the integral over $[0,t]$ into a sum of integrals over $[t_k^n,t_{k+1}^n]$, and we see that setting
\begin{align} 
I_1^n & := \;  \sum_{k=0}^{n-1} \int_{t_k^n}^{t_{k+1}^n}
\Big(\partial_tv(s, \mu_{t_{k+1}^n})- \partial_tv(s, \mu_s)\Big) \,\d s,
\end{align} 
\begin{align} 
I_2^n & := \; \int_U  \int_0^1\E\Big[
\sum_{k=0}^n
\int_{t_k^n}^{t_{k+1}^n}\!\!\Big\{\partial_x  \dmu v\big(t_k^n,\mu_{\theta,k,n}\big)(u,X_{s}^u) \cdot 
\,b_s^u - \partial_x  \dmu  v\big(s,\mu_{s}\big)(u,X_{s}^u) \cdot  \,b_s^u
\Big\}\,\d s\Big]\,\d \theta \, \lambda(\d u)
\end{align} 
$$\begin{array}{lll}
I_3^n &: =& \displaystyle
\int_U  \int_0^1\E\Big[
\sum_{k=0}^n
\int_{t_k^n}^{t_{k+1}^n}\Big\{
  \frac{1}{2}
\partial_x^2  \dmu v\big(t_k^n,\mu_{\theta,k,n}\big)(u,X_{s}^u) 
:\,\sigma_s^u(\sigma_s^u)\trans 
\\&&\displaystyle \qquad \quad \quad - \; 
\frac{1}{2}
\partial_x^2  \dmu v\big(s,\mu_{s}\big)(u,X_{s}^u) 
:\,\sigma_s^u(\sigma_s^u)\trans  \Big\}\,ds\Big]\,\d\theta \, \lambda(\d u),
\end{array}
$$
it is enough to prove that $I^n_1+I_2^n+I_3^n\to 0$. 
It is immediate to see that $I^n_1\to 0$, since $\partial_t v$ is continuous, hence uniformly continuous, on $[0,T]\times K_1$, where $K_1=\{\mu_s\,:\, s\in [0,T]\}$, because $K_1$ is compact in $L^2_\lambda(\Pc_2(\R^d))$ by the continuity of $s\mapsto \mu_s$.

Next we consider $I_2^n$. 
We fix $u,\theta,\omega$. We note that for $\P$-almost all $\omega$ the set
$$
H^{u,\omega}:=\{X_s^u(\omega)\,:\, s\in [0,T]\}
$$
is compact in $\R^d$ due to continuity of $s\mapsto X_s^u(\omega)$. 
By our assumptions and 
Remark \ref{unifcontpartialv}, 
for every compact $K\subset L^2_\lambda(\Pc_2(\R^d))$ and $\epsilon>0$  there exists $\delta>0$ such that
$$
\left|
\partial_x\dmu \upsilon(t',\mu)(u,x)
-
\partial_x\dmu \upsilon(t,\mu)(u,x) \right|< \epsilon
$$ 
whenever   $x\in H^{u,\omega}$, $t,t'\in [0,T]$, $  \mu,\mu'\in K$, ${\bf d}(\mu,\mu')<\delta$, $|t-t'|<\delta$.
As the set $K$, we choose the one defined in \eqref{Kcompact}
and we note that points of the form $(s,\mu_s)$ and $(t^n_k, \mu_{\theta,k,n})$ belong to $[0,T]\times K$. It follows that, $\P$-a.s.,
$$\left|
\partial_x\dmu \upsilon(t^n_k,\mu_{\theta,k,n})(u,X^u_s) 
-
\partial_x\dmu v(s,\mu_s)(u,X^u_s) \right|< \epsilon
$$
provided the inequalities
$$|s-t^n_k|<\delta,\quad {\bf d}(\mu_s,\mu_{\theta,k,n})<\delta, \qquad \quad s\in [t^n_k,t^n_{k+1}],\;
k=0,\ldots,n-1,
$$ are satisfied. 
We finally note that 
$|s-t^n_k|\le |t^n_{k+1}-t^n_k| \le 1/n$ and, by \eqref{Wconv2}, the squared distance
   \begin{align*}
   {\bf d}(\mu_s,\mu_{\theta,k,n})^2 
   & = \;   
   {\bf d}(\mu_s,(1-\theta)\mu_{t_k^n}+\theta \mu_{t_{k+1}^n})^2
   \\
   &\le \;   (1-\theta)  
   {\bf d}(\mu_s,\mu_{t_k^n})^2
   +\theta {\bf d}(\mu_s,\mu_{t_{k+1}^n})^2 
   \; \le   \; 
   {\bf d}(\mu_s,\mu_{t_k^n})^2
   +  {\bf d}(\mu_s,\mu_{t_{k+1}^n})^2
   \end{align*} 
can be made arbitrarily small by taking $n$ sufficiently large, by the uniform continuity of $s\mapsto \mu_s$ on $[0,T]$. It follows that for all $n$ large enough (depending on $u,\theta,\omega$)
$$
\sum_{k=0}^n
\int_{t_k^n}^{t_{k+1}^n}\Big|\partial_x  \dmu v\big(t_k^n,\mu_{\theta,k,n}\big)(u,X_{s}^u) \cdot  
\,b_s^u -\partial_x  \dmu 
v\big(s,\mu_{s}\big)(u,X_{s}^u) \cdot  \,b_s^u \Big|\,\d s\le \epsilon\,T.
$$
We have thus proved that for fixed $\theta\in [0,1]$ and $u\in U$ we have, $\P$-a.s.,
$$
\sum_{k=0}^n
\int_{t_k^n}^{t_{k+1}^n}\partial_x  \dmu v\big(t_k^n,\mu_{\theta,k,n}\big)(u,X_{s}^u) \cdot 
\,b_s^u\,\d s \; \to \; 
\int_{0}^{t} \partial_x  \dmu  v\big(s,\mu_{s}\big)(u,X_{s}^u) \cdot \,b_s^u\,\d s.
$$ 
In order to conclude that $I_2^n\to 0$ we wish to apply the dominated convergence theorem and pass to the limit under the expectation sign and under the integrals over $[0,1]$ and $U$. To this end we consider the following estimates.
By \eqref{growthpartialv}, 
\begin{align*}
 \sum_{k=0}^n
\int_{t_k^n}^{t_{k+1}^n}\Big|\partial_x  \dmu v\big(t_k^n,\mu_{\theta,k,n}\big)(u,X_{s}^u) \cdot 
\,b_s^u \Big|\,\d s
&\le  \; 
C\,\sum_{k=0}^n
\int_{t_k^n}^{t_{k+1}^n}
 \Big(
 \left(1+|X_{s}^u|+
 \bd  (  \mu_{\theta,k,n},\delta_0)\right)
\,|b_s^u|\Big)\,\d s
\\
& \le \; 
C\, 
 \Big(1+\sup_{s\in [0,T]}|X_{s}^u|+\sup_{\mu\in K}
 \bd  (  \mu,\delta_0)\Big)
\int_{0}^{t}
 |b_s^u|\,\d s
\end{align*}
 The right-hand side does not depend on $n$ nor $\theta$ and satisfies
$$
\int_U\E \Big[
 \Big(1+\sup_{s\in [0,T]}|X_{s}^u|+\sup_{\mu\in K}
 \bd  (  \mu,\delta_0)\Big)
\int_{0}^{t}
 |b_s^u|\,\d s
\Big]\,\lambda(\d u)
\; < \; \infty
$$ 
by the H\"older inequality, since 
$$
\int_U\E\Big[
 \sup_{s\in [0,T]}|X_{s}^u|^2\Big] \,\lambda(\d u)
\; < \; \infty,
\qquad
\int_U\E\Big[
\int_{0}^{T}|b_s^u|^2\,\d s
 \Big] \,\lambda(\d u)
<\infty.
$$ 
So we can apply the dominated convergence theorem (three times) and we conclude that $I_2^n\to 0$.

Finally, the proof that $I_3^n\to 0$ is similar to the proof that $I_2^n\to 0$ and even simpler, since we apply the boundedness condition
\eqref{growthpartial2v} instead of \eqref{growthpartialv}.
\ep

\bigskip

\begin{Remark}
We note that Theorem \ref{ito}   applies 
to  the process $(X^u)_u$ solution to the state equation \eqref{stateeq}. As a
consequence, if $v\in \tilde C^{1,2}([0,T]\times L^2_\lambda(\Pc_2(\R^d)))$ we have, on the interval $[t,T]$,
\begin{align} \label{itov} 
\d v(s,\mu_s)&= \;  \partial_s v(s,\mu_s)\,\d s +
\int_U \E \Big[\partial_x\dmu v (s,\mu_s)(u, X_s^u) \cdot b(u,X^u_s,\alpha^u_s,\mu_s, 
\P_{\alpha^{\cdot}_s}) \\ 
& \quad  + \; \frac{1}{2}\partial_x^2\dmu v (s,\mu_s)(u, X_s^u): \sigma\sigma\trans(u,X^u_s,\alpha_s^u,\mu_s, 
\P_{\alpha^{\cdot}_s}) \Big]\,\lambda(\d u) \,\d s.
\end{align}
\end{Remark}

\section{The Bellman equation} \label{secHJB} 

In this section, we make the standing Assumptions \ref{assumptionbasic}, \ref{assumptiononfgbis}, \ref{assumptiononfg}.

\subsection{The equation}


For $\pi=(\pi^u)_u\in L^2_\lambda(\Pc_2(\R^d\times A))$ we denote by $\pi^u_1\in\Pc_2(\R^d)$, $\pi^u_2\in \Pc_2(A)$ the marginals of $\pi^u\in \Pc_2(\R^d\times A)$ and we set  $\pi_1=(\pi^u_1)_u\in L^2_\lambda(\Pc_2(\R^d))=L^2_\lambda(\Pc_2(\R^d))$, 
 $\pi_2=(\pi^u_2)_u\in L^2_\lambda(\Pc_2(A))$. 

We next introduce the Hamiltonian $\Hc$ defined by 
\beqs
\Hc(u,t,\pi,\varphi) & = &
\int_{\R^d\times A} \Big(\partial_x\dmu \varphi (t,\pi_1)(u,x) \cdot b(u,x,a,\pi_1,\pi_2) \\
 & & \quad  + \;  \frac{1}{2} \partial_x^2\dmu \varphi (t,\pi_1)(u,x) : \sigma\sigma\trans(u,x,a,\pi_1,\pi_2) 
 +f(u,x ,a,\pi_1 ,\pi_2)\Big) \pi^u(\d x,\d a)
\enqs
for $(u,t,\pi,\varphi)\in U\times [0,T]\times L^2_\lambda(\Pc_2(\R^d\times A))\times \tilde C^{1,2}([0,T]\times L^2_\lambda(\Pc_2(\R^d)))$. In our framework, the Bellman equation is written as 
\begin{align} \label{HJB}
- \partial_t  \upsilon(t,\mu )  - \Inf_{\pi\in L^2_\lambda(\Pc_2(\R^d\times A)),\pi_1=\mu}
\int_U \Hc(u,t,\pi,\upsilon)\lambda(\d u) &  = \;  0, 
\end{align}
for  $(t,\mu) \in[0,T)\times L^2_\lambda(\Pc_2(\R^d))$, together with the terminal condition
\begin{align} \label{TCHJB}
\upsilon(T,\mu )  & =  \;  \int_U \int_{\R^d} g(u,x,\mu)\,\mu^u(\d x)\,\lambda(\d u)\;, \quad  \mu\in L^2_\lambda(\Pc_2(\R^d)).
\end{align}

\subsection{The regular case}

In the case where the value function is smooth, we provide a verification result.

\begin{Theorem}[Verification]\label{VERIFTHM} Let $w\in \tilde C^{1,2}([0,T]\times L^2_\lambda(\Pc_2(\R^d)))$. 
\begin{enumerate}[(i)]
\item Suppose that $w$ is solution to \reff{HJB}-\reff{TCHJB}
Then we have $w\leq \upsilon$ on $[0,T]\times L^2_\lambda(\Pc_2(\R^d))$.
\item  Suppose further that there exists a Borel map $\hat a:~U\times[0,T]\times \R^d\times L^2_\lambda(\Pc_2(\R^d))\rightarrow A$ such that for any $(t,\mu)\in[0,T]\times L^2_\lambda(\Pc_2(\R^d))$  the infimum in \eqref{HJB}, with $w$ in place of $\upsilon$, is reached at a point $\hat\pi=(\hat\pi^u)$  which has the form
$$
\hat\pi^u=\mu^u\circ (Id \times \hat a(u,t,\cdot,\mu))^{-1}\,\quad u\in U\;,
$$
i.e. such that its marginal $\pi^u_1$ is $\mu^u$ and the marginal $\pi^u_2$ is the image of $\mu^u$ under a suitable map $x\mapsto \hat a(u,t,x,\mu)$, and the system 
\begin{equation*}
     \left\{
\begin{array}{l}
    d X^u_s  =  b\Big(u,X^u_s, \hat a(u,s,X^u_s, \P_{X^{\cdot}_s}),  \P_{X^{\cdot}_s}, ( \P_{\hat a(v,s,X^{v}_s,\P_{X^{\cdot}_s} )} )_{v}\Big)\d s \\
        + \; \sigma\Big(u,X^u_s, \hat a(u,s,X^u_s, \P_{X^{\cdot}_s}),  \P_{X^{\cdot}_s}, ( \P_{\hat a(v,s,X^{v}_s,\P_{X^{\cdot}_s} )} )_{v}\Big)\d W^u_s,
    \;\; s\in [t,T],
    \\
    X_t^u  =  \xi^u, \;\; u\in U,
\end{array}\right.
 \end{equation*}
 admits a unique solution, in the sense of Definition \ref{solutionstate}, denoted by $(\hat X^{t,\xi,u})_u$ such that $(\hat \alpha^u)_u:=(\hat a(u,s,\hat X^{t,\xi,u}_s,\P_{\hat X^{t,\xi,\cdot}_s}))_{u\in U,s\in[0,T]}\in\Ac$  for any $t\in [0,T]$ and any $\xi\in \Ic_t$. Then $w=v$ on $[0,T]\times L^2_\lambda(\Pc_2(\R^d))$ and $\hat \alpha$ is an optimal Markov control:
 \beqs
\upsilon(t,\mu) & = & J\big(t,\xi,(\hat \alpha^u)_{u}\big)
 \enqs
 for $t\in[0,T]$, $\mu\in L^2_\lambda(\Pc_2(\R^d))$ and $\xi\in\Ic_t$ such that $(\P_{\xi_u})_u=\mu$.
        \end{enumerate}
\end{Theorem}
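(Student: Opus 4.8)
The plan is to use the Itô formula of Theorem \ref{ito} applied to $w \in \tilde C^{1,2}$ along the flow of marginal laws of a controlled state process, combined with the definition of the value function and the structure of the Hamiltonian. For part (i), fix $(t,\mu)\in[0,T]\times L^2_\lambda(\Pc_2(\R^d))$ and $\xi\in\Ic_t$ with $(\P_{\xi^u})_u=\mu$. Take an arbitrary admissible control $\alpha\in\Ac$ and let $X=(X^u)_u=X^{t,\xi,\alpha}$ be the corresponding solution to \eqref{stateeq}, with $\mu_s:=(\P_{X^u_s})_u$. Applying the Itô formula \eqref{itov} to $s\mapsto w(s,\mu_s)$ on $[t,T]$ and using the terminal condition \eqref{TCHJB} gives
\begin{align*}
w(t,\mu) &= \int_U\int_{\R^d} g(u,x,\mu_T)\,\mu_T^u(\d x)\,\lambda(\d u) \\
&\quad - \int_t^T\!\Big\{\partial_s w(s,\mu_s) + \int_U\E\big[\partial_x\dmu w(s,\mu_s)(u,X^u_s)\cdot b(u,X^u_s,\alpha^u_s,\mu_s,\P_{\alpha^{\cdot}_s}) \\
&\qquad\qquad\qquad + \tfrac12\partial_x^2\dmu w(s,\mu_s)(u,X^u_s):\sigma\sigma\trans(u,X^u_s,\alpha^u_s,\mu_s,\P_{\alpha^{\cdot}_s})\big]\,\lambda(\d u)\Big\}\,\d s.
\end{align*}
Now set $\pi^u_s:=\P_{(X^u_s,\alpha^u_s)}$, so that $\pi_{s,1}=\mu_s$, and observe that the inner expectation-integral equals $\int_U\big(\Hc(u,s,\pi_s,w) - \int_{\R^d\times A} f(u,x,a,\mu_s,\P_{\alpha^{\cdot}_s})\pi^u_s(\d x,\d a)\big)\lambda(\d u)$ by the definition of $\Hc$. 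Since $w$ solves \eqref{HJB}, we have $-\partial_s w(s,\mu_s) \le \int_U\Hc(u,s,\pi_s,w)\,\lambda(\d u)$ (the infimum being a lower bound for any admissible $\pi_s$ with $\pi_{s,1}=\mu_s$); rearranging, $\partial_s w(s,\mu_s) + \int_U\big(\Hc - \int f\,\pi^u_s\big)\lambda(\d u) \ge -\int_U\int_{\R^d\times A}f(u,x,a,\mu_s,\P_{\alpha^{\cdot}_s})\pi^u_s(\d x,\d a)\,\lambda(\d u)$, which, together with $\P_{X^{\cdot}_s}=\mu_s$, $\P_{\alpha^{\cdot}_s}=\pi_{s,2}$, yields
\[
w(t,\mu) \le \int_U\E\Big[\int_t^T f(u,X^u_s,\alpha^u_s,\P_{X^{\cdot}_s},\P_{\alpha^{\cdot}_s})\,\d s + g(u,X^u_T,\P_{X^{\cdot}_T})\Big]\lambda(\d u) = J(t,\xi,\alpha).
\]
Taking the infimum over $\alpha\in\Ac$ gives $w(t,\mu)\le V(t,\xi)=\upsilon(t,\mu)$.

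For part (ii), one runs the same computation with the candidate optimal feedback control $\hat\alpha^u_s=\hat a(u,s,\hat X^{t,\xi,u}_s,\P_{\hat X^{t,\xi,\cdot}_s})$ and the associated state process $\hat X=(\hat X^{t,\xi,u})_u$, which exists and is admissible by hypothesis. With $\hat\pi^u_s:=\P_{(\hat X^u_s,\hat\alpha^u_s)}=\mu^{\hat X}_s\circ(\mathrm{Id}\times\hat a(u,s,\cdot,\mu^{\hat X}_s))^{-1}$, the assumption that the infimum in \eqref{HJB} (with $w$ in place of $\upsilon$) is attained precisely at $\hat\pi_s$ turns the inequality $-\partial_s w(s,\mu^{\hat X}_s)\le\int_U\Hc(u,s,\hat\pi_s,w)\lambda(\d u)$ into an equality for $\d s$-a.e.\ $s$. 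Consequently the Itô computation above becomes an identity, giving $w(t,\mu)=J(t,\xi,\hat\alpha)$. Combining with $w\le\upsilon\le J(t,\xi,\hat\alpha)$ from part (i) and the definition of $\upsilon$ as an infimum, we conclude $w=\upsilon$ on $[0,T]\times L^2_\lambda(\Pc_2(\R^d))$ and that $\hat\alpha$ is optimal.

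The main obstacle I anticipate is measurability and integrability bookkeeping in the Hamiltonian rewriting: one must check that $u\mapsto\Hc(u,s,\pi_s,w)$ and $u\mapsto\int_{\R^d\times A}f(u,x,a,\mu_s,\pi_{s,2})\pi^u_s(\d x,\d a)$ are $\lambda$-measurable and that the infimum in \eqref{HJB} really is a pointwise lower bound valid for the particular $\pi_s=(\P_{(X^u_s,\alpha^u_s)})_u$ arising from an admissible $\alpha$ — this requires knowing that $\pi_s\in L^2_\lambda(\Pc_2(\R^d\times A))$, which follows from the measurability of $u\mapsto\Lc(X^u,W^u,Z^u)$ established in Theorem \ref{exuniq} together with the square-integrability bounds of Proposition \ref{stimeeqstate} and \eqref{alphaadmissible}. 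A secondary technical point in (ii) is verifying that the attained-infimum hypothesis, stated pointwise in $(t,\mu)$, can be applied along the random flow $(s,\mu^{\hat X}_s)$ for $\d s$-a.e.\ $s$, which is immediate once one notes the hypothesis holds for every $(t,\mu)$.
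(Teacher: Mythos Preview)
Your proposal is correct and follows essentially the same approach as the paper: apply the It\^o formula of Theorem \ref{ito} to $w(s,\mu_s)$ along the controlled flow, rewrite the drift term in terms of the Hamiltonian $\Hc$, use that $-\partial_s w(s,\mu_s)=\inf_\pi\int_U\Hc\le\int_U\Hc(u,s,\pi_s,w)\,\lambda(\d u)$ to get $w(t,\mu)\le J(t,\xi,\alpha)$ for every $\alpha$, and for (ii) turn the inequality into equality at the optimizer $\hat\pi_s$. Your discussion of the measurability and integrability bookkeeping (that $\pi_s\in L^2_\lambda(\Pc_2(\R^d\times A))$ via Theorem \ref{exuniq} and Proposition \ref{stimeeqstate}) is in fact more explicit than the paper's own proof, which simply performs the computation.
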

\begin{proof}
(i) Fix $(t,\mu)\in [0,T]\times L^2_\lambda(\Pc_2(\R^d))$. Let $\xi\in \Ic_t$ such that $\P_{\xi^\cdot}=\mu$  and a control $\alpha\in\Ac$. Denote by $( X^{u})_u$ the unique solution to the system
\begin{equation}\label{SDEverif(i)}
     \left\{
\begin{array}{rcl}
    d X^u_s & = & b\big(u,X^u_s, \alpha^u_s,  \P_{X^{\cdot}_s},  \P_{\alpha_s^{\cdot}}\big) \d s 
      + \sigma\big(u,X^u_s, \alpha^u_s,  \P_{X^{\cdot}_s},  \P_{\alpha_s^{\cdot}}\big)\d W^u_s,
    \; s\in [t,T],
    \\
    X_t^u & = & \xi^u, \;\; u\in U.
\end{array}\right.
 \end{equation} 
 We next write $\mu_s$, $\pi_s$, $b_s^u(x,a)$, $\sigma_s^u(x,a)$ and $f_s^u(x,a)$
 for $\P_{X^{\cdot}_s}$, $\P_{(X^\cdot_s,\alpha^\cdot_s)}$, $b(u,x,a,\P_{X^{\cdot}_s},\P_{\alpha^{\cdot}_s})$, $\sigma(u,x,a,\P_{X^{\cdot}_s}, \P_{\alpha^{\cdot}_s})$, and 
 $f(u,x,a,\P_{X^{\cdot}_s}, \P_{\alpha^{\cdot}_s})$ respectively. 
From Theorem \ref{ito} we have
\begin{equation*}
    \begin{array}{l}\displaystyle
w(t,\mu) \; = \; w(T,\mu_T)-\int_t^T \Big\{
\partial_t w(s,\mu_s)\,\d s \; + \; 
\\ 
\displaystyle 
\int_U   \int_{\R^d\times A}\Big[\partial_x\dmu w(s,\mu_s)(u,x) \cdot b_s^u(x,a) 
+ \frac{1}{2} \partial_x^2\dmu w(s,\mu_s)(u,x) : \sigma_s^u(\sigma_s^u)\trans (x,a) \Big] \pi(\d x,\d a)\,
\lambda(\d u) \Big\} \, \d s  
\end{array}
\end{equation*}
Since $w$ is solution to \reff{HJB}-\reff{TCHJB}, we get
\beqs
w(t,\mu) & \leq & \int_U \Big[\int_{\R^d} g(u,x,\mu_T)\,\mu^u_T(\d x) + 
\int_t^T\int_{\R^d\times A}f_s^u(u,x)\pi_s(\d x,\d a)\d s\Big]\lambda(\d u) 
\; = \; J(t,\xi,\alpha).
\enqs
Since this inequality holds for any $\alpha\in \Ac$, we get $w$ $\leq$ $\upsilon$. 

\vspace{1mm}

\noindent (ii) We proceed as in (i) with the control $(\hat \alpha^u)_u$ instead of $\alpha$. We then get
\begin{equation*}
    \begin{array}{rcl}\displaystyle
w(t,\mu) & = &\displaystyle w(T,\mu_T)-\int_t^T \Big\{
\partial_tw(s,\mu_s)\,
\displaystyle
+\inf_{\pi\in L^2_\lambda(\Pc_2(\R^d\times A)),\pi_1=\mu
}\int_U \Hc(u,t,\pi,w)\lambda(\d u) 
\Big\}  
\,\d s  \\
& & \quad + \displaystyle \int_t^T \int_U \int_{\R^d\times A}f_s(u,x,a,\P_{\hat X^{t,\xi,\cdot}_s},\P_{\hat\alpha^{\cdot}_s})\P_{(\hat X^{t,\xi,u}_s,\hat \alpha^u_s)}(\d x,\d a) \Big]\lambda(\d u) \d s.  
\end{array}
\end{equation*}
Since $w$ is solution to \reff{HJB}-\reff{TCHJB}, we get
\beqs
w(t,\mu) & = & \int_U \E \Big[  g(u,\hat X_T^{t,\xi,u},\P_{\hat X^{t,\xi,\cdot}_T})  
+ \int_t^T f(u,\hat X^{t,\xi,u}_s,\hat \alpha^{u}_s,\P_{\hat X^{t,\xi,\cdot}_s},\P_{\hat\alpha^{\cdot}_s}) \d s \Big] 
\lambda(\d u) \\
&=& J(t,\xi,\hat\alpha) \;  \geq \; \upsilon(t,\mu).
\enqs
Therefore, $w(t,\mu)=\upsilon(t,\mu)$ $=$ $J(t,\xi,\hat\alpha)$. 
\end{proof}

\begin{Remark} 
In the case where the coefficients $b$, $\sigma$ and $f$ do not depend on the second marginal $\pi_2$ of $\pi$, the PDE \reff{HJB} takes the following form
\begin{eqnarray}
 \label{HJBbis}
-\partial_t  \upsilon(t,\mu ) 
- \int_U \int_{\R^d}\inf_{a\in A}H(u,t,x,\mu,\upsilon,a)\mu(\d x)\lambda(\d u) &  = & 0
\;,\; (t,\mu)\in[0,T)\times L^2_\lambda(\Pc_2(\R^d))\;,
\end{eqnarray}
with
\beqs
H(u,t,x,\mu,\varphi,a) & = & \partial_x\dmu \varphi (t,\mu)(u,x) \cdot b(u,x,a,\mu) 
 + \frac{1}{2}\partial_x^2\dmu \varphi (t,\mu)(u,x) : \sigma\sigma\trans(u,x,a,\mu) \\
 & & + \; f(u,x,a,\mu), 
\enqs
for $(t,u,\mu,\varphi,a)\in[0,T]\times U\times L^2_\lambda(\Pc_2(\R^d))\times \tilde C^{1,2}([0,T]\times L^2_\lambda(\Pc_2(\R^d)))\times A$.

In this case, using the same arguments, one can prove Theorem \ref{VERIFTHM} (i). Moreover, if the Borel map $\hat a$ is replaced by a Borel map $\tilde a:~U\times\R^d\times L^2_\lambda(\Pc_2(\R^d))\rightarrow A$  such that
\beqs
H(u,t,x,\mu,w,\tilde a(u,t,x,\mu)) & = & \inf_{a\in A} H(u,t,x,\mu,w,a)
\enqs
for all $(t,u,x,\mu)\in [0,T]\times U\times \R^d\times L^2_\lambda(\Pc_2(\R^d))$ 
and the system 
\begin{equation*}
     \left\{
\begin{array}{lcl}
    d X^u_s  &=&  b\big(u,X^u_s, \tilde a(u,s,X^u_s, \P_{X^{\cdot}_s}),  \P_{X^{\cdot}_s}, \big)\d s \\
        & & + \;  \sigma\big(u,X^u_s, \tilde a(u,s,X^u_s, \P_{X^{\cdot}_s}),  \P_{X^{\cdot}_s}, \big)\d W^u_s,
    \;\; s\in [t,T],
    \\
    X_t^u  &=&  \xi^u, \;\; u\in U,
\end{array}\right.
 \end{equation*}
 admits a unique solution, in the sense of Definition \ref{solutionstate}, denoted by $(\tilde X^{t,\xi,u})_u$ such that $(\tilde \alpha^u)_u:=(\tilde a(u,.,\tilde X^{t,\xi,u},\P_{\tilde X^{t,\xi,\cdot}}))_{u}\in\Ac$  for any $t\in [0,T]$ and any $\xi\in \Ic_t$. Then $w=\upsilon$ on $[0,T]\times L^2_\lambda(\Pc_2(\R^d))$ and $\tilde \alpha$ is an optimal Markov control:
 \beqs
\upsilon(t,\mu) & = & J\big(t,\xi,(\tilde \alpha^u)_{u}\big)
 \enqs
 for $t\in[0,T]$, $\mu\in L^2_\lambda(\Pc_2(\R^d))$ and $\xi\in\Ic_t$ such that $\P_{\xi^\cdot}=\mu$. The proof follows the same lines as that of Theorem \ref{VERIFTHM} (ii).
\end{Remark}


\subsection{Viscosity properties}

For a  locally bounded function $w:~[0,T]\times L^2_\lambda(\Pc_2(\R^d))\rightarrow \R$ (i.e. bounded on bounded sets), we define its lower and upper semicontinuous envelopes respectively by
\beqs
\displaystyle 
w_*(t,\mu) \;  = \;  \liminf_{(s,\nu)\rightarrow(t,\mu),\;s<T}w(s,\nu), &  \quad \quad 
\displaystyle 
w^*(t,\mu) \; = \;  \limsup_{(s,\nu)\rightarrow(t,\mu),\;s<T}w(s,\nu), 
\enqs
for $(t,\mu)\in [0,T]\times L^2_\lambda(\Pc_2(\R^d))$.

\begin{Definition}\label{DEFSolVisco} Let $w:~[0,T]\times L^2_\lambda(\Pc_2(\R^d))\rightarrow \R$ be a  locally bounded function.
    \begin{enumerate}[(i)]
        \item We say that $w$ is a viscosity subsolution to   \eqref{HJB}-\eqref{TCHJB} if
        \beqs
        w^*(T,\mu) & \leq & \displaystyle
\int_U \int_{\R^d} g(u,x,\mu)\,\mu^u(\d x)\,\lambda(\d u)\;,\quad \mu\in L^2_\lambda(\Pc_2(\R^d))\;,
        \enqs
        and for any $\varphi\in \tilde C^{1,2}([0,T]\times L^2_\lambda(\Pc_2(\R^d)))$  and $(t,\mu)\in [0,T]\times L^2_\lambda(\Pc_2(\R^d))$, such that
        \beqs
        (w^*-\varphi)(t,\mu) & = & \max_{[0,T]\times L^2_\lambda(\Pc_2(\R^d))}(w^*-\varphi) 
        \enqs
        we have
        \beqs
        -\partial_t  \varphi(t,\mu )   -\displaystyle
\inf_{\pi\in L^2_\lambda(\Pc_2(\R^d\times A)), \pi_1=\mu}\int_U \Hc(u,t,\pi,\varphi) 
\lambda(\d u)  & \leq & 0\;.
        \enqs

        \item We say that $w$ is a viscosity supersolution to   \eqref{HJB}-\eqref{TCHJB} if
        \beqs
        w_*(T,\mu) & \geq & \displaystyle
\int_U \int_{\R^d} g(u,x,\mu)\,\mu^u(\d x)\,\lambda(\d u)\;,\quad \mu\in L^2_\lambda(\Pc_2(\R^d))\;,
        \enqs
        and for any $\varphi\in \tilde C^{1,2}([0,T]\times L^2_\lambda(\Pc_2(\R^d)))$  and $(t,\mu)\in [0,T]\times L^2_\lambda(\Pc_2(\R^d))$, such that
        \beqs
        (w_*-\varphi)(t,\mu) & = & \min_{[0,T]\times L^2_\lambda(\Pc_2(\R^d))}(w_*-\varphi) 
        \enqs
        we have
        \beqs
        -\partial_t  \varphi(t,\mu )   -\displaystyle
\inf_{\pi\in L^2_\lambda(\Pc_2(\R^d\times A)), \pi_1=\mu}\int_U 
\Hc(u,t,\pi,\varphi)
\,\lambda(du)   & \geq & 0\;.
        \enqs
        \item We say that $w$ is a viscosity solution to   \eqref{HJB}-\eqref{TCHJB} if  $w$ is both a viscosity subsolution and supersolution to   \eqref{HJB}-\eqref{TCHJB}. 

    \end{enumerate}
\end{Definition}

\begin{Assumption}\label{HamiltonianAss}
   \begin{enumerate}[(i)] 
   \item There exist a constant $M\ge 0$ such that
 \beqs
|b(u,x,a,\mu,\nu)|+
|\sigma(u,x,a,\mu,\nu)|
  & \le &  M\big( 1+|x|+ \bd(\mu,\delta_0)\big)
  \enqs
 for every  $u\in U$, $x,x'\in\R^d$, $\mu,\mu'\in L^2_\lambda(\Pc_2(\R^d))$, $\nu\in L^2_\lambda(\Pc_2(A))$, $a\in A$.
   \item The function
    \beqs
    (t,\mu)\in [0,T]\times L^2_\lambda(\Pc_2(\R^d)) & \mapsto & \inf_{\pi\in L^2_\lambda(\Pc_2(\R^d\times A)),\pi_1=\mu
}\int_U \Hc(u,t,\pi,\varphi)\lambda(\d u) 
    \enqs
     is continuous for any $\varphi\in \tilde C^{1,2}([0,T]\times L^2_\lambda(\Pc_2(\R^d)))$.
      \end{enumerate}
    
\end{Assumption}


\begin{Theorem}\label{ThmVisco} 
Under Assumption \ref{HamiltonianAss}(i), 
the  value function $\upsilon$ is a viscosity supersolution to \reff{HJB}-\reff{TCHJB}. Furthermore, if Assumption \ref{HamiltonianAss}(ii) holds, 
then $\upsilon$ is a viscosity subsolution to \reff{HJB}-\reff{TCHJB}. 
\end{Theorem}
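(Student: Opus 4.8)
The plan is to follow the classical dynamic‑programming route: combine the dynamic programming principle of Corollary~\ref{CorDPPv} with the It\^o chain rule \eqref{itov} applied to a test function $\varphi\in\tilde C^{1,2}([0,T]\times L^2_\lambda(\Pc_2(\R^d)))$ composed with a controlled flow of marginal laws, and handle the terminal condition separately. Two facts are used repeatedly. First, for any $(t,\mu)\in[0,T]\times L^2_\lambda(\Pc_2(\R^d))$ the construction at the end of Section~\ref{secDPP} provides $\xi\in\Ic_t$ with $\P_{\xi^{\cdot}}=\mu$; we write $X^{t,\xi,\alpha}$ for the controlled state and $\mu^{\alpha}_{s}=\P_{X^{t,\xi,\alpha,{\cdot}}_s}$ for its flow of marginals. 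Second, since Condition \eqref{bsigfort} holds under Assumption~\ref{assumptiononfgbis}, estimate \eqref{estimX2} is \emph{uniform in the control}, so that $\bd(\mu^{\alpha}_{s},\mu)^2\le C(s-t)(1+\bd(\mu,\delta_0)^2)$ uniformly over $\alpha\in\Ac$; together with the growth bounds \eqref{growthpartialv}--\eqref{growthpartial2v} and the fact that neither $f$ nor $b,\sigma$ grows in the action variable, this makes $u\mapsto\Hc(u,s,\P_{(X^{\cdot}_s,\alpha^{\cdot}_s)},\varphi)$ bounded in $L^1_\lambda$ along any controlled flow, uniformly over admissible controls and over $s$ in a small interval.

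\emph{Terminal condition.} I would first show that $\upsilon$ is continuous at time $T$, namely $\upsilon(s,\nu)\to\int_U\int_{\R^d}g(u,x,\mu)\,\mu^u(\d x)\,\lambda(\d u)$ as $(s,\nu)\to(T,\mu)$ with $s<T$; this gives both $\upsilon^*(T,\mu)\le\int_U\int_{\R^d}g(u,x,\mu)\,\mu^u(\d x)\,\lambda(\d u)$ and $\upsilon_*(T,\mu)\ge\int_U\int_{\R^d}g(u,x,\mu)\,\mu^u(\d x)\,\lambda(\d u)$ (indeed with equality). For $\alpha\in\Ac$, the running part of $J(s,\xi^{\nu},\alpha)$ is $O(T-s)$ uniformly in $\alpha$ by the quadratic growth of $f$ and \eqref{estimX2}, while $\int_U\E[|X^{u}_T-\xi^{\nu,u}|^2]\,\lambda(\d u)\le C(T-s)(1+\bd(\nu,\delta_0)^2)$ uniformly in $\alpha$, so $\bd(\P_{X^{\cdot}_T},\nu)\to0$; the local H\"older continuity of $g$ in $(x,\mu)$ (Assumption~\ref{assumptiononfg}) and H\"older's inequality then give $\sup_{\alpha\in\Ac}\big|J(s,\xi^{\nu},\alpha)-\int_U\int_{\R^d}g(u,x,\nu)\,\nu^u(\d x)\,\lambda(\d u)\big|\to0$ as $s\to T$, uniformly for $\nu$ near $\mu$; taking the infimum over $\alpha$ and then letting $\nu\to\mu$ (using once more the H\"older continuity of $g$ in $\mu$ and $\Wc_2$‑convergence of $\nu^u$ along subsequences) yields the claim.

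\emph{Interior inequalities.} For the supersolution property, let $(t_0,\mu_0)\in[0,T)\times L^2_\lambda(\Pc_2(\R^d))$ be a global minimum of $\upsilon_*-\varphi$, normalised so $\upsilon_*\ge\varphi$ with equality at $(t_0,\mu_0)$, and pick $(t_n,\mu_n)\to(t_0,\mu_0)$ with $\upsilon(t_n,\mu_n)\to\varphi(t_0,\mu_0)$, so $\eta_n:=\upsilon(t_n,\mu_n)-\varphi(t_n,\mu_n)\ge0$, $\eta_n\to0$. For $h>0$ small, Corollary~\ref{CorDPPv} on $[t_n,t_n+h]$ together with $\upsilon(t_n+h,\cdot)\ge\varphi(t_n+h,\cdot)$ gives $\upsilon(t_n,\mu_n)\ge\inf_{\alpha\in\Ac}\big\{R^{\alpha}_n+\varphi(t_n+h,\mu^{\alpha}_{n,t_n+h})\big\}$, with $R^{\alpha}_n$ the running cost on $[t_n,t_n+h]$; applying \eqref{itov} to $\varphi$ along $\mu^{\alpha}_{n,\cdot}$ and merging with $R^{\alpha}_n$, the bracket equals $\varphi(t_n,\mu_n)+\int_{t_n}^{t_n+h}\big\{\partial_s\varphi(s,\mu^{\alpha}_{n,s})+\int_U\Hc(u,s,\P_{(X^{\cdot}_s,\alpha^{\cdot}_s)},\varphi)\,\lambda(\d u)\big\}\,\d s$, whose integrand is $\ge\partial_s\varphi(s,\mu^{\alpha}_{n,s})+\inf_{\pi\,:\,\pi_1=\mu^{\alpha}_{n,s}}\int_U\Hc(u,s,\pi,\varphi)\,\lambda(\d u)$. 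Since $\bd(\mu^{\alpha}_{n,s},\mu_0)\to0$ uniformly in $\alpha$, continuity of $\partial_t\varphi$ (and, if one lets the infimum enter, Assumption~\ref{HamiltonianAss}(ii)) lets one pass to the limit $n\to\infty$ and then $h\to0$, giving $-\partial_t\varphi(t_0,\mu_0)-\inf_{\pi:\pi_1=\mu_0}\int_U\Hc(u,t_0,\pi,\varphi)\lambda(\d u)\ge0$; here one can in fact avoid the continuity of the infimum by fixing a \emph{near‑optimal} $\bar\pi$ at $(t_0,\mu_0)$ and realising it as an admissible policy through the auxiliary variables $Z^u$, so only continuity of $\Hc$ at one point is used, which is why Assumption~\ref{HamiltonianAss}(ii) is not needed there. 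For the subsolution property, let $(t_0,\mu_0)$ be a global maximum of $\upsilon^*-\varphi$, normalised to $\upsilon^*\le\varphi$ with equality at $(t_0,\mu_0)$, and argue by contradiction assuming $-\partial_t\varphi(t_0,\mu_0)-\inf_{\pi:\pi_1=\mu_0}\int_U\Hc(u,t_0,\pi,\varphi)\lambda(\d u)\ge2\delta>0$; by Assumption~\ref{HamiltonianAss}(ii) and continuity of $\partial_t\varphi$ this localises to $\partial_s\varphi(s,\nu)+\inf_{\pi:\pi_1=\nu}\int_U\Hc(u,s,\pi,\varphi)\lambda(\d u)\le-\delta$ near $(t_0,\mu_0)$. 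Along $(t_n,\mu_n)\to(t_0,\mu_0)$ with $\upsilon(t_n,\mu_n)\to\varphi(t_0,\mu_0)$, pick a control realising for the frozen $\mu_n$ (via a measurable selection) a near‑minimizer of the Hamiltonian; then the trivial half of Corollary~\ref{CorDPPv}, $\upsilon(t_n,\mu_n)\le R^{\alpha}_n+\upsilon(t_n+h,\mu^{\alpha}_{n,t_n+h})\le R^{\alpha}_n+\varphi(t_n+h,\mu^{\alpha}_{n,t_n+h})$, combined with \eqref{itov}, yields $\upsilon(t_n,\mu_n)\le\varphi(t_n,\mu_n)-\tfrac{\delta}{2}h+o(h)$, and letting $n\to\infty$ contradicts $\upsilon(t_n,\mu_n),\varphi(t_n,\mu_n)\to\varphi(t_0,\mu_0)$.

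\emph{Main obstacle.} The delicate point throughout is the realisation of relaxed controls --- probability measures $\pi=(\pi^u)_u\in L^2_\lambda(\Pc_2(\R^d\times A))$ with prescribed first marginal --- by genuinely \emph{admissible} control policies, in a setting where the family $(X^u)_u$ is only independent and not jointly measurable in $(u,\omega)$, and the subsequent passage to the limit in $\int_U\Hc(u,s,\pi^{\alpha}_s,\varphi)\lambda(\d u)$ along the controlled flow. This is what forces us to work at the level of laws, to use the uniform variables $Z^u$ to manufacture arbitrary state/action couplings, to invoke the measurable‑selection and measurability results of Appendix~\ref{secappen}, and --- for the subsolution inequality --- the continuity Assumption~\ref{HamiltonianAss}(ii).
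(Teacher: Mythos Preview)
Your two interior arguments are essentially the ones the paper uses, but you have attached them to the wrong inequalities; in particular, your claim that the supersolution step goes through without Assumption~\ref{HamiltonianAss}(ii) by ``fixing a near-optimal $\bar\pi$'' is not valid.

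The ``fix a $\bar\pi$ and realise it as an admissible control'' device drives the paper's \emph{subsolution} proof (Step~1), not the supersolution one. At a maximum of $\upsilon^*-\varphi$ one has $\upsilon\le\varphi$; for any $\pi$ with $\pi_1=\mu$ the paper builds, via Theorem~\ref{repseqTHM}, an approximating sequence $\pi_n$ with $\pi_{n,1}=\mu_n$ together with random variables $(\xi^{n,u},\mathfrak a^{n,u})$ of law $\pi_n^u$, plugs the resulting constant-in-time control into the DPP inequality $\upsilon(t_n,\mu_n)\le R_n+\varphi(t_n+h,\cdot)$, applies \eqref{itov}, sends $n\to\infty$ then $h\downarrow0$, and obtains $-\partial_t\varphi(t,\mu)-\int_U\Hc(u,t,\pi,\varphi)\,\lambda(\d u)\le0$; since $\pi$ is arbitrary this is the subsolution inequality, and Assumption~\ref{HamiltonianAss}(ii) is never invoked there. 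In your supersolution paragraph, by contrast, the DPP hands you $\eta_n\ge\inf_{\alpha}\{R^\alpha_n+\varphi(t_n+h,\cdot)-\varphi(t_n,\mu_n)\}$, and you have no control over which $\alpha$ nearly attains the outer infimum; replacing $\inf_\alpha$ by the value at a fixed $\bar\alpha$ produces a quantity that is $\ge\inf_\alpha$, so the chain reads $\eta_n\ge\inf_\alpha\{\ldots\}\le\{\ldots\}_{\bar\alpha}$ and yields nothing. The paper's supersolution proof (Step~3) therefore argues by contradiction: it assumes $\partial_t\varphi+\inf_\pi\int_U\Hc\ge2\eta>0$ at $(t,\mu)$, invokes Assumption~\ref{HamiltonianAss}(ii) to propagate this to a neighbourhood, extracts $\tfrac{\eta h_n}{2}$-optimal controls $\alpha^n$ from the DPP, keeps the unknown flows $\mu^{\alpha^n}_{n,\cdot}$ inside that neighbourhood up to a stopping time $\theta_n$ (Assumption~\ref{HamiltonianAss}(i) ensuring $(\theta_n-t_n)/h_n\to1$), and reaches a contradiction via $\rho_n/h_n\to0$. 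This is exactly the structure of your ``subsolution by contradiction'' paragraph, applied on the other side.

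Your treatment of the terminal condition --- showing $\upsilon(s,\nu)\to\int_U\int_{\R^d}g(u,x,\mu)\,\mu^u(\d x)\,\lambda(\d u)$ as $(s,\nu)\to(T,\mu)$ uniformly over controls --- is a legitimate variant that merges the paper's Steps~2 and~4; the paper instead tests with a constant control for the upper bound and with $1/n$-optimal controls for the lower bound, relying on Proposition~\ref{WANTED} and Assumption~\ref{HamiltonianAss}(i) in the same way you do.
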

\begin{proof}  We first notice that the function $\upsilon$ is locally bounded by \eqref{growthV}. 
We turn to the viscosity properties.

\noindent \textbf{1.} {\it Viscosity subsolution property on $[0,T)\times L^2_\lambda(\Pc_2(\R^d))$.} 
Let $(t,\mu)\in[0,T)\times L^2_\lambda(\Pc_2(\R^d))$ and  $(t_n,\mu_n)_n$ be a sequence of $[0,T)\times L^2_\lambda(\Pc_2(\R^d))$ such that
\beqs
(t_n,\mu_n,\upsilon(t_n,\mu_n)) & \xrightarrow[n\rightarrow+\infty]{} & (t,\mu,\upsilon^*(t,\mu))\;.
\enqs 
Fix some $\pi\in L^2_\lambda(\Pc_2(\R^d\times A))$ such that $\pi_1=\mu$. Let $(\pi_n)_n$ be a sequence of $L^2_\lambda(\Pc_2(\R^d\times A))$ such that
$\pi_{n,1}=\mu_n$ for $n\geq 1$ and $\pi_n\rightarrow\pi$ as $n\rightarrow+\infty$. Such a sequence can be constructed by decomposing $\pi$ as $\pi^u(\d x,\d y)=\mu^u(\d x)\gamma^u(x,\d y)$ with $\gamma$  a probability kernel and taking $\pi_n^u(\d x,\d y) = \mu_n^u(\d x)\gamma^u(x,\d y)$ for $u\in U$. 

Applying Theorem \ref{repseqTHM} with $t_n=t=0$, 
there exists a sequence $(\underline \xi^n,\underline{\mathfrak{a}}^n)_n$ such that $\underline \xi^n$,$\mathfrak{a}^n$ are Borel maps from $U\times (0,1)$ to $\R^d$ and $A$, and $(\P_{(\xi^{n,u},\mathfrak{a}^{n,u})})_u= \pi_n$ with $\xi^{n,u}=\underline{\xi}^{n,u}(Z^u)$ and $\mathfrak{a}^{n,u}=\underline{\mathfrak{a}}^{n,u}(Z^u)$ for all $n\geq 1$. 
We define the control $\alpha\in \Ac$ by $\alpha^u_t=\mathfrak{a}(Z^u)$ for $t\in[0,T]\times U$.  
We consider the family of processes $(X^n)_n=(X^{n,u})_{n,u}$ as the unique solution to the SDE
\begin{equation*}
     \left\{
\begin{array}{rcl}
    d X^{n,u}_s & = & b\big(u,X^{n,u}_s, \alpha^{n,u}_s,  \P_{X^{n,\cdot}_s},  \P_{\alpha^{n,\cdot}_s}\big)
    \d s \\
      & &  + \;  \sigma\big(u,X^{n,u}_s, \alpha^{n,u}_s,  \P_{X^{n,\cdot}_s}, \P_{\alpha_s^{n,\cdot}}\big)
      \d W^u_s, \;\; s\in [t_n,T],
    \\
    X_{t_n}^{n,u} & = & \xi^{n,u}, \;\; u\in U.
\end{array}\right.
 \end{equation*} 
From the DPP  given by Corollary \ref{CorDPPv},  we have
\begin{align} \label{DPPsupsol}
\upsilon(t_n,\mu_n) & \leq \;  \int_U\E\Big[ \int_{t_n}^{t_n+h} f\big(u, X^{n,u}_s,\alpha^{n,u}_s,\P_{X^{n,\cdot}_s}, \P_{\alpha_s^{n,\cdot}}\big)\,\d s  \Big]\,\lambda(\d u)\\\nonumber
 & \quad \quad  + \;  \upsilon\big(t_n+h, \P_{X^{n,\cdot}_{t_n+h}}\big)
\end{align} 
for $h>0$ small enough.  Let $\varphi\in \tilde C^{1,2}([0,T]\times L^2_\lambda(\Pc_2(\R^d)))$ such that 
$(\upsilon^*-\varphi)(t,\mu)$ $=$  $\Max_{[0,T]\times L^2_\lambda(\Pc_2(\R^d))}(\upsilon^*-\varphi)$, 
so that with \eqref{DPPsupsol} 
\begin{align} \label{DPPsousphi}
0 & \leq \;  \int_U\E\Big[ \int_{t_n}^{t_n+h} f\big(u, X^{n,u}_s,\alpha^{n,u}_s,\P_{X^{n,\cdot}_s}, \P_{\alpha_s^{n,\cdot}}\big)\,\d s  \Big]\,\lambda(\d u)\\\nonumber
 & \quad \quad  + \;  \varphi\big(t_n+h, \P_{X^{n,\cdot}_{t_n+h}}\big) - \varphi(t_n,\mu_n) 
 + \gamma_n,
\end{align}
where we set $\gamma_n$ $:=$ $\upsilon^*(t,\mu) - \upsilon(t_n,\mu_n) + \varphi(t_n,\mu_n) - \varphi(t,\mu)$ $\rightarrow$ $0$ as $n$ goes to infinity. 
By applying It\^o formula in Theorem \ref{ito} to $\varphi(s,(\P_{X^{n,u}_s})_{u})$ between $t_n$ and $t_n+h$, and substituting into the above inequality, we then get
\begin{align}
0 & \leq \; \int_{t_n}^{t_n+h} \Big[   \partial_t\varphi(s,\mu_{n,s}) + 
\int_U \Hc(u,s,\pi_{n,s},\varphi) \lambda(\d u) \Big] \d s + \gamma_n,  
\end{align}
where we set $\mu_{n,s}$, $\pi_{n,s}$ for $\P_{X^{n,\cdot}_s}$, $\P_{(X^{n,\cdot}_s,\alpha^{n,\cdot}_s)}$ to alleviate notations.  By sending $n$ to infinity, and using the continuity of all the involved functions, this implies
\begin{align}
0 & \leq \; \int_{t}^{t+h} \Big[   \partial_t\varphi(s,\mu_{s}) + 
\int_U \Hc(u,s,\pi_{s},\varphi) \lambda(\d u) \Big] \d s.  
\end{align}
Dividing by $h$ and sending $h$ to 0, we get 
\beqs
 \displaystyle
 -\partial_t\varphi(t,\mu_{}) - \int_U    \Hc(u,t,\pi,\varphi) \,\lambda(\d u) & \leq & 0\;.
\enqs
Since this inequality holds for any $\pi\in L^2_\lambda(\Pc_2(\R^d\times A))$ such that $\pi_1=\mu$, we get the viscosity subsolution property on $[0,T)\times L^2_\lambda(\Pc_2(\R^d))$. 

\vspace{1mm}

\textbf{2.} {\it Viscosity subsolution property on $\{T\}\times L^2_\lambda(\Pc_2(\R^d))$}.
Let $(t_n,\mu_n)_n$ be a sequence of $[0,T)\times L^2_\lambda(\Pc_2(\R^d))$ such that
\beqs
(t_n,\mu_n,\upsilon(t_n,\mu_n)) & \xrightarrow[n\rightarrow+\infty]{} & (T,\mu,\upsilon^*(T,\mu))\;.
\enqs 
Fix some $a\in A$ and define the control $\alpha\in \Ac$ by $\alpha^u_t=a$ for $t\in[0,T]\times U$. From Theorem \ref{repseqTHM}, there exists $\xi\in \Ic_T$ such that $\P_{\xi^\cdot}=\mu$ and $(\xi^n)_n$ such that $\xi^n\in\Ic_{t_n}$ $\P_{\xi^{n,\cdot}}=\mu_n$ for $n\geq 1$ and 
\begin{align} \label{convL2xixin}
\int_0^T\E\big[|\xi^{n,u}-\xi^{u}|^2\big]\lambda(\d u) & \xrightarrow[n\rightarrow+\infty]{} \;  0\;.
\end{align} 
 Define the family of processes $(X^n)_n=(X^{n,u})_{n,u}$ as the unique solution to the SDE
\begin{equation*}
     \left\{
\begin{array}{rcl}
    d X^{n,u}_s & = & b\big(u,X^{n,u}_s, \alpha^u_s,  \P_{X^{n,\cdot}_s},  \P_{\alpha_s^\cdot}\big)\d s \\
      & &  \quad + \;  \sigma\big(u,X^{n,u}_s, \alpha^u_s,  \P_{X^{n,{\cdot}}_s} , 
      \P_{\alpha_s^{\cdot}}\big)\d W^u_s,
    \;\; s\in [t_n,T],
    \\
    X_{t_n}^{n,u} & = & \xi^{n,u}, \;\; u\in U.
\end{array}\right.
 \end{equation*} 
By definition of the value function $\upsilon$,  we have
\beqs
\upsilon(t_n,\mu_n) & \leq & \int_U\E\Big[ \int_{t_n}^T f\big(u, X^{n,u}_s,a,\P_{X^{n,\cdot}_s}, \delta_a\big)\,\d s + g\big(u, X^{n,u}_T, \P_{X^{n,\cdot}_T}\big) \Big]\,\lambda(\d u)\;.
\enqs
From \eqref{convL2xixin} and Proposition \ref{WANTED} 
and Assumption \ref{assumptiononfg}, we get
\beqs
\int_U\E\Big[ \int_{t_n}^T f\big(u, X^{n,u}_s,a,\P_{X^{n,\cdot}_s}, \delta_a\big)\,\d s + g\big(u, X^{n,u}_T, \P_{X^{n,\cdot}_T}\big) \Big]\,\lambda(\d u) &     & \\ 
\xrightarrow[n\rightarrow+\infty]{} \quad  \int_U \int_{\R^d} g(u,x,\mu)\,\mu^u(\d x)\,\lambda(\d u) & &
\enqs
and so 
\begin{align} \label{CondTestFuncSupSol}
\upsilon^*(T,\mu) & \leq \;  \int_U \int_{\R^d} g(u,x,\mu)\,\mu^u(\d x)\,\lambda(\d u)\;.
\end{align} 

\vspace{1mm}

\textbf{3.} {\it Viscosity supersolution property on $[0,T)\times L^2_\lambda(\Pc_2(\R^d))$}.
We argue by contradiction and suppose that there exist $(t,\mu)\in[0,T)\times L^2_\lambda(\Pc_2(\R^d))$, $\eta>0$ and $\varphi\in \tilde C^{1,2}([0,T]\times L^2_\lambda(\Pc_2(\R^d)))$ such that
        \begin{align} \label{CondTestFuncSubSol}
        (\upsilon_*-\varphi)(t,\mu) & = \;  \min_{[0,T]\times L^2_\lambda(\Pc_2(\R^d))}(\upsilon_* - \varphi) 
        \end{align} 
and
\beqs
        \partial_t  \varphi(t,\mu )   + \displaystyle
\inf_{\pi\in L^2_\lambda(\Pc_2(\R^d\times A)),\; \pi_1=\mu}\int_U 
\Hc(u,t,\pi,\varphi) \,\lambda(\d u)~~ =: ~~2\eta   & > & 0\;.
        \enqs
From Assumption \ref{HamiltonianAss} (ii), there exists some $\eps>0$ 
such that 
\begin{align} \label{eqContSubsol}
        \partial_t  \varphi(s,\nu )    + \displaystyle
\inf_{\pi\in L^2_\lambda(\Pc_2(\R^d\times A)) ,\; \pi_1=\nu}
\int_U  \Hc(u,s,\pi,\varphi) \,\lambda(\d u)~~   & \geq \;  \eta \;.
\end{align} 
for $s\in[t,(t+\eps)\wedge T]$ and $\nu\in B_{L^2_\lambda(\Pc_2(\R^d))}(\mu,\eps)$.
 Let  $(t_n,\mu_n)_n$ be a sequence of $[t,(t+\eps)\wedge T)\times B_{L^2_\lambda(\Pc_2(\R^d))}(\nu,\eps)$ such that
\begin{align} \label{ApproxSeqSubSol}
(t_n,\mu_n,\upsilon(t_n,\mu_n)) & \xrightarrow[n\rightarrow+\infty]{} \;  (t,\mu,\upsilon_*(t,\mu))\;.
\end{align} 
From Theorem \ref{repseqTHM}, there exists $\xi$ such that $\xi\in\Ic_t$ and $(\P_{\xi^{u}})_u= \mu$, and  a sequence $(\xi^n)_n$ such that $\xi^n\in\Ic_{t_n}$, $(\P_{(\xi^{n,u})})_u=\mu_n$ for all $n\geq 1$ 
and
\beqs
\int_0^T\E\big[|\xi^{n,u}-\xi^{u}|^2
\big]\lambda(du) & \xrightarrow[n\rightarrow+\infty]{} & 0\;.
\enqs 
    \\
We next define the sequence  $(\rho_n)_n$ by
\beqs
\rho_n & = & \upsilon(t_n,\mu_n)-\varphi(t_n,\mu_n)-\big(\upsilon_*(t,\mu)-\varphi(t,\mu)\big) 
\enqs
for $n\geq1$. From \eqref{CondTestFuncSubSol} and \eqref{ApproxSeqSubSol}, we have $\rho_n\geq 0$ for $n\geq 1$, 
and $\rho_n$ $\rightarrow$ $0$, as $n$ goes to infinity. We take a sequence $(h_n)_n$ of $(0,+\infty)$ such that
\beqs
h_n  \xrightarrow[n\rightarrow+\infty]{}  0 & \mbox{ and } & 
\frac{\rho_n}{h_n}  \xrightarrow[n\rightarrow+\infty]{}  0\;.
\enqs
 From the DPP given by Corollary \ref{CorDPPv}, there exists $ \alpha^n\in \Ac$ such that
\begin{align} \label{ineqDPPContSubSol}
\upsilon(t_n,\mu_n) + \frac{\eta h_n}{2} & \geq \;  \int_U\E\Big[ \int_{t_n}^{\theta_n} f\big(u, X^{n,u}_s,\alpha^{n,u}_s,\P_{X^{n,\cdot}_s}, \P_{\alpha_s^{n,\cdot}}\big)\,\d s  \Big]\,\lambda(\d u) \\
 & \quad  \quad + \;  \upsilon\big(\theta_n, \P_{X^{n,\cdot}_{\theta_n}}\big), \nonumber
\end{align} 
where the family of processes $(X^n)_n=(X^{n,u})_{n,u}$ 
stands for the unique solution to the SDE
\begin{equation*}
     \left\{
\begin{array}{rcl}
    d X^{n,u}_s & = & b\big(u,X^{n,u}_s, \alpha^{n,u}_{s},  \P_{X^{n,\cdot}_s},  \P_{\alpha_{s}^{n,\cdot}}\big) \d s \\
      & &  \quad + \; \sigma\big(u,X^{n,u}_s, \alpha^{n,u}_{s},  \P_{X^{n,\cdot}_s}, \P_{\alpha_{s}^{n,\cdot}}\big)
      \d W^u_s,
    \;\; s\in [t_n,T],
    \\
    X_{t_n}^{n,u} & = & \xi^{n,u}, \;\; u\in U,
\end{array}\right.
 \end{equation*} 
for $n\geq 1$, and the sequence $(\theta_n)_n$ is defined by
\beqs
\theta_n & = & \tau_n\wedge (t_n+h_n), \quad \mbox{ with } \quad 
\tau_n \; = \;  \inf\big\{ s\geq t_n~:~\P_{X^{n,\cdot}_s}\notin B_{L^2_\lambda(\Pc_2(\R^d))}(\mu,\eps)  \big\}\wedge T. 
\enqs
From \eqref{ineqDPPContSubSol} and the definition of the sequence $(\rho_n)_n$, we have
\beqs
\varphi(t_n,\mu_n)+\rho_n + \frac{\eta h_n}{2} & \geq & \int_U\E\Big[ \int_{t_n}^{\theta_n} f\big(u, X^{n,u}_s,\alpha^{n,u}_s,\P_{X^{n,\cdot}_s}, \P_{\alpha^{n,\cdot}_s}\big)\,\d s  \Big]\,\lambda(\d u)\\
 & & \quad  + \;  \varphi\big(\theta_n, \P_{X^{n,\cdot}_{\theta_n}}\big),
\enqs
and thus by applying It\^o formula in Theorem \ref{ito},  we get
\beqs
\frac{1}{h_n}\int_{t_n}^{\theta_n} \Big\{
\partial_t\varphi(s,\mu_{n,s}) + \int_U \Hc(u,s,\pi_{n,s},\varphi)\lambda(\d u)\Big\}\,\d s   
& \leq &  \frac{\rho_n}{h_n} + \frac{\eta}{2}. 
\enqs
where we set again $\mu_{n,s}$, $\pi_{n,s}$ for $\P_{X^{n,\cdot}_s}$, $\P_{(X^{n,\cdot}_s,\alpha^{n,\cdot}_s)}$.  
From \eqref{eqContSubsol},  this yields 
\beq\label{LastIneqContSubSol}
\frac{\rho_n}{h_n} + \eta\Big(\frac{1 }{2}-\frac{\theta_n-t_n}{h_n}\Big)  & \geq & 0\;.
\enq
Now, we notice that $\tau_n$ $\geq$ $\gamma_n$ for $n$ large enough, where
\beqs
\gamma_n & := & \inf\Big\{ s\geq t_n~:~\int_U\E\Big[|X^{n,u}_s-\xi^{u,n}|^2\Big]\lambda(du)\geq  \frac{\eps^2}{2}  \Big\}\wedge T.
\enqs
From Assumption \ref{HamiltonianAss} (i) and classical estimates on diffusion processes, we have
$\inf_{n\geq 1} (\gamma_n-t_n)$ $>$ $0$, and thus 
\beqs
\frac{\theta_n-t_n}{h_n} & \xrightarrow[n\rightarrow+\infty]{} & 1. 
\enqs
Therefore, we get a contradiction by sending $n$ to $\infty$ in \eqref{LastIneqContSubSol}.

\vspace{1mm}

\textbf{4.} {\it Viscosity subsolution property on $\{T\}\times L^2_\lambda(\Pc_2(\R^d))$.} 
Let $(t_n,\mu_n)_n$ be a sequence of $[0,T)\times L^2_\lambda(\Pc_2(\R^d))$ such that
\beqs
(t_n,\mu_n,\upsilon(t_n,\mu_n)) & \xrightarrow[n\rightarrow+\infty]{} & (T,\mu,\upsilon_*(T,\mu)). 
\enqs 
By definition of the function $\upsilon$, there exists a control $\alpha^n\in \Ac$ such that
\begin{align} 
\upsilon(t_n,\mu_n) + \frac{1}{n} & \geq \;  \int_U\E\Big[ \int_{t_n}^T f\big(u, X^{n,u}_s,\alpha^{n,u}_s,\P_{X^{n,\cdot}_s}, \P_{\alpha^{n,\cdot}_s}\big)\,\d s \\
& \qquad \quad \quad \quad + \;  g\big(u, X^{n,u}_T, \P_{X^{n,\cdot}_T}\big) \Big]\,\lambda(\d u)\;.
\end{align} 
From Assumption \ref{HamiltonianAss} (i) and classical estimates on diffusion processes
we get
\beqs
\int_U\E\Big[ \int_{t_n}^T f\big(u, X^{n,u}_s,\alpha^{n,u}_s,\P_{X^{n,\cdot}_s}, \P_{\alpha^{n,\cdot}_s}\big)\,
\d s \Big]\,\lambda(\d u) & \xrightarrow[n\rightarrow+\infty]{} & 0\;.
\enqs
Therefore, using the continuity of $g$, we get by sending $n$ to $+\infty$ 
\beqs
 \upsilon_*(T,\mu) & \geq & \displaystyle
\int_U \int_{\R^d} g(u,x,\mu)\,\mu^u(\d x)\,\lambda(\d u)\;.
\enqs
\end{proof} 

\begin{Remark}
From Definition \ref{DEFSolVisco}, we notice that if $w\in\tilde C^{1,2}([0,T]\times L^2_\lambda(\Pc_2(\R^d)))$
is a viscosity solution to  \eqref{HJB}-\eqref{TCHJB}, then $w$ is a classical solution as one can take $\varphi=w$ as a test function. In particular, under the additional assumption $\upsilon\in\tilde C^{1,2}([0,T]\times L^2_\lambda(\Pc_2(\R^d)))$,  we get from Theorem \ref{ThmVisco} that $\upsilon$ is a classical solution to  \eqref{HJB}-\eqref{TCHJB}.
\end{Remark}


\appendix

\section{Some results on the collection of state equations 
} 
\label{secappenthmexuuniq}

\subsection{Proof of Theorem \ref{exuniq}}

We write the proof in the case $b\equiv 0$, the general case being completely similar.

We fix $\xi\in\Ic_t$ and $\alpha\in\Ac$. Adapting an idea in 
Proposition 2.1 \cite{bayetal23},
the existence for equation \eqref{stateeq} will be obtained by a fixed point argument in the space $L^2_\lambda(\Pc_2(C_{[t,T]}))$. For any $\nu$ in this space and for any $u\in U$ let us denote $X^{\nu,u}$ the solution to
\begin{equation}\label{stateeqwithnu}
     \left\{
\begin{array}{l}
    d X^{\nu,u}_s =  \sigma\left(u,X^{\nu,u}_s,\alpha^u_s, \left(\nu^{v}_s \right)_{v},\P_{\alpha^{\cdot}_s}\right) \d W^u_s,
    \\
    \alpha_s^u=\alpha(u,s, W^u_{\cdot\wedge s},Z^u),\;\; s\in [t,T],
    \\
    X_t^{\nu,u}=\xi^u\;,\quad u\in U.
\end{array}\right.
 \end{equation}
 These equations 
 differ from the original ones because the given $\left( \nu_s^v \right)_{v}$ replaces
$ \left( \P_{X^{v}_s} \right)_{v}$. The equations are no longer coupled so that they can be solved individually. Under our assump\-tions each of them satisfies  standard Lipschitz conditions and it has a 
continuous $\F^u$-adapted solution,
 unique up to a $\P$-null set. 
Let us define ${\bf \Psi}(\nu)=({\bf \Psi}(\nu)^u)_u$   setting ${\bf \Psi}(\nu)^u= \P_{X^{\nu,u}}$,  the law of the process $X^{\nu,u}$ seen as a probability on $C_{[t,T]}^d$. 
We will prove the following claim:
\begin{equation}
\begin{array}{c}
    \label{measurabilityclaim}
\text{the map }
u\mapsto\Lc (
X^{\nu,u},W^u_{[0,T]},Z^u)  \text{ is Borel measurable }\\
\text{from } U \text{ to }
\Pc_2(C^d_{[t,T]}\times C^\ell_{[0,T]}\times  (0,1)).
\end{array}
\end{equation}
Admitting this for a moment, standard estimates on the stochastic equation in \eqref{stateeqwithnu} show that
\begin{equation*}
\E\Big[\sup_{s\in [t,T]}|X_s^{\nu,u}|^2\Big] \le
c\left(1+ \E[|\xi^u|^2]
+\int_0^T\Big\{\E[d_A(\alpha_s^u,0)^2]+ \Wc_2(\nu_s^u,\delta_0)^2  
+\Wc_2(\P_{\alpha_s^v},\delta_0)^2\Big\}\,
\d s\right)
\end{equation*}
for some constant $c>0$ (that does not depend on $\nu$). Noting that  $\Wc_2(\P_{\alpha_s^u},\delta_0)^2\le \E[d_A(\alpha_s^u,0)^2]$ and
recalling the admissi\-bi\-li\-ty condition 
\eqref{alphaadmissible} we find
$\|X^\nu\|^2=
\int_U
\E[\sup_{s\in [t,T]}|X_s^{\nu,u}|^2]\,\lambda(\d u)
<\infty $ and so $X^\nu\in\Sc_t$. 
This implies that 
$( \P_{X^u})_u$ belongs to  $L^2_\lambda(\Pc_2(C_{[t,T]}^d))$, so that the map
 ${\bf \Psi}:L^2_\lambda(\Pc_2(C_{[t,T]}^d))\to L^2_\lambda(\Pc_2(C_{[t,T]}^d))$ is well defined.

We will next prove the following claim:
\begin{equation}\label{fixedpointPsi}
{\bf \Psi} \text{ has unique fixed point } \bar\nu \text{ in } 
L^2_\lambda(\Pc_2(C_{[t,T]}^d)).
\end{equation} 
This leads immediately to the required conclusion. Indeed, the process   $X^{\bar\nu}$ corresponding to the fixed point is  clearly a solution. Moreover,
if $(\tilde X^u)_u\in \Sc_t$ is another solution then its law  
$(\P_{\tilde X^u})_u$ also belongs to $L^2_\lambda(\Pc_2(C_{[t,T]}^d))$ and it is a fixed point of ${\bf \Psi}$; it must therefore coincide with $\bar\nu$ and it follows that 
 $X^u=\tilde X^u$  for $\lambda$-almost all $u\in U$ since they are both solutions to the same equations \eqref{stateeqwithnu}.

In order to conclude the proof we have to prove the two claims.

\medskip

 {\it Step I: proof of claim \eqref{measurabilityclaim}}.
 
We note that $\hat{w}\oplus \check{w}$ depends on $\check{w}$ only through its increments, namely denoting $\check{w}-\check{w}_t$ the function $(\check{w}(s)-\check{w}(t))_{s\in [t,T]}$ we have
$\hat{w}\oplus \check{w}=\hat{w}\oplus (\check{w}- \check{w}_t)$.

Given $t\in [0,T]$ and a Borel measurable function $\alpha: U\times [0,T]\times C_{[0,T]}^\ell\times (0,1)\to A
$, we define a Borel measurable function   $\tilde\alpha: U\times [0,T]\times C_{[t,T]}^\ell\times C_{[0,t]}^\ell\times (0,1)\to A$
setting
\begin{equation}\label{operatorbeta}
    \tilde{\alpha}(u,s,\check{w},\hat{w},z)=\alpha(u,s,\hat{w}\oplus \check{w},z)
\end{equation}
for $u\in U$, $s\in [0,T]$, $\hat{w}\in C^\ell_{[0,t]}$,  $\check{w}\in C^\ell_{[t,T]}$, $z\in (0,1)$. We note that this formula establishes a bijection between those classes of functions whose inverse is
\beqs
\alpha(u,s,w,z) & = & \tilde{\alpha}(u,s, w_{ [0,t]},w_{[t,T]},z),
\enqs
where we recall that $w_{ [0,t]}$, $w_{[t,T]}$ denote the restrictions of $w\in C^\ell_{[0,T]}$ to the indicated intervals. 
Finally we note that, for $s\in [t,T]$,
\begin{equation}\label{operatorbeta2}
\alpha(u,s,w_{s\wedge \cdot}, z)  =   \tilde{\alpha}(u,s,w_{s\wedge \cdot} ,w_{ [0,t]},z).
\end{equation}
where we write $w_{s\wedge \cdot}$ instead of the more cumbersome $\big(w_{[t,T]}\big)_{s\wedge \cdot}$.

Below we need a similar  notation for functions defined on spaces of paths. Given
$\Phi:C^d_{[t,T]}\times
C^\ell_{[0,T]}\times (0,1)\to\R$ 
it is convenient to consider the function $\tilde{\Phi}:C^d_{[t,T]}\times
C^\ell_{[t,T]}\times C^\ell_{[0,t]}\times (0,1)\to\R$
defined   by
\begin{equation}\label{operatorbetaphi}
\tilde{\Phi}(x,\check{w},\hat{w},z)=
\Phi(x,\hat{w}\oplus\check{w},z),
\quad
x\in C^d_{[t,T]}, \check{w}\in C^\ell_{[t,T]}, \hat{w}\in C^\ell_{[0,t]},
  z\in  (0,1).
\end{equation}

Using these notations
we write equation \eqref{stateeqwithnu} in a different way. Recalling 
 $\eqref{operatorbeta}$ and $\eqref{operatorbeta2}$ we first have
$$\alpha_s^u=\tilde{\alpha}(u,s, W^u_{\cdot\wedge s},W^u_{[0,t]},Z^u)=
\tilde{\alpha}(u,s, W^u_{\cdot\wedge s}-W^u_t,W^u_{[0,t]},Z^u)\;,\quad s\in[t,T]\;.
$$ 
We note that $( W^u_{[0,t]},Z^u)$ is independent of the increments  $W^u_{\cdot\wedge s}-W^u_t$ and the law 
of $(W^u,Z^u)$ is the product $\mathbb{W}_T\otimes m$ of the Wiener measure $\mathbb{W}_T$ on $C^\ell_{[0,T]}$ and the Lebesgue measure $m$ on $(0,1)$. Given a bounded continuous $\phi:A\to\R$ we have
\begin{align}
\E\Big[
\phi (\alpha^u_s)
\Big]
&=
\E\Big[
\phi \Big(\tilde{\alpha}(u,s, W^u_{\cdot\wedge s},W^u_{[0,t]},Z^u)\Big)
\Big]
\\
&=
\int_{ C^\ell_{[0,T]}\times  (0,1)}
\E\Big[
\phi\Big(\tilde{\alpha}(u,s, W^u_{\cdot\wedge s},w'_{[0,t]},z')\Big)\Big]\,
\Lc (W^{u},Z^u)(\d w'\,\d z')
\\
&=
\int_{ C^\ell_{[0,T]}\times  (0,1)}
\E\Big[
\phi\Big(\tilde{\alpha}(u,s, W^u_{\cdot\wedge s},w'_{[0,t]},z')\Big)\Big]\,
(\mathbb{W}_T\otimes m)(\d w'\,\d z').
\end{align} 
Setting $
 \alpha_s^{u,w,z}=\tilde{\alpha}(u,s,   W^u_{\cdot\wedge s},w_{[0,t]},z)$ this 
shows that the laws $\P_{\alpha^u_s}$  may be written
\begin{equation}\label{palphaint}
    \P_{\alpha^u_s}=
    \int_{ C^\ell_{[0,T]}\times  (0,1)}
    \P_{\alpha_s^{u,w',z'}}\,
(\mathbb{W}_T\times m)(\d w'\,\d z').
\end{equation}

Equation \eqref{stateeqwithnu} then becomes
\begin{equation}\label{stateeqwithnurewritten}
     \left\{
\begin{array}{l}
    d X^{\nu,u}_s \!
    =  
    \!\sigma
    \!
    \left(u,X^{\nu,u}_s,\alpha^u_s,  ( \nu_{s}^v  )_{v},\left(  
    \int_{ C^\ell_{[0,T]}\times  (0,1)}
    \P_{\alpha_s^{v,w',z'}}\,
(\mathbb{W}_T\times m)(dw'\,dz')
    \right)_{v}\right)\d W^u_s,
    \; s\in [t,T],
    \\
    X_t^{\nu,u}=\xi^u,
    \\
    \alpha_s^u=\tilde{\alpha}(u,s, W^u_{\cdot\wedge s},W^u_{[0,t]},Z^u).
\end{array}\right.
 \end{equation}
Let us consider the analogue of this   equation where the random elements $\xi^u$, $W^u_{[0,t]}$, $Z^u $ are ``freezed'' at given points $x\in\R^d$, $w\in C^\ell_{[0,t]}$, $z\in (0,1)$, namely
\begin{equation}\label{Xuxwz}
     \left\{
\begin{array}{l}
    d   X^{\nu,u,x,w,z}_s 
    \!\!
    =  \sigma\!\left(
    \!
    u,  X^{\nu,u,x,w,z}_s
    \!\!
    ,\alpha^{u,w,z}_s\!\!,  ( \nu^{v}_s  
 )_{v},\left(
 \int_{ C^\ell_{[0,T]}\times  (0,1)}
    \P_{\alpha_s^{v,w',z'}}\,
(\mathbb{W}_T\times m)(dw'\,dz')\right)_{v}\right) \d  W^u_s
    \\
    X_t^{\nu,u,x,w,z}=x,
    \\
 \alpha_s^{u,w,z}=\tilde{\alpha}(u,s,   W^u_{\cdot\wedge s},w,z).
\end{array}\right.
 \end{equation}
For fixed $u\in U$, this is a stochastic equation depending measurably on the parameters $x,w,z$ and it admits as a solution a measurable function $(\omega,s,x,w,z)\mapsto X_s^{\nu,u,x,w,z}(\omega)$. 
Measurability is understood in the following sense. Since the equation is driven by the increments of the Brownian motion $(W_{s}^u-W_t^u)_{s\ge t}$ the solution is  predictable with respect to the corresponding filtration, i.e. measurable for the corresponding $\sigma$-algebra on $[t,T]$, say $\Pc^t$;  measurability of $(\omega,s,x,w,z)\mapsto X_s^{\nu,u,x,w,z}(\omega)$ is understood with respect to the $\sigma$-algebra $\Pc^t\otimes \Bc(\R^d\times C^\ell_{[t,T]}\times (0,1))$: see Stricker-Yor and the references therein.
Therefore we may consider the composition 
$X^{\nu,u,\xi^u,W^u_{[0,t]},Z^u}$ obtained substituting $(x,w,z)$ with $(\xi^u,W^u_{[0,t]},Z^u)$. Using the fact that the latter is independent of 
$X^{\nu,u,x,w,z}$ we may see that this is well defined; indeed, if $\tilde X^{\nu,u,x,w,z}$ is another solution to \eqref{Xuxwz} with the same measurability properties we have
$$
G(x,w,z):=
\E[ \sup_{s\in[t,T]} | X_s^{\nu,u,x,w,z}-\tilde X_s^{\nu,u,x,w,z}|^2]=0
$$
for every $x,w,z$ and by independence
$$
\E[ \sup_{s\in[t,T]} | X_s^{\nu,u,\xi^u,W^u_{[0,t]},Z^u}-\tilde X_s^{\nu,u,\xi^u,W^u_{[0,t]},Z^u}|^2]
=\E[G(\xi^u,W^u_{[0,t]},Z^u)] =0
$$
so that $X^{\nu,u,\xi^u,W^u_{[0,t]},Z^u}$ and $\tilde X^{\nu,u,\xi^u,W^u_{[0,t]},Z^u}$
are indistinguishable. By similar arguments one concludes that  the process $X^{\nu,u,\xi^u,W^u_{[0,t]},Z^u}$
  satisfies 
equation
\eqref{stateeqwithnurewritten}
and therefore  it coincides with
 $X^{\nu,u}$, up to a $\P$-null set, for $\lambda$-almost all $u$.

 Our aim is to prove that  the law
 $\Lc (
X^{\nu,u},W^u_{[0,T]},Z^u)$  (a measure on 
$C^d_{[t,T]}\times C^\ell_{[0,T]}\times  (0,1)$) depends in a   measurable way on $u\in U$. To this end we  use the criterion \eqref{meascriterion} and we 
consider the integral of an arbitrary   bounded continuous function $\Phi:C^d_{[t,T]}\times
C^\ell_{[0,T]}\times (0,1)\to \R$ that we  write  in the form
\begin{align}
\int_{C^d_{[t,T]}\times C^\ell_{[0,T]}\times  (0,1)}
\Phi(x,w,z)\,
\Lc (
X^{\nu,u},W^{u}_{[0,T]},Z^u)(\d x\,\d w\,\d z)
=
\E\bigg[ {\Phi}\left(
X^{\nu,u}, W^u_{[0,T]}, 
Z^u\right)\bigg]
\\
\qquad =
\E\bigg[ {\Phi}\left(
X^{\nu,u,\xi^u,W^u_{[0,t]},Z^u}, W^u_{[0,T]}, 
Z^u\right)\bigg]
=
\E\bigg[\tilde{\Phi}\left(
X^{\nu,u,\xi^u,W^u_{[0,t]},Z^u}, W^u_{[t,T]}, 
W^u_{[0,t]},Z^u\right)\bigg],
\end{align}
using the notation $\tilde{\Phi}$ introduced in \eqref{operatorbetaphi}.
We recall that we may replace $W^u_{[t,T]}$ by its increments 
$W^u_{[t,T]}-W^u_{t}$, which are independent of $(W^u_{[0,t]},Z^u)$.
Noting that  equation \eqref{Xuxwz} is driven by  $W^u_{[t,T]}-W^u_{t}$ we conclude that the pair $(X^{\nu,u,x,w,z},W^u_{[t,T]}-W^u_{t})$ is independent of $(W^u_{[0,t]},Z^u,\xi^u)$. 
It follows that 
\begin{align}&
\E\bigg[ {\Phi}\left(
X^{\nu,u}, W^u_{[0,T]}, 
Z^u\right)\bigg]
\\&\qquad
=
\int_{
C^\ell_{[0,t]}\times  (0,1)\times \R^d
}
\E\bigg[\tilde{\Phi}\left(
X^{\nu,u,x,w,z}, W^u_{[t,T]}, 
w,z\right)\bigg]\,
\Lc (
W^u_{[0,t]},Z^u,\xi^u)(\d w\,\d z\,\d x).
\end{align}

Now let us take an arbitrary complete probability space $(\hat\Omega,\hat\Fc, \hat\P)$ with an  $\R^\ell$-valued standard Brownian motion  $\hat W$ and denote $\hat\F=(\hat\Fc_t)_{t\in [0,T]}$ the corresponding completed Brownian filtration.
For every $u\in U$, $x\in\R^d$, $w\in C^\ell_{[0,t]}$, $z\in (0,1)$ we consider the equation
\begin{equation}\label{stateeqauxnu}
     \left\{
\begin{array}{l}
    d \hat X^{\nu,u,x,w,z}_s\!\! =  \sigma\!\left(\!u,\hat X^{\nu,u,x,w,z}_s\!,\hat\alpha^{u,w,z}_s\!, \left(  \nu_{s}^v \right)_{v},\!\left(     
 \int_{ C^\ell_{[0,T]}\times  (0,1)}
    \hat\P_{\hat\alpha_s^{v,w',z'}}\,
(\mathbb{W}_T\times m)(\d w'\,\d z')
\right)_{v}\right)\d \hat W_s
    \\
    \hat X_t^{\nu,u,x,w,z}=x,
    \\
\hat\alpha_s^{u,w,z}=\tilde{\alpha}(u,s, \hat W_{\cdot\wedge s},w,z).
\end{array}\right.
 \end{equation}
Since $\hat W$ does not depend on $u$, this is a stochastic equation depending measurably on all the parameters $u,x,w,z$ (including $u$) and it admits as its solution a measurable function $(\omega,s,u,x,w,z)\mapsto \hat X_s^{\nu,u,x,w,z}(\omega)$. 
Similar as before, measurability is understood with respect to the $\sigma$-algebra $\Pc^t\otimes \Bc(U\times \R^d\times C^\ell_{[t,T]}\times (0,1))$: see Stricker-Yor.
Comparing this equation with \eqref{Xuxwz} we see that they have the same coefficients and are both driven by the increments of Brownian motions - $W^u$ and $\hat W$ respectively - on the interval $[t,T]$. 
As we have strong uniqueness to the considered SDEs, we conclude that on the space $C^d_{[t,T]}\times C^\ell_{[t,T]}$ the law of $(X^{\nu,u,x,w,z},W^u-W^u_t)$ under $\P$ is the same as the  
law of $(\hat X^{\nu,u,x,w,z},\hat W-\hat W_t)$ under $\hat\P$. It follows that
\begin{align}\label{formulaforjointlaw}&
\E\bigg[\Phi\left(
X^{\nu,u}, W^u_{[0,T]}, 
Z^u\right)\bigg]
\\&\qquad
=
\int_{
C^\ell_{[0,t]}\times  (0,1)\times \R^d
}
\hat\E\bigg[\tilde\Phi\left(
\hat X^{\nu,u,x,w,z}, \hat W_{[t,T]}, 
w,z\right)\bigg]\,
\Lc (
W^u_{[0,t]},Z^u,\xi^u)(\d w\,\d z\,\d x).
\end{align}   
Now what occurs under the sign $\hat\E$ is a measurable function of $u,x,w,z$. Since we are assuming that
$\Lc (
W^u_{[0,t]},Z^u,\xi^u)$ depends measurably on $u$ we conclude that $\Lc (
X^{\nu,u},W^u_{[0,T]},Z^u)$ is also a measurable function of $u$ (as a measure on 
$C^d_{[t,T]}\times C^\ell_{[0,T]}\times  (0,1)$).
 This concludes the proof of the claim \eqref{measurabilityclaim}.

We note for later use that, when the function $\Phi$ only depends on its first argument, equation 
\eqref{formulaforjointlaw} 
becomes a formula for the map 
 ${\bf \Psi}(\nu)=({\bf \Psi}(\nu)^u)_u$ introduced at the beginning of the proof. Indeed, recalling that ${\bf \Psi}(\nu)^u= \P_{X^{\nu,u}}$, it follows from \eqref{formulaforjointlaw} that for every continuous bounded $\phi: C^d_{[t,T]}\to \R$, 
\begin{align}\label{formulaforPsi}
\int_{
C^d_{[t,T]} 
} \phi(x)\, 
{\bf \Psi}(\nu)^u(\d x)&=
\E\bigg[\phi\left(
X^{\nu,u}\right)\bigg]
\\&
=
\int_{
C^\ell_{[0,t]}\times  (0,1)\times \R^d
}
\hat\E\bigg[ \phi\left(
\hat X^{\nu,u,x,w,z}\right)\bigg]\,
\Lc (
W^u_{[0,t]},Z^u,\xi^u)(\d w\,\d z\,\d x).
\end{align}   
 
\medskip

{\it Step II: proof of claim \eqref{fixedpointPsi}.}

For any $r\in[t,T]$ we consider the space $L^2_\lambda(\Pc_2(C_{[t,r]}^d))$   with the cor\-respon\-ding distance, that will be denoted $\bd_t^r$.
Given  $\nu,\mu\in L^2_\lambda(\Pc_2(C_{[t,T]}^d))$, let  $(X^{\nu,u})_u$,  $(X^{\mu,u})_u$ denote the corresponding solutions to
\eqref{stateeqwithnu}. 
For suitable constants 
$C_1,C_2$
we obtain
\begin{align}
&    
\E\Big[\sup_{s\in [t,r]}|X_s^{\nu,u}-X_s^{\mu,u}|^2\Big]
\\&\quad
\le C_1\,\E\,\int_t^r\Big|
\sigma\left(u,X^{\nu,u}_s,\alpha^u_s, \left(\nu_s^v \right)_{v},\left( \P_{\alpha^{v}_s} \right)_{v}\right)-
\sigma\left(u,X^{\mu,u}_s,\alpha^u_s, \left(\mu_s^v \right)_{v},\left( \P_{\alpha^{v}_s} \right)_{v}\right)\Big|^2\,\d s
\\&\quad
\le C_2\,\int_t^r\Big\{
\E\,[|X^{\nu,u}_s-X^{\mu,u}_s|^2]+\bd(\nu_s,\mu_s)^2\Big\}\,\d s
\\&\quad
\le C_2\,\int_t^r\Big\{\sup_{q\in [t,s]}
\E\,[|X^{\nu,u}_q-X^{\mu,u}_q|^2]+\bd_t^s(\nu,\mu)^2\Big\}\,\d s.
\end{align}
Next we note that 
$(X^{\nu,u},X^{\mu,u})_u$ (a collection of $\R^{d\times d}$-valued processes) 
satisfies a stochastic equation to which the assumptions of Theorem \ref{exuniq} apply. In particular, it starts 
at time $t$ from the initial condition  $(\xi^{u},\xi^{u})_u$, which is admissible, since the map $u\mapsto \Lc(\xi^{u},\xi^{u}, W_{[0,t]},Z^u) $ is Borel measurable. 
So we can apply the already proved claim \eqref{measurabilityclaim} and conclude in particular that 
$u\mapsto \Lc(X^{\nu,u},X^{\mu,u}) $ is Borel measurable.  It follows that both sides of the previously displayed inequality are measurable functions of $u$.
Integrating with respect to $\lambda(du)$ and applying the Gronwall lemma yields
\begin{equation}
    \label{Phicontinuousestimate}
\bd_t^r(\Psi(\nu),\Psi(\mu))^2\le \int_U
\E\Big[\sup_{s\in [t,r]}|X_s^{\nu,u}-X_s^{\mu,u}|^2\Big]\,\lambda(\d u) \le
C   \int_t^r 
 \bd_t^s(\nu,\mu)^2\, \,\d s, 
 \qquad r\in [t,T],
\end{equation}
for some constant $C>0$ that only depends on the Lipschitz constants of $b,\sigma$, on $T$ and on $\lambda(U)$. Setting $r=T$ we obtain 
\begin{equation}
\label{Phicontinuousestimatebis}
\bd_t^T(\Psi(\nu),\Psi(\mu))^2 \le \|X^{\nu}-X^{\mu}\|^2
\le 
C \cdot (T-t)\,  \bd_t^T(\nu,\mu)^2
\end{equation}
which proves in particular the continuity of $\Psi$. 
Iterating
 \eqref{Phicontinuousestimate} one proves that
\begin{equation}\label{psiiterates}
\bd_t^r({\bf \Psi}^{(k+1)}(\nu),
{\bf \Psi}^{(k+1)}(\mu))
^2\le
\frac{C^{k+1}}{k!} \int_t^r
\bd_t^s(\nu,\mu)^2(r-s)^k\,\d s. 
 \end{equation}
Choosing an arbitrary $\nu^{(0)}\in L^2_\lambda(\Pc_2(C_{[t,T]}^d))$ and setting $\nu^{(k+1)}={\bf \Psi}(\nu^{(k)})$ for $k\ge 0$, it follows   that
$$ \bd_t^T(\nu^{(k+1)},\nu^{(k)}) ^2\le
\frac{C^k(T-t)^k}{k!}
\bd_t^T(\nu^{(1)},\nu^{(0)})^2 .
$$
Now  standard arguments  allow to conclude  that  the sequence $(\nu^{(k)})_k$ is Cauchy for $\bd_t^T$ and it converges in $L^2_\lambda(\Pc_2(C_{[t,T]}^d))$ to a limit, denoted $\bar \nu$, which is a fixed point of the map  ${\bf \Psi}$. The uniqueness of the fixed point follows from \eqref{psiiterates}. 
The claim \eqref{fixedpointPsi} is proved.
\qedsymbol

\subsection{Uniqueness in law}

\begin{Proposition}\label{EQUALAWSOLSDE}
Let $t\in[0,T]$ and $\alpha: U\times [0,T]\times C_{[0,T]}^\ell\times (0,1)\to A$ a Borel measurable function
Fix $\xi=(\xi^u)_u\in \Ic_t$ and denote by $X=(X^u)_u$ the unique solution to \reff{stateeq}.

For $u\in U$, we consider an $\R^\ell$-valued random process $(\tilde W^u_t)_{t\geq0}$, a real random variable $\tilde Z^u$ and an $\R^d$-valued random variable $\tilde \xi^u$ 
(possibly defined on a different probability space) 
such that
\begin{equation}
\label{eqinitiallawsbis}
\Lc(\xi^u,W^u_{[0,t]},Z^u)  =  \Lc(\tilde{\xi}^u,\tilde{W}^u_{[0,t]},\tilde{Z}^u)
\end{equation}
for all $u\in U$.
We define $(\tilde{X}^u)_u$ as the unique solution to the SDE
 \begin{equation}
     \left\{
\begin{array}{rcl}
    d \tilde{X}^u_s & = & b\left(u,\tilde{X}^u_s, \tilde{\alpha}^u_s,  \P_{\tilde{X}^{\cdot}_s}, \P_{\tilde{\alpha}^{\cdot}_s} \right)\d s\\
     & & + \sigma\left(u,\tilde{X}^u_s,\tilde{\alpha}^u_s, 
     \P_{\tilde{X}^{\cdot}_s},\P_{\tilde{\alpha}^{\cdot}_s}\right)\d \tilde{W}^u_s,
    \;\; s\in [t,T],
    \\
    \tilde{X}_t^u & = & \tilde{\xi}^u,    \;\; u\in U,
\end{array}\right.
 \end{equation}
where $\tilde \alpha$ is defined by
 \beqs
\tilde{\alpha}_t^u={\alpha}(u,t, \tilde{W}^u_{\cdot\wedge t},\tilde{Z}^u),
\quad t\in [0,T],\,u\in U.
\enqs
Then we have
\beqs
\Lc(X^u,W^u_{[0,T]},Z^u) & = & \Lc(\tilde X^u,\tilde W^u_{[0,T]},\tilde Z^u) 
\enqs
for all $u\in U$.
\end{Proposition}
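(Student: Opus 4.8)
The plan is to reduce the statement to the fixed--point construction used in the proof of Theorem~\ref{exuniq}, exploiting the fact that the law of the solution displayed there depends on the data only through the coefficients $b,\sigma$, the time $t$, the policy $\alpha$, and the collection of initial laws $\big(\Lc(W^u_{[0,t]},Z^u,\xi^u)\big)_{u\in U}$ --- and not on the underlying probability space. As in that proof I would treat the case $b\equiv 0$, the general case being identical. Recall that the unique solution $X=(X^u)_u$ is $X^{\bar\nu}=(X^{\bar\nu,u})_u$, where $\bar\nu$ is the unique fixed point in $L^2_\lambda(\Pc_2(C^d_{[t,T]}))$ of the map ${\bf \Psi}$ of \eqref{fixedpointPsi}, that ${\bf \Psi}(\nu)^u=\P_{X^{\nu,u}}$ has the explicit expression \eqref{formulaforPsi}, and that the auxiliary process $\hat X^{\nu,u,x,w,z}$ occurring there solves \eqref{stateeqauxnu} on a fixed space $(\hat\Omega,\hat\Fc,\hat\P)$ carrying a single Brownian motion $\hat W$, depending only on $\alpha,t,\sigma,\nu$ (and $d,\ell$), not on $(W^u,Z^u,\xi^u)$. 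The tilde system is handled by the same construction, yielding a map $\widetilde{\bf \Psi}$ with the expression \eqref{formulaforPsi}, only with $\Lc(W^u_{[0,t]},Z^u,\xi^u)$ replaced by $\Lc(\tilde W^u_{[0,t]},\tilde Z^u,\tilde\xi^u)$; here $\tilde\xi$ is admissible at time $t$ because \eqref{eqinitiallawsbis} transfers the measurability in $u$ and the square integrability from $\xi$, so that Theorem~\ref{exuniq} indeed applies to the tilde data.

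First I would show ${\bf \Psi}=\widetilde{\bf \Psi}$. By \eqref{eqinitiallawsbis} we have $\Lc(\xi^u,W^u_{[0,t]},Z^u)=\Lc(\tilde\xi^u,\tilde W^u_{[0,t]},\tilde Z^u)$ for every $u\in U$, hence also $\Lc(W^u_{[0,t]},Z^u,\xi^u)=\Lc(\tilde W^u_{[0,t]},\tilde Z^u,\tilde\xi^u)$, these measures differing only by a permutation of coordinates. Inserting this into \eqref{formulaforPsi}, and using that the inner integrand $\hat\E\big[\phi(\hat X^{\nu,u,x,w,z})\big]$ is one and the same measurable function of $(u,x,w,z)$ in both settings (it lives on the fixed auxiliary space), I get $\int\phi\,{\bf \Psi}(\nu)^u=\int\phi\,\widetilde{\bf \Psi}(\nu)^u$ for every bounded continuous $\phi$, every $\nu$, and every $u\in U$; so ${\bf \Psi}(\nu)^u=\widetilde{\bf \Psi}(\nu)^u$ for all $u$, i.e. ${\bf \Psi}=\widetilde{\bf \Psi}$. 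By the uniqueness of the fixed point in \eqref{fixedpointPsi}, these maps have the same fixed point; denote it $\bar\nu$.

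Next I would conclude with formula \eqref{formulaforjointlaw}, applied with the common $\nu=\bar\nu$ to both systems and to an arbitrary bounded continuous $\Phi$ on $C^d_{[t,T]}\times C^\ell_{[0,T]}\times(0,1)$: for each $u\in U$ it expresses $\E[\Phi(X^{\bar\nu,u},W^u_{[0,T]},Z^u)]$ as the integral of $\hat\E[\tilde\Phi(\hat X^{\bar\nu,u,x,w,z},\hat W_{[t,T]},w,z)]$ against $\Lc(W^u_{[0,t]},Z^u,\xi^u)$, and similarly for the tilde system with $\Lc(\tilde W^u_{[0,t]},\tilde Z^u,\tilde\xi^u)$. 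Since the integrand (same auxiliary process, same $\bar\nu$) and the integrating measures (by \eqref{eqinitiallawsbis}) both agree, the two right--hand sides coincide for every $u$, so $\E[\Phi(X^{\bar\nu,u},W^u_{[0,T]},Z^u)]=\E[\Phi(\tilde X^{\bar\nu,u},\tilde W^u_{[0,T]},\tilde Z^u)]$ --- under $\P$ and the tilde probability respectively --- for every bounded continuous $\Phi$. As bounded continuous functions determine Borel probability measures on the Polish space $C^d_{[t,T]}\times C^\ell_{[0,T]}\times(0,1)$, this gives $\Lc(X^{\bar\nu,u},W^u_{[0,T]},Z^u)=\Lc(\tilde X^{\bar\nu,u},\tilde W^u_{[0,T]},\tilde Z^u)$ for all $u\in U$; taking $X=X^{\bar\nu}$, $\tilde X=\tilde X^{\bar\nu}$ as the canonical solutions from the construction (any other solution in $\Sc_t$ differs only on a $\lambda$-null set of $u$'s) yields the statement.

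The step that I expect to need the most care is not a computation but making precise that ${\bf \Psi}$ is \emph{intrinsic}: that the frozen--parameter equation \eqref{stateeqauxnu}, and its solution, depend solely on $\alpha,t,\sigma$ and the dimensions, so that replacing the original noise by the tilde noise leaves them unchanged once the joint initial laws agree. This re-uses two ingredients already established in the proof of Theorem~\ref{exuniq}: strong uniqueness for the decoupled SDEs, which makes the law of $(\hat X^{\nu,u,x,w,z},\hat W-\hat W_t)$ coincide with that of $(X^{\nu,u,x,w,z},W^u-W^u_t)$ and of its tilde analogue; and the independence of $(X^{\nu,u,x,w,z},W^u-W^u_t)$ from $(W^u_{[0,t]},Z^u,\xi^u)$, used to turn expectations over the original noise into integrals over the frozen parameters. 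The remaining points --- the passage from $b\equiv0$ to general $b$, and the harmless $\lambda$-null--set caveat in the phrasing ``for all $u$'' --- are routine.
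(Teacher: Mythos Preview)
Your proof is correct and follows essentially the same strategy as the paper's: both rest on the fixed--point construction of Theorem~\ref{exuniq} and on formula \eqref{formulaforjointlaw}, exploiting that the auxiliary process $\hat X^{\nu,u,x,w,z}$ lives on a fixed space independent of the original noise, so that only the measures $\Lc(W^u_{[0,t]},Z^u,\xi^u)$ matter. The paper's (sketched) argument runs a Picard iteration $\nu^{n+1}={\bf\Psi}(\nu^n)$ starting from $\nu^0=\Lc(\xi)=\Lc(\tilde\xi)$, checks via \eqref{formulaforjointlaw} that the joint laws coincide at every step, and passes to the limit; you instead observe directly that ${\bf\Psi}=\widetilde{\bf\Psi}$ as maps on $L^2_\lambda(\Pc_2(C^d_{[t,T]}))$, hence share the same fixed point $\bar\nu$, and then apply \eqref{formulaforjointlaw} once with $\nu=\bar\nu$. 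Your organization is slightly cleaner in that it avoids the limit passage, but the two arguments are really the same in substance.
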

\proofname. We only  sketch the proof. 
In the proof of Theorem \ref{exuniq} the solution was obtained via a fixed point for the map $\nu\mapsto \Psi(\nu)$ introduced there. The fixed point can be obtained by a Picard iteration scheme $\nu^{n+1}=\Psi(\nu^n)$ starting from $\nu^0=\Lc(\xi)=\Lc(\tilde\xi)$. 
In view of
\eqref{eqinitiallawsbis},
formula
\eqref{formulaforjointlaw}
makes it clear that at each iteration we have
\beqs
\Lc(X^{\nu^n,u},W^u_{[0,T]},Z^u) & = & \Lc(\tilde X^{\nu^n,u},\tilde W^u_{[0,T]},\tilde Z^u) 
\enqs
for all $u\in U$. We know 
 that  the sequence $(\nu^n)$  converges in $L^2_\lambda(\Pc_2(C_{[t,T]}^d))$ to the fixed point.  This allows to pass to the limit in
\eqref{formulaforjointlaw}
 and conclude that
$\Lc(X^u,W^u_{[0,T]},Z^u)  =  \Lc(\tilde X^u,\tilde W^u_{[0,T]},\tilde Z^u)$ as required.
\ep



\section{Some auxiliary results} \label{secappen}

\begin{Proposition}\label{respresentation THM}
 Let $t\in[0,T]$ and $\xi\in \Ic_t$. Then, there exists a Borel map $\tilde{\xi}: U\times 
C^\ell_{[0,t]}\times (0,1)
\to\R^d$ such that
%
%
such that 
\beqs
\Lc\big(\tilde{\xi}^u( W_{[0,t]},Z), W_{[0,t]},Z\big) & = & \Lc\big({\xi}^u, W^u_{[0,t]},Z^u\big)
\enqs
for every $u\in U$ and for any choice of the random pair $(W,Z)$ (defined on an arbitrary probability space), where $W=(W_t)_{t\in [0,T]}$ is an $\R^\ell$-valued standard Brownian motion and $Z$ is a real random variable having uniform distribution in $(0,1)$ and independent of $W$.
\end{Proposition}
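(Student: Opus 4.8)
The statement asserts a ``universal'' representation: we want a single Borel map $\tilde\xi:U\times C^\ell_{[0,t]}\times(0,1)\to\R^d$ such that, plugging in \emph{any} Brownian motion $W$ and independent uniform $Z$, the triple $(\tilde\xi^u(W_{[0,t]},Z),W_{[0,t]},Z)$ has the prescribed law $\Lc(\xi^u,W^u_{[0,t]},Z^u)$ for every $u\in U$. The plan is to reduce this to a measurable version of the Skorohod (inverse-CDF) construction with a parameter. First, I would note that since $\xi\in\Ic_t$, by definition of $\Ic_t$ the map $u\mapsto \Lc(W^u_{[0,t]},Z^u,\xi^u)$ is Borel measurable from $U$ into $\Pc_2(C^\ell_{[0,t]}\times(0,1)\times\R^d)$; moreover each such law has the special structure that its marginal on $C^\ell_{[0,t]}\times(0,1)$ is the \emph{fixed} product measure $\mathbb{W}_t\otimes m$ (Wiener measure times Lebesgue measure), because $W^u$ is a standard Brownian motion and $Z^u$ is uniform and independent. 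So the whole family $(\Lc(W^u_{[0,t]},Z^u,\xi^u))_{u\in U}$ is a measurably-indexed family of probability measures on a fixed Polish product space, all sharing the same first-two-coordinate marginal.

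The key step is then to disintegrate: for each $u$ write $\Lc(W^u_{[0,t]},Z^u,\xi^u)(\d w\,\d z\,\d x)=\kappa^u(w,z;\d x)\,(\mathbb{W}_t\otimes m)(\d w\,\d z)$, where $\kappa^u$ is a regular conditional law of $\xi^u$ given $(W^u_{[0,t]},Z^u)$. Since $C^\ell_{[0,t]}\times(0,1)\times\R^d$ is Polish and the family is Borel in $u$, the disintegration can be chosen jointly measurable in $(u,w,z)$; this is a standard measurable-selection/disintegration fact (see e.g. the arguments underlying the parametrized Skorohod construction already invoked in the paper just after the definition of $\upsilon$, and the auxiliary measurability results of Appendix~\ref{secappen}). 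We now have a transition kernel $(u,w,z)\mapsto\kappa^u(w,z;\cdot)$ on $\R^d$. By a further standard extension of the Skorohod representation with parameters, there is a jointly Borel measurable map $h:U\times C^\ell_{[0,t]}\times(0,1)\times(0,1)\to\R^d$ such that, for every $(u,w,z)$, the image of Lebesgue measure on $(0,1)$ under $r\mapsto h(u,w,z,r)$ equals $\kappa^u(w,z;\cdot)$. The only remaining difficulty is that our single source of extra randomness is the one uniform variable $Z$, which is already being used as the second coordinate; I resolve this by pre-composing with a Borel isomorphism $\iota=(\iota_1,\iota_2):(0,1)\to(0,1)\times(0,1)$ that pushes Lebesgue measure to the product of Lebesgue measures (splitting the binary digits of $z$), and setting
\begin{align}\label{deftildexiproof}
\tilde\xi^u(w,z):=h\bigl(u,\,w,\,\iota_1(z),\,\iota_2(z)\bigr),\qquad u\in U,\ w\in C^\ell_{[0,t]},\ z\in(0,1).
\end{align}
Wait---this changes the joint law of $(\tilde\xi^u,W,Z)$ because it scrambles the $Z$-marginal. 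A cleaner route, which is the one I would actually carry out: absorb the uniform into the path instead, or more simply, observe we do not need the auxiliary randomness to be \emph{independent} of $Z$ at all --- we need $(\tilde\xi^u(W_{[0,t]},Z),W_{[0,t]},Z)\sim\Lc(\xi^u,W^u_{[0,t]},Z^u)$. Since the target law has marginal $\mathbb{W}_t\otimes m$ on $(w,z)$, and we need $\tilde\xi^u$ to realize the conditional law $\kappa^u(w,z;\cdot)$ \emph{given} the realized $(w,z)$, it suffices to have, for $(\mathbb{W}_t\otimes m)$-a.e.\ $(w,z)$, that $z$ itself --- conditionally on $w$ --- still carries a uniform variable independent enough to generate $\kappa^u(w,z;\cdot)$. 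This is not automatic, so the correct and simplest fix is the one used elsewhere in the paper: exploit that $W_{[0,t]}$ is infinite-dimensional and that $\mathbb{W}_t\otimes m$ is a non-atomic Borel measure on the Polish space $C^\ell_{[0,t]}\times(0,1)$, hence Borel-isomorphic to Lebesgue measure on $(0,1)$; under such an isomorphism the conditional-law requirement becomes: find $h:U\times(0,1)\to\R^d$ Borel with $h(u,\cdot)_\#\mathrm{Leb}=\mu^u$ where $\mu^u$ is the (measurably $u$-indexed) law of $\xi^u$, which is exactly the parametrized Skorohod construction already cited in the paper for constructing an admissible $\xi$ from a given $\mu\in L^2_\lambda(\Pc_2(\R^d))$.

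Concretely, I would proceed as follows. Let $\Theta:C^\ell_{[0,t]}\times(0,1)\to(0,1)$ be a Borel isomorphism with $\Theta_\#(\mathbb{W}_t\otimes m)=m$, which exists since $\mathbb{W}_t\otimes m$ is a non-atomic Borel probability measure on a Polish space. Consider the $u$-indexed family of laws $\rho^u:=\Lc(\xi^u,W^u_{[0,t]},Z^u)$ on $C^\ell_{[0,t]}\times(0,1)\times\R^d$; push it through $\Theta\times\mathrm{id}_{\R^d}$ to get laws $\hat\rho^u$ on $(0,1)\times\R^d$ whose first marginal is $m$, depending Borel-measurably on $u$. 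Disintegrate $\hat\rho^u(\d z'\,\d x)=\hat\kappa^u(z';\d x)\,m(\d z')$ with $(u,z')\mapsto\hat\kappa^u(z';\cdot)$ a jointly measurable kernel, then apply the parametrized inverse-CDF construction (coordinatewise in $\R^d$, using a further Borel splitting of $(0,1)$ into $d$ independent uniforms, all measurably in the parameter $(u,z')$) to obtain a Borel map $g:U\times(0,1)\times(0,1)\to\R^d$ with $g(u,z',\cdot)_\#m=\hat\kappa^u(z';\cdot)$; but again to avoid introducing a new variable, instead use that $\hat\rho^u$ itself, being a Borel probability on the standard Borel space $(0,1)\times\R^d$ with first marginal $m$, admits --- by the standard ``conditional quantile'' transform --- a Borel map $\hat h:U\times(0,1)\to\R^d$ such that $(\mathrm{id},\hat h(u,\cdot))_\#m=\hat\rho^u$ for every $u$; measurability in $u$ is inherited from the measurable dependence of $\hat\rho^u$ on $u$. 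Finally set $\tilde\xi^u(w,z):=\hat h\bigl(u,\Theta(w,z)\bigr)$. Then for any Brownian motion $W$ and independent uniform $Z$ on any probability space, $(W_{[0,t]},Z)\sim\mathbb{W}_t\otimes m$, so $\Theta(W_{[0,t]},Z)\sim m$, hence $\bigl(\Theta(W_{[0,t]},Z),\hat h(u,\Theta(W_{[0,t]},Z))\bigr)\sim\hat\rho^u$, and pushing back through $\Theta^{-1}\times\mathrm{id}$ gives $\bigl(W_{[0,t]},Z,\tilde\xi^u(W_{[0,t]},Z)\bigr)\sim\rho^u=\Lc(\xi^u,W^u_{[0,t]},Z^u)$, which is the claim (after reordering coordinates); joint Borel measurability of $\tilde\xi$ in $(u,w,z)$ is clear since $\Theta$ and $\hat h$ are Borel. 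The main obstacle is the measurable-parameter version of the disintegration/Skorohod step --- ensuring the transforms can be chosen jointly Borel in $u$ --- but this is a classical fact about standard Borel spaces and measurably indexed families of measures, and is precisely the type of result collected in Appendix~\ref{secappen}; the rest is bookkeeping with Borel isomorphisms of non-atomic measures.
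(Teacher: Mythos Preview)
Your overall strategy --- reduce via a measure-preserving Borel isomorphism and then invoke a parametrized representation --- is close in spirit to the paper's (which uses a Borel bijection $\psi$ composed with the CDF $F$, reducing to its Lemma~\ref{respresentation LEM}). However, the decisive step in your final argument has a genuine gap.

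You assert that an arbitrary Borel probability $\hat\rho^u$ on $(0,1)\times\R^d$ with first marginal $m$ ``admits --- by the standard `conditional quantile' transform --- a Borel map $\hat h:U\times(0,1)\to\R^d$ such that $(\mathrm{id},\hat h(u,\cdot))_\# m=\hat\rho^u$.'' This is false in general: the identity $(\mathrm{id},\hat h(u,\cdot))_\# m=\hat\rho^u$ says precisely that the conditional law of the $\R^d$-component given $z'$ is the Dirac $\delta_{\hat h(u,z')}$, so $\hat\rho^u$ must be supported on the graph of a function over $(0,1)$. A conditional-quantile (Knothe--Rosenblatt) construction would instead produce a map $g(u,z',r)$ depending on an \emph{extra} uniform $r$ --- the very variable you were trying to eliminate; the uniform law on $(0,1)\times[0,1]$ already gives a counterexample to your claim.

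The fix is to use the observation you recorded earlier but then abandoned: since $\xi^u$ is $\Fc_t^u$-measurable, we have $\xi^u=\underline\xi^u(W^u_{[0,t]},Z^u)$ a.s., so $\rho^u$ (hence $\hat\rho^u$) \emph{is} concentrated on a graph, and the jointly measurable disintegration kernel $(u,w,z)\mapsto\kappa^u(w,z;\cdot)$ is $(\mathbb{W}_t\otimes m)$-a.e.\ a Dirac mass. One can then extract its support point by any Borel functional on Diracs (e.g.\ the coordinatewise mean, which is jointly measurable in $(u,w,z)$), yielding $\tilde\xi^u(w,z)$ directly --- the detour through $\Theta$ is not even needed. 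The paper packages this differently: it pushes the \emph{entire} joint law $\Lc(\xi^u,W^u_{[0,t]},Z^u)$ to $(0,1)$ via a single Borel isomorphism, applies its parametrized Skorohod lemma (Lemma~\ref{respresentation LEM}) to the target space $S=\R^d\times C^\ell_{[0,t]}\times(0,1)$, and reads off the first component.
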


 We first need to prove the following result.
 
\begin{Lemma}\label{respresentation LEM}
For any family $(Y^u)_u$ of random variables uniformly distributed on $(0,1)$ and any family $(\Phi^u)_u$ of Borel maps from $(0,1)$ to some Polish space $S$ such that $u\mapsto \Lc(\Phi^u(Y^u))$ is Borel measurable, there exists a Borel map $\tilde \Phi:~U\times (0,1)\rightarrow S$ such that $\Lc(\Phi^u(Y^u))=\Lc(\tilde \Phi(u,Y))$ for every  $u\in U$ where $Y$ is any random variable uniformly distributed on $(0,1)$ (defined on an arbitrary probability space).
\end{Lemma}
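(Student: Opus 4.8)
The plan is to rephrase the statement as a parametrized Skorokhod construction and then reduce it, via the Borel isomorphism theorem, to the one‑dimensional case, where it is handled by the classical quantile (generalized inverse distribution) function.

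First I would observe that, since each $Y^u$ is uniformly distributed on $(0,1)$, the law $\rho^u:=\Lc(\Phi^u(Y^u))$ is simply the image $(\Phi^u)_*m$ of the Lebesgue measure $m$ on $(0,1)$ under $\Phi^u$; in particular it does not depend on the chosen $Y^u$, and likewise $\Lc(\tilde\Phi(u,Y))=\tilde\Phi(u,\cdot)_*m$ for any uniform $Y$. Thus the hypothesis reads: $u\mapsto\rho^u$ is Borel measurable from $U$ to $\Pc(S)$, the space of Borel probability measures on $S$ endowed with its weak‑topology Borel $\sigma$‑algebra — which, $S$ being Polish, coincides with the $\sigma$‑algebra generated by the evaluations $\rho\mapsto\rho(B)$, $B\in\Bc(S)$. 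The goal is to produce a jointly Borel map $\tilde\Phi:U\times(0,1)\to S$ with $\tilde\Phi(u,\cdot)_*m=\rho^u$ for every $u\in U$.

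Next I would reduce to $S=\R$. Since $S$ is Polish, the Borel isomorphism theorem yields a Borel isomorphism $\psi:S\to B$ onto a Borel set $B\subset\R$, with Borel inverse $\psi^{-1}:B\to S$; then $u\mapsto\psi_*\rho^u\in\Pc(\R)$ is again Borel measurable. Granting the real‑valued case, we obtain a jointly Borel $\Psi:U\times(0,1)\to\R$ with $\Psi(u,\cdot)_*m=\psi_*\rho^u$ for all $u$. Since $\psi_*\rho^u(B)=1$, the Borel set $N:=\{(u,z)\in U\times(0,1):\Psi(u,z)\notin B\}$ has $m$‑null $u$‑sections; fixing $s_0\in S$ and setting $\tilde\Phi(u,z):=\psi^{-1}(\Psi(u,z))$ on $N^c$ and $\tilde\Phi(u,z):=s_0$ on $N$ gives a jointly Borel map whose $u$‑section differs from $z\mapsto\psi^{-1}(\Psi(u,z))$ only on an $m$‑null set, so $\tilde\Phi(u,\cdot)_*m=(\psi^{-1})_*\psi_*\rho^u=\rho^u$ for every $u$.

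It remains to treat $S=\R$. I would set $F^u(x):=\rho^u((-\infty,x])$; for fixed $x$ this is Borel in $u$ (a fixed evaluation of $\rho^u$), while for fixed $u$ it is nondecreasing and right‑continuous in $x$, so by a routine argument — pointwise approximation by the jointly Borel dyadic step functions $\sum_k F^u(k2^{-n})\,\mathds{1}_{((k-1)2^{-n},\,k2^{-n}]}(x)$ — the map $(u,x)\mapsto F^u(x)$ is jointly Borel. Define the quantile function $G^u(z):=\inf\{x\in\R:F^u(x)\ge z\}$, which for $z\in(0,1)$ is a finite real since $F^u(x)\to0$ as $x\to-\infty$ and $F^u(x)\to1$ as $x\to+\infty$. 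Using right‑continuity of $F^u$ one checks $G^u(z)\le x\iff F^u(x)\ge z$, hence $\{u:G^u(z)\le x\}=\{u:F^u(x)\ge z\}$ is Borel for every $x$, i.e. $u\mapsto G^u(z)$ is Borel; since $z\mapsto G^u(z)$ is nondecreasing (indeed left‑continuous), the same dyadic approximation shows $(u,z)\mapsto G^u(z)$ is jointly Borel. Finally the classical identity $\P(G^u(Y)\le x)=\P(Y\le F^u(x))=F^u(x)$ for $Y\sim U(0,1)$ gives $G^u(\cdot)_*m=\rho^u$, so $\Psi:=G$ works, completing the reduction.

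The only delicate points are the two passages from ``Borel in $u$ for each fixed second variable and monotone in the second variable'' to ``jointly Borel'', which are standard, together with the bookkeeping of the $m$‑null exceptional sets in the transfer through $\psi$; I do not expect a genuine obstacle. Alternatively one could invoke an off‑the‑shelf parametrized Skorokhod representation — a measurable map realizing the probability kernel $(u,B)\mapsto\rho^u(B)$, in the spirit of the extension of the Skorokhod construction used after \eqref{defupsilon} and of \cite{cardel18a} — but the quantile construction above is entirely self‑contained.
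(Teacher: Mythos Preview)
Your proof is correct and follows essentially the same route as the paper: both reduce the statement to the existence of a jointly measurable Skorohod representation of the transition kernel $Q(u,\cdot)=\Lc(\Phi^u(Y^u))$, obtained via the quantile construction (with measurability in $u$ inherited from that of the kernel). The paper simply invokes this as a classical result and refers to \cite{Zab96} for the details, whereas you spell them out explicitly through the Borel isomorphism reduction to $\R$ and the generalized inverse distribution function.
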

\proofname. 
To prove this claim note that setting
$Q(u,A)= \Lc(\Phi^u(Y^u))(A)$, for $u\in U$ and any Borel set $A\subset U$, we define a transition kernel from $U$ to $S$, by the measurability assumption. It is a classical result of Skorohod (used in the proof of the Skorohod representation theorem) that there exists a Borel function $\tilde\Phi(u,\cdot):(0,1)\to S$ carrying the Lebesgue measure on $(0,1)$ to the measure $Q(u,\cdot)$ and therefore satisfying $\Lc(\tilde \Phi(u,Y))=Q(u,\cdot)
=\Lc(\Phi^u(Y^u))$. 
The function
$\tilde\Phi(u,\cdot)$ is obtained from $Q(u,\cdot)$ in a constructive way which shows that, since $Q$ is a kernel, the function $\tilde\Phi(u,y)$ is in fact Borel measurable in $(u,y)\in U\times (0,1)$: for a detailed proof see for instance the proof of Theorem 3.1.1 in \cite{Zab96}. 
\qedsymbol

\vspace{2mm}

\noindent \proofname ~of Proposition  \ref{respresentation THM}. 
  Fix a bijection  $\psi: C^\ell_{[0,t]}\times (0,1)\rightarrow (0,1)$ such that $\psi$ and $\psi^{-1}$ are Borel measurable (such a map exists 
since $C^\ell_{[0,t]}\times (0,1)$ is an uncountable Polish space: see e.g.  Corollary 7.16.1 in \cite{BertsekasShreve}). Then set $X^u=\psi(W_{[0,t]}^u,Z^u)$, for $u\in U$. Then $(X^u)_{u\in U}$ is a family of identically distributed random variables with common c.d.f. denoted by $F$ that is continuous as $\psi$ is one to one and $\Lc(W_{[0,t]}^u,Z^u)$ has no atom. In particular, the random variables $Y^u:=F(X^u)$, $u\in U$, are uniformly distributed on $[0,1]$ and $(W_{[0,t]}^u,Z^u)=\psi^{-1}(F^{-1}(Y^u))$ for $u\in U$, where $F^{-1}$ stands for the generalized inverse of $F$. Then we have
\beqs
\Lc\big(\underline{\xi}^u( W^u_{[0,t]},Z^u), W^u_{[0,t]},Z^u\big) & = & \Lc\big(\Phi^u(Y^u) \big)
\enqs
where 
 $\Phi^u(y)\in S:=\R^d\times C^\ell_{[0,t]}\times (0,1)$
 is defined as $\Phi^u(y)=\big(\underline{\xi^u}(\psi^{-1}( F^{-1} (y))),\psi^{-1}( F^{-1} (y))\big)$ for $y\in (0,1)$ and $u\in U$. Define $Y=F(\psi( W_{[0,t]},Z))$. Then, $Y$ is uniformly distributed and there exists some Borel map $\tilde \Phi:~U\times (0,1)\rightarrow S$ such that $\Lc(\Phi^u(Y^u))=\Lc(\tilde \Phi(u,Y))$ for every $u\in U$. If we denote by $\tilde \Phi_1$ the first component of $\tilde \Phi$, the map $\tilde \xi ^u= \tilde \Phi_1(u,F(\psi ( \cdot)))$ is a solution to our initial problem. 
\ep

\begin{Proposition}\label{repseqTHM}
Let $(t_n, \mu_n)_n$ be a sequence of $ [0,T]\times L^2_\lambda(\Pc_2(\R^d))$ and $(t,\mu)\in [0,T] \times L^2_\lambda(\Pc_2(\R^d))$ such that $(t_n, \mu_n)\rightarrow (t, \mu)$ as $n\rightarrow+\infty$.  There exist Borel maps $\underline{\xi}$  and  $({\underline \xi}_n)_n$  from $U\times C^\ell_{[0,t]}\times (0,1)$ to $\R^d$ s.t.  
$(\P_{\underline{\xi}^{u}(W^u_{[0,t]}, Z^u)})_u= \mu$,
$(\P_{\underline{\xi}^{n,u}(W^u_{[0,t_n]}, Z^u)})_u=\mu_n$ for all $n\geq 1$ 
and
\beqs
\int_0^T\E\Big[\big|\underline{\xi}^{n,u}(W^u_{[0,t_n]}, Z^u)-\underline{\xi}^{u}(W^u_{[0,t]}, Z^u)\big|^2
\Big]\lambda(\d u) & \xrightarrow[n\rightarrow+\infty]{} & 0\;.
\enqs 
\end{Proposition}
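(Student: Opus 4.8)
\textbf{Proof plan for Proposition \ref{repseqTHM}.}

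The plan is to build the maps $\underline\xi$ and $\underline\xi^n$ essentially by the same Skorohod-type construction used in Proposition \ref{respresentation THM}, but carried out \emph{simultaneously} for $\mu$ and the $\mu_n$'s so that the resulting random variables are close in $L^2$. First I would reduce to the case where the Brownian trajectory plays no role: as in the proof of Proposition \ref{respresentation THM}, fix Borel bijections $\psi_s:C^\ell_{[0,s]}\times(0,1)\to(0,1)$ (with Borel inverse) for $s=t$ and $s=t_n$, and use the fact that $\Lc(W^u_{[0,s]},Z^u)=\mathbb W_T\otimes m$ is non-atomic, so that each $\psi_s(W^u_{[0,s]},Z^u)$ is uniform on $(0,1)$ after composing with its (continuous) c.d.f. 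This way it suffices to produce, for every $u$, Borel maps $h^u,h^{n,u}:(0,1)\to\R^d$ pushing the Lebesgue measure forward to $\mu^u$ and $\mu_n^u$ respectively, jointly measurable in $(u,y)$, and with $\int_U\int_0^1|h^{n,u}(y)-h^u(y)|^2\,dy\,\lambda(\d u)\to 0$.

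The key step is the construction of these inverse-transport maps in a way that is both (i) jointly measurable in $u$ and (ii) $L^2$-convergent in $u$-average. For $d=1$ I would use quantile functions: set $h^u=F_{\mu^u}^{-1}$ and $h^{n,u}=F_{\mu_n^u}^{-1}$, the generalized inverses of the c.d.f.'s. Joint measurability in $(u,y)$ follows from the measurability of $u\mapsto\mu^u$ (via the criterion \eqref{meascriterion}), and the classical identity $\int_0^1|F_{\mu^u}^{-1}(y)-F_{\mu_n^u}^{-1}(y)|^2\,dy=\Wc_2(\mu^u,\mu_n^u)^2$ in dimension one turns the desired convergence into $\int_U\Wc_2(\mu^u,\mu_n^u)^2\,\lambda(\d u)=\bd(\mu,\mu_n)^2\to 0$, which is exactly the hypothesis. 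For general $d$, since there is no canonical optimal map measurable in $u$, I would instead invoke a measurable selection theorem: the set of couplings $\Gamma(\mu^u,\mu_n^u)$ depends measurably on $u$ and the optimal transport cost $\Wc_2^2$ is attained; by a Kuratowski--Ryll-Nardzewski / von Neumann measurable selection argument one obtains a Borel family $u\mapsto\pi_n^u$ of optimal couplings, and then a Borel family of disintegration kernels $\pi_n^u(\d x\,\d x')=\mu^u(\d x)\,k_n^u(x,\d x')$; feeding $k_n^u$ and the already-constructed map $h^u$ for $\mu^u$ through the Skorohod construction (on an auxiliary coordinate of $(0,1)$, using that $(0,1)\cong(0,1)^2$ Borel-isomorphically) yields $h^{n,u}$ with $\int_U\int_0^1|h^{n,u}(y)-h^u(y)|^2\,dy\,\lambda(\d u)=\int_U\Wc_2(\mu^u,\mu_n^u)^2\,\lambda(\d u)=\bd(\mu,\mu_n)^2\to 0$. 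Finally, one pulls these maps back through $\psi_t,\psi_{t_n}$ and the c.d.f. normalizations to define $\underline\xi^u(W^u_{[0,t]},Z^u)$ and $\underline\xi^{n,u}(W^u_{[0,t_n]},Z^u)$; since $t_n\to t$ and the trajectories only enter through measure-preserving bijections, the $L^2$ estimate is unaffected (one may also note that the restriction maps $C^\ell_{[0,t_n]}\to C^\ell_{[0,t]}$ and the continuity of $s\mapsto W^u_s$ make any discrepancy from $t_n\neq t$ enter only through the measurable relabelings, not through the $\R^d$-distance).

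The main obstacle is precisely the joint-in-$u$ measurable selection of (near-)optimal transport maps in dimension $d>1$: unlike the one-dimensional quantile transform, there is no closed-form choice, so one must carefully combine a measurable selection theorem for the optimal couplings with a measurable disintegration and the measurable Skorohod construction (as in the proof of Lemma \ref{respresentation LEM}, citing \cite{Zab96}), checking at each stage that the transition-kernel structure is preserved so that the final maps are Borel in $(u,y)$. Everything else — the reduction via $\psi_s$, the normalization by continuous c.d.f.'s, and the identification of the $L^2$-error with $\bd(\mu,\mu_n)^2$ — is routine given Propositions \ref{respresentation THM} and the hypothesis $\bd(\mu_n,\mu)\to 0$.
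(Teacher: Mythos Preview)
Your plan is correct and follows essentially the same route as the paper: reduce to building jointly measurable maps $h^u,h^{n,u}:(0,1)\to\R^d$ pushing Lebesgue forward to $\mu^u,\mu_n^u$; obtain a measurable family of (near-)optimal couplings between $\mu^u$ and $\mu_n^u$; disintegrate; and then feed the disintegration kernel through the measurable Skorohod construction of Lemma~\ref{respresentation LEM} (applied with $U\times\R^d$ in place of $U$), using a Borel isomorphism $(0,1)\cong(0,1)^2$ to split the uniform seed into two independent coordinates.

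Two small points where your plan is heavier than needed. First, the reduction step: you do not need the bijections $\psi_s$ or any pullback through the Brownian path, and indeed your worry that the measure-preserving relabelings at times $t$ and $t_n$ might not agree is legitimate. The simplest fix (which the paper adopts implicitly) is to ignore the Brownian trajectory entirely and just use $Z^u$ as the common uniform seed for both $\underline\xi^u$ and $\underline\xi^{n,u}$; then $\E|h^{n,u}(Z^u)-h^u(Z^u)|^2=\int_0^1|h^{n,u}-h^u|^2$ exactly, and the issue disappears. Second, the paper does not invoke Kuratowski--Ryll-Nardzewski for \emph{optimal} couplings; it takes near-optimal couplings $\pi_n^u$ satisfying $\int_U\int|x-y|^2\pi_n^u(\d x,\d y)\,\lambda(\d u)\le \bd(\mu_n,\mu)^2+1/n$, which is enough for the conclusion and sidesteps any selection subtlety. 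Your $d=1$ quantile shortcut is a pleasant observation but not needed for the general argument.
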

\proofname.
As in the proof of the Proposition \ref{respresentation THM},  we simply need to prove the following statement:

\vspace{1mm}

For any family $(Y^u)_u$ of random variables uniformly distributed on $(0,1)$ and any Polish space $S$, 
there exists Borel maps $\Phi$ and $\Phi_n$, $n\geq 1$, from $U\times (0,1)$ to $S$ such that $\Lc(\Phi(u,Y^u))=\mu^u$, $\Lc( \Phi_n(u,Y^u))=\mu^u_n$ for every  $u\in U$ 
and
\beqs
\int_0^T\E\Big[\big|\Lc(\Phi(u,Y^u))-\Lc(\Phi_n(u,Y^u))\big|^2
\Big]\lambda(\d u) & \xrightarrow[n\rightarrow+\infty]{} & 0\;.
\enqs 
Let $(\pi^n)_n$ be a sequence $L^2_\lambda(\Pc_2(\R^d\times\R^d))$ such that
$\pi^u_{n,1}=\mu^u$ and $\pi^u_{n,2}=\mu^u_n$ for all $u\in U$, and 
\beqs
\int_0^T\int_{\R^d\times\R^d} |x-y|^2\pi_n^u(\d x,\d y)  & \leq & \int_0^T\Wc_2^2(\mu_n^u,\mu^u)\lambda(\d u)+\frac{1}{n}.  
\enqs
We now disintegrate the measure 
$\pi_n$ by writing
\beqs
\pi_n^u(dx,dy) & = & \mu^u(dx)\gamma^u_n(x,\d y)
\enqs
for $n\geq1$. From Lemma \ref{respresentation LEM}, there exists a Borel function $\tilde \Phi:~U\times (0,1)\rightarrow S$ such that $\Lc(\Phi(u,Y^u))=\mu^u$ for $u\in U$. Still using Lemma \ref{respresentation LEM} with $U\times \R^d$ in place of $U$,
there exist a Borel functions  $\psi_n:~U\times S \times (0,1)\rightarrow S$ 
 and $\Lc(\psi(u,x,Y^u))=\gamma_n^u(x,.)$ for $x\in S$ and $u\in U$.  Now take a Borel map $\zeta:~(0,1)\rightarrow (0,1)\times(0,1)$ such that $\zeta=(\zeta_1,\zeta_2)\sim \Uc_{(0,1)\times(0,1)}$ and define the Borel maps $\Phi$ and $\Phi_n$ by
\beqs
\Phi(u,x) \; = \;  \tilde \Phi(u,\zeta_1(x)), & \qquad 
\Phi_n(u,x) \;  = \;  \psi_n(u,\tilde \Phi(\zeta_1(x)),\zeta_2(x))
\enqs
for $x\in (0,1)$ and $u\in U$ and $n\geq 1$. Then $\Phi$ and $\Phi_n$ are solutions to the problem.
\ep

\begin{Proposition}\label{WANTED}
    Let $\alpha\in \Ac$, $t\in [0,T]$, $(t_n)_n$ a sequence of $[0,T]$, $\xi\in \Ic_t$ and $(\xi^n)_n$ a sequence such that  $\xi^n\in \Ic_{t_n}$ for all $n\geq 1$. Suppose that  
\beqs
\big(t_n,\P_{\xi^{n,\cdot}}\big) & \xrightarrow[n\rightarrow+\infty]{} & \big(t,\P_{\xi^\cdot}\big)
\enqs
in $\R\times L^2_\lambda(\Pc_2(\R^d))$.  Let $X$ and $(X^n)$ be the respective solutions to \reff{stateeq} with initial conditions $\xi$ and $\xi^n$ at time $t$ and $t_n$ and control $\alpha$.
Then we have
\beqs
(\P_{X^{n,u}_{.\vee t_n}})_u & \xrightarrow[n\rightarrow+\infty]{} & (\P_{X^u_{.\vee t}})_u
\enqs
%
in $L^2_\lambda(\Pc_2(C^d_{[0,T]}))$.
\end{Proposition}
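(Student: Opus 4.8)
Since $b$ enters linearly and is controlled exactly like $\sigma$, I would write the argument for $b\equiv 0$, as in the proofs of Theorem~\ref{exuniq} and Proposition~\ref{stimeeqstate}. The first move is a reduction: using the representation Proposition~\ref{repseqTHM} (together with Proposition~\ref{respresentation THM}, the uniqueness in law of Proposition~\ref{EQUALAWSOLSDE}, and the law invariance of Theorem~\ref{lawinv}) one may assume that the initial data have the form $\xi^{n,u}=\underline\xi^{n,u}(W^u_{[0,t_n]},Z^u)$ and $\xi^u=\underline\xi^u(W^u_{[0,t]},Z^u)$ for Borel maps $\underline\xi^n,\underline\xi$, carried by the common family $(W^u,Z^u)_u$, with the prescribed marginal laws $(\P_{\xi^{n,u}})_u,(\P_{\xi^u})_u$, and additionally such that $\int_U\E[|\xi^{n,u}-\xi^u|^2]\,\lambda(\d u)\to0$; replacing the data by such representatives changes neither $(\P_{X^{n,u}_{\cdot\vee t_n}})_u$ nor $(\P_{X^u_{\cdot\vee t}})_u$. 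After this reduction the control process $\alpha^u_s=\alpha(u,s,W^u_{\cdot\wedge s},Z^u)$ is literally the same in both systems, so $\P_{\alpha^{n,\cdot}_s}=\P_{\alpha^{\cdot}_s}$ for every $s$. Moreover the standing hypothesis $(\P_{\xi^{n,\cdot}})_u\to(\P_{\xi^{\cdot}})_u$ in $L^2_\lambda(\Pc_2(\R^d))$ yields $\sup_n\int_U\E[|\xi^{n,u}|^2]\,\lambda(\d u)<\infty$, hence $\sup_n\int_U\E[\sup_{s\in[0,T]}|X^{n,u}_{s\vee t_n}|^2]\,\lambda(\d u)<\infty$ by the a priori bound \eqref{estimX2} of Remark~\ref{rembsig} and the admissibility bound \eqref{alphaadmissible}.

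Since both families are then defined on $(\Omega,\Fc,\P)$ and $\Wc_2$ is an infimum over couplings, it suffices to show that
\begin{equation*}
\Delta_n:=\int_U\E\Big[\sup_{s\in[0,T]}\big|X^{n,u}_{s\vee t_n}-X^u_{s\vee t}\big|^2\Big]\,\lambda(\d u)\longrightarrow 0 .
\end{equation*}
Passing if needed to the two sub-sequences $\{t_n\le t\}$ and $\{t_n>t\}$, I may assume $t_n\le t$ and decompose $[0,T]=[0,t_n]\cup[t_n,t]\cup[t,T]$. On $[0,t_n]$ both extended processes are frozen at their initial values, so this regime contributes exactly $\int_U\E[|\xi^{n,u}-\xi^u|^2]\,\lambda(\d u)\to0$. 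On $[t_n,t]$ the process $X^u$ is still frozen at $\xi^u$, while $X^{n,u}_s-\xi^{n,u}$ is a stochastic integral over $[t_n,s]$ of a coefficient which, by \eqref{bsigfort}, has at most linear growth; hence, by Doob's inequality, the It\^o isometry and the uniform moment bounds just recalled, $\int_U\E[\sup_{s\in[t_n,t]}|X^{n,u}_s-\xi^{n,u}|^2]\,\lambda(\d u)\le C(t-t_n)\to0$ for a constant $C$ independent of $n$. Consequently the discrepancy at time $t$, $\rho_n:=\int_U\E[|X^{n,u}_t-\xi^u|^2]\,\lambda(\d u)$, satisfies $\rho_n\le 2\int_U\E[|\xi^{n,u}-\xi^u|^2]\,\lambda(\d u)+2C(t-t_n)\to0$.

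On $[t,T]$ both processes run, and subtracting the two equations gives, for $s\in[t,T]$,
\begin{equation*}
X^{n,u}_s-X^u_s=\big(X^{n,u}_t-\xi^u\big)+\int_t^s\Big(\sigma\big(u,X^{n,u}_r,\alpha^u_r,\P_{X^{n,\cdot}_r},\P_{\alpha^{\cdot}_r}\big)-\sigma\big(u,X^u_r,\alpha^u_r,\P_{X^{\cdot}_r},\P_{\alpha^{\cdot}_r}\big)\Big)\,\d W^u_r .
\end{equation*}
Using Assumption~\ref{assumptionbasic}(ii), Doob's inequality and the It\^o isometry, together with $\bd(\P_{X^{n,\cdot}_r},\P_{X^{\cdot}_r})^2\le\int_U\E[|X^{n,u}_r-X^u_r|^2]\,\lambda(\d u)$ (and the fact that the second measure argument $\P_{\alpha^{\cdot}_r}$ is common to both systems), one obtains after integrating in $u$
\begin{equation*}
\int_U\E\Big[\sup_{r\in[t,s]}|X^{n,u}_r-X^u_r|^2\Big]\,\lambda(\d u)\le C\rho_n+C\int_t^s\int_U\E\Big[\sup_{q\in[t,r]}|X^{n,u}_q-X^u_q|^2\Big]\,\lambda(\d u)\,\d r ,
\end{equation*}
and Gr\"onwall's lemma bounds the left-hand side at $s=T$ by $C\rho_n e^{CT}\to0$. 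Adding the three regimes yields $\Delta_n\to 0$, which is the assertion.

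I expect the genuinely delicate point to be the reduction in the first paragraph: legitimising the passage from the given $\xi^n,\xi$ (of which only the collection of marginal laws is under control) to representatives converging in $L^2$ on a common probability space, without altering the laws appearing in the statement — this is exactly where Propositions~\ref{respresentation THM} and \ref{repseqTHM} and the law-invariance results enter. Once that is granted, the remainder is the standard stability machinery: a splitting of the time interval that absorbs the change of initial time through the linear growth of $\sigma$ on the short interval $[t_n\wedge t,t_n\vee t]$, followed by a Gr\"onwall estimate on the remaining interval, relying only on Assumption~\ref{assumptionbasic} and the a priori estimates of Proposition~\ref{stimeeqstate} and Remark~\ref{rembsig}.
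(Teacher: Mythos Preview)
Your proof is correct and follows the same overall strategy as the paper: reduce via Proposition~\ref{repseqTHM} to representatives of the initial data that converge in $L^2$, transfer the laws of the solutions via Proposition~\ref{EQUALAWSOLSDE}, and conclude by a stability estimate. There is one genuine structural difference worth noting.

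The paper carries out the reduction with a \emph{single} Brownian motion $W$ and a single uniform variable $Z$, common to all $u\in U$: it evaluates the Borel maps $\underline\xi^u,\underline\xi^{n,u}$ from Proposition~\ref{repseqTHM} at $(W_{[0,t]},Z)$ and $(W_{[0,t_n]},Z)$, and solves the corresponding systems $(\tilde X^u)_u,(\tilde X^{n,u})_u$ all driven by this common $W$. Proposition~\ref{EQUALAWSOLSDE} (with $\tilde W^u\equiv W$, $\tilde Z^u\equiv Z$) then identifies $\Lc(\tilde X^u)=\Lc(X^u)$ and $\Lc(\tilde X^{n,u})=\Lc(X^{n,u})$. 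This device makes all joint measurability in $u$ automatic (everything is a Borel function of $(u,W,Z)$), so that the final step can be dismissed as ``classical estimates on diffusion processes''. You instead keep the original family $(W^u,Z^u)_u$; this is equally valid, but it does require the observation --- which you leave implicit --- that $u\mapsto \Lc(X^{n,u},X^u,W^u,Z^u)$ is Borel, a consequence of the Borel-ness of $\underline\xi^n,\underline\xi$ and the argument used for \eqref{jointonxixibar} in Proposition~\ref{stimeeqstate}.

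Conversely, you spell out the stability argument in full (the three-regime splitting and Gr\"onwall), whereas the paper leaves this to the reader. Your treatment of the short interval $[t_n\wedge t,\,t_n\vee t]$ and the subsequent Gr\"onwall on $[t\vee t_n,T]$ is exactly what is hidden behind the paper's ``classical estimates'', and is correct as written.
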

\proofname. We take $W$ a $\R^d$ valued brownian motion and $Z$ an independent $(0,1)$-uniformly distributed random variable. From Proposition \ref{repseqTHM}, there are Borel maps $\underline{\xi}$  and  $({\underline \xi}_n)_n$  from $U\times 
C^\ell_{[0,t]}\times (0,1)
$ to $\R^d$ 
such that 
$(\P_{\underline{\xi}^{u}(W_{[0,t]}, Z)})_u= (\P_{{\xi}^{u}})_u$,
$(\P_{\underline{\xi}^{n,u}(W_{[0,t_n]}, Z)})_u=(\P_{{\xi}^{n,u}})_u$ for all $n\geq 1$ 
and
\beq\label{condIniConvL2}
\int_0^T\E\Big[\big|\underline{\xi}^{n,u}(W_{[0,t_n]}, Z)-\underline{\xi}^{u}(W_{[0,t]}, Z)\big|^2
\Big]\lambda(\d u) & \xrightarrow[n\rightarrow+\infty]{} & 0\;.
\enq
 We define $(\tilde{X}^u)_u$ as the unique solution to the SDE
 \begin{equation}
     \left\{
\begin{array}{rcl}
    d \tilde{X}^u_s & = & b\left(u,\tilde{X}^u_s, \tilde{\alpha}^u_s,  \P_{\tilde{X}^{\cdot}_s} , \P_{\tilde{\alpha}^{\cdot}_s}  \right)  \d s\\
     & & + \sigma\left(u,\tilde{X}^u_s,\tilde{\alpha}^u_s, \P_{\tilde{X}^{\cdot}_s}, \P_{\tilde{\alpha}^{\cdot}_s}  \right)\d {W}_s,
    \;\; s\in [t,T],
    \\
    \tilde{X}_t^u & = & \underline{\xi}^{u}(W_{[0,t]}, Z),    \;\; u\in U,
\end{array}\right.
 \end{equation}
 and $(\tilde{X}^{n,u})_u$ as the unique solution to the SDE
 \begin{equation}
     \left\{
\begin{array}{rcl}
    d \tilde{X}^{n,u}_s & = & b\left(u,\tilde{X}^{n,u}_s, \tilde{\alpha}^u_s, \big( \P_{\tilde{X}^{n,v}_s} \big)_{v}, \big( \P_{\tilde{\alpha}^{n,v}_s} \big)_{v}\right)\d s\\
     & & + \sigma\left(u,\tilde{X}^{n,u}_s,\tilde{\alpha}^u_s, \big( \P_{\tilde{X}^{n,v}_s} \big)_{v},\big( \P_{\tilde{\alpha}^{v}_s} \big)_{v}\right)\d {W}_s,
    \;\; s\in [t,T],
    \\
    \tilde{X}_{t_n}^{n,u} & = & \underline{\xi}^{n,u}(W_{[0,t_n]}, Z),    \;\; u\in U,
\end{array}\right.
 \end{equation}
 where $\tilde \alpha$ is defined by
 \beqs
\tilde{\alpha}_t^u=\tilde{\alpha}(u,t, W_{\cdot\wedge t},Z),
\quad t\in [0,T],\,u\in U.
\enqs
Using Proposition \ref{EQUALAWSOLSDE} with $\tilde{W}^u=W$ and $\tilde{Z}^u=Z$ for $u\in U$, we get
\beqs
\Lc(X^u,W^u,Z^u) & = & \Lc(\tilde X^u, W,Z) 
\enqs
and 
\beqs
\Lc(X^{n,u},W^u,Z^u) & = & \Lc(\tilde X^{n,u},W,Z) 
\enqs
for all $n\geq 1$ and $u\in U$. Now, using \reff{condIniConvL2} we get from classical estimates on diffusion processes that
\beqs
\int_0^T\E\Big[\sup_{s\in[t\vee t_n,T]}\big|\tilde{X}^{n,u}_s-\tilde{X}^{u}_s\big|^2
\Big]\lambda(\d u) & \xrightarrow[n\rightarrow+\infty]{} & 0\;,
\enqs
which gives the result.
\ep 

\bigskip

\noindent {\bf Acknowledgments.}  Marco Fuhrman wishes to thank Huy\^en Pham for the invitation  to Universit\'e Paris Cit\'e where this work began.

\bibliographystyle{plain}
\bibliography{biblio}

\end{document}